\definecolor{crimson}{rgb}{0.86, 0.08, 0.24}
\definecolor{bleudefrance}{rgb}{0.14, 0.4, 0.9}
\theoremstyle{plain}
\numberwithin{equation}{section} 
\newtheorem{theorem}{Theorem}[section]
\newtheorem{proposition}[theorem]{Proposition}
\newtheorem{lemma}[theorem]{Lemma}
\newtheorem{corollary}[theorem]{Corollary}
\newtheorem{definition}[theorem]{Definition}
\newtheorem{claim}[theorem]{Claim}
\newtheorem{conjecture}[theorem]{Conjecture}
\theoremstyle{remark}
\newtheorem{remark}[theorem]{Remark}
\newtheorem{example}[theorem]{Example}
\renewcommand{\leq}{\leqslant}
\renewcommand{\geq}{\geqslant}
\newsavebox{\proofbox}
\savebox{\proofbox}{\begin{picture}(7,7)  \put(0,0){\framebox(7,7){}}\end{picture}}
\newcommand\Z{\mathbb{Z}}
\newcommand\R{\mathbb{R}}
\newcommand\N{\mathbb{N}}
\newcommand\SL{\operatorname{SL}}
\newcommand\SO{\operatorname{SO}}
\newcommand\PSL{\operatorname{PSL}}
\newcommand\im{\operatorname{im}}
\newcommand\Co{\operatorname{Co}}
\newcommand\GL{\operatorname{GL}}
\newcommand\PGL{\operatorname{PGL}}
\newcommand\erg{\operatorname{erg}}
\newcommand\Supp{\operatorname{Supp}}
\newcommand\Hom{\operatorname{Hom}}
\newcommand\Endo{\operatorname{End}}
\newcommand\diag{\operatorname{diag}}
\renewcommand\P{\mathbb{P}}
\newcommand\calG{\mathcal{G}}
\newcommand\calC{\mathcal{C}}
\newcommand\eps{\varepsilon}
\newcommand\id{\operatorname{id}}
\newcommand{\bbS}{\mathbb{S}^1}
\newcommand{\sg}{\mathrm{sg}}
\newcommand{\z}{w}
\newcommand\acts{\operatorname{\curvearrowright}}
\newcommand{\red}[1]{{\color{red}#1}}
\newlength{\dhatheight}
\begin{document}

\title[Stationary measures on homogeneous bundles over flag varieties]{Stationary measures for $\SL_2(\R)$-actions on homogeneous bundles over flag varieties}

\author{Alexander Gorodnik}
\address{Institut f\"{u}r Mathematik, Universit\"{a}t Z\"{u}rich,  8057 Z\"{u}rich, Switzerland}
\email{alexander.gorodnik@math.uzh.ch, lijialun36@gmail.com, sertcagri@gmail.com}
\thanks{}

\author{Jialun Li}
\email{}
\thanks{}

\author{Cagri Sert}
\email{}
\thanks{
A.G. and J.L. were supported by the SNF grant 200021--182089; C.S. was supported by SNF Ambizione grant 193481}

\begin{abstract}
Let $G$ be a real semisimple Lie group with finite centre and without compact factors, $Q<G$ a parabolic subgroup and $X$ a homogeneous space of $G$ admitting an equivariant projection on the flag variety $G/Q$ with fibres given by copies of lattice quotients of a semisimple factor of $Q$.  
Given a probability measure $\mu$, Zariski-dense in a copy of $H=\SL_2(\R)$ in $G$, we give a description of $\mu$-stationary probability measures on $X$ and prove corresponding equidistribution results. Contrary to the results of Benoist--Quint corresponding to the case $G=Q$, the type of stationary measures that $\mu$ admits depends strongly on the position of $H$ relative to $Q$. We describe possible cases and treat all but one of them, among others using ideas from the works of Eskin--Mirzakhani and Eskin--Lindenstrauss.
\end{abstract}

\maketitle


\section{Introduction}

Let $G$ be a real Lie group and $R<G$ a closed subgroup. The actions of subgroups of $G$ on the homogeneous space $X=G/R$ constitute a natural class of dynamical systems whose (topological, statistical etc.) properties are of key relevance to various problems in mathematics. Accordingly, the study of such dynamical systems has a rich history; it has prompted the introduction of various new techniques and contains major results. The nature of these systems varies according to the acting subgroup and the group $R$ to be factored out, ranging over classes such as partially hyperbolic, parabolic, proximal dynamics, etc. 

One type of ambient homogeneous space $X$ is obtained by considering quotients by discrete subgroups $R=\Lambda<G$. For the actions of connected subgroups of $G$ generated by unipotents (e.g.~ semisimple subgroups) on these quotients $X$, settling conjectures of Raghunathan and Dani, definitive results were obtained by Ratner \cite{ratner.measure.class,ratner.topological}. Her results can be considered as a vast generalization of classical results on vector flows on the tori $\mathbb{T}^d$ and have far-reaching consequences. A key step/result in Ratner's works --- corresponding to Dani's conjecture --- is the classification of measures invariant under unipotent flows. To obtain this result, Ratner introduced an important technique, the polynomial drift argument. 

Setting aside the actions of commuting diagonal flows, a next major step for actions on quotients $X$ by lattices $\Lambda<G$ is reached by the seminal work of Benoist--Quint \cite{BQ1,bq.non-escape,BQ2,BQ3}. Their work involved describing dynamics of actions by  subgroups $\Gamma$ whose algebraic (Zariski) closure has semisimplicity but the subgroups themselves can be genuinely irregular, e.g.~ discrete. 
Focusing on stationary measures of random walks on quotients $X$, they developed the exponential drift argument used to obtain a description of all stationary measures. The drift argument of Benoist--Quint requires a \textit{precise control of random matrix products} (e.g.~ local limit theorem), a feature not readily available without a semisimplicity assumption. Benoist--Quint's ideas were then remarkably modified in a non-homogeneous setting by Eskin--Mirzakhani \cite{eskin-mirzakhani} who managed to set up a much more flexible argument bypassing for example the need for a local limit theorem. This development enabled further extensions of Benoist--Quint's results in several directions by Eskin--Lindenstrauss \cite{eskin-lindenstraus.short,eskin-lindenstrauss.long}. Some of our arguments in this article (e.g.~ the 
six points drift argument) draws on the ideas of the latter works \cite{eskin-mirzakhani,eskin-lindenstraus.short} and in fact can be seen as a slightly modified and simpler version of them.

Continuing to expound elements of our setting, a second type of homogeneous space is obtained by considering quotients by parabolic subgroups $R=Q<G$, giving rise to flag varieties $X=G/Q$. The dynamics on these quotients are quite different from those on quotients by discrete subgroups; in particular, when the acting group has semisimple (non-compact) Zariski-closure, the action is proximal (if the group is split) and the space supports no invariant measures. Starting with the pioneering works of Furstenberg \cite{furstenberg.poisson, furstenberg.nc, furstenberg.boundary.theory} on random matrix products and boundary theory, a thorough qualitative description of dynamics is established by Guivarc'h--Raugi \cite{guivarch-raugi.isom.ext} and Benoist--Quint \cite{BQ.compositio}.

The homogeneous space $X=G/R$ considered in this article is a combination of the two types of classical homogeneous spaces discussed above; it has the structure of a fibre bundle over the flag variety $G/Q$ with fibres given by a homogeneous space $S/\Lambda$, where $S$ is a semisimple group and $\Lambda$ is a discrete subgroup. To illustrate and motivate this structure, recall that a standard example for the first kind of spaces (obtained as quotients by discrete subgroups) is provided by the space $X_{d,d}$ of rank-$d$ lattices in $\R^d$ up to homothety; which can be identified with $\PGL_d(\R)/\PGL_d(\Z)$. Now when one considers more generally the space $X_{k,d}$ of rank-$k$ lattices in $\R^d$ up to homothety, if $k \neq d$, then $X_{k,d}$ has a natural structure of a bundle over the space of $k$-Grassmannians in $\R^d$ (which is a standard example of a space realized as a quotient by a parabolic subgroup) with fibres given by copies of $\PGL_k(\R)/\PGL_k(\Z)$.

The study of dynamics on these quotients is initiated by the work of Sargent--Shapira \cite{sargent-shapira}. Generalizing arguments of Benoist--Quint \cite{BQ1,bq.non-escape}, they managed to describe the dynamics on the space $X_{2,3}$ when the acting probability measure is Zariski-dense in $\SL_3(\R)$ or in an irreducible copy of $\PGL_2(\R)$. Remarkably, they discovered\footnote{interestingly, with a computer experiment} a somewhat unexpected phenomenon (a $\Gamma_\mu$-invariant section, see \cite{sargent-shapira}) in the latter case, a precise understanding of which was an initial motivation for our work. The goal of the current article is more generally to obtain measure classification and equidistribution results in all possible situations\footnote{We manage this except in one case, Case 2.3.b, see Figure \ref{figure.cases} and the discussion below.} when the acting probability measure is Zariski-dense in a copy of $\SL_2(\R)$ or $\PGL_2(\R)$ and when the ambient space $G/R$ has minimal assumptions.

Among others, the results of our work show that in contrast to the type of results obtained by Benoist--Quint, a variety of various dynamical situations are possible even for the actions of groups such as $\SL_2(\R)$ (see Figure \ref{figure.cases}). Moreover, for some of these cases the exponential drift argument of Benoist--Quint is not applicable as such and indeed we develop a different drift argument inspired from those of Eskin--Mirzakhani \cite{eskin-mirzakhani} and Eskin--Lindenstrauss \cite{eskin-lindenstraus.short}. Alternatively, we also demonstrate that a precise control on random matrix products (such as a uniform renewal theorem) can also be used to obtain measure classification. Even though the actions on fibres and base individually are well-understood in by-now classical works, the description of dynamics on these homogeneous spaces for a general acting group remains a challenge.

\bigskip
		
\begin{center}
* 
\nopagebreak
*    *
\end{center}

We now proceed with introducing the notation needed to state our results. In the sequel, the meaning of the following groups, spaces, measures etc.~ will be fixed unless otherwise stated. Let $G$ be a semisimple real Lie group with finite centre and $Q<G$ a parabolic subgroup. Let $R_0 \unlhd Q$ be a normal algebraic subgroup and $R<Q$ be a closed subgroup containing $R_0$ such that $S:=Q/R_0$ is semisimple with finite centre and without compact factors and $\Lambda:=R/R_0$ is a discrete subgroup of $S$. We denote by $X$ the quotient space $G/R$ which will serve as the ambient space. A guiding example is provided by the homothety classes of rank-$k$ lattices in $\R^d$, see Example \ref{ex.ss} for a detailed description of these groups in that case.

All probability measures considered in this article will be Borel probability measures. We will denote by $\mu$ a probability measure on $G$. A measure $\mu$ on $G$ is said to have finite first moment if for a (equivalently any) irreducible finite-dimensional faithful linear representation $V$ of $G$, we have $\int \log \|g\|d\mu(g)<\infty$, where $\|.\|$ any choice of an operator norm on $\Endo(V)$. The group generated by the support of $\mu$ will be denoted $\Gamma_\mu$ and $H$ will denote the Zariski-closure of $\Gamma_\mu$ --- we will simply say that $\mu$ is Zariski-dense in $H$. Recall that a measure $\nu$ on $X$ is said to be $\mu$-stationary if it satisfies $\mu \ast \nu=\int g_\ast \nu d\mu(g)=\nu$, where $g_\ast \nu$ is the pushforward of $\nu$ by $g \in G$. By a stationary measure, we will understand a stationary probability measure. A $\mu$-stationary measure is said to be ergodic if it is an extremal point in the compact convex set $P_\mu(X)$ of $\mu$-stationary measures on $X$. Finally,  we will always suppose that $H$ is isomorphic to either $\SL_2(\R)$ or $\PGL_2(\R)$. The intersection of $H$ and the parabolic group $Q$ will be denoted $Q_H$.

\begin{wrapfigure}{r}{4cm}\label{figure.bundle}
 \begin{tikzpicture}

\coordinate[label = above :{${}$}] (0) at (-0.7,0);

\coordinate[label = above :{$X\simeq G/R$}] (1) at (1, 1.8);
    
    \coordinate (2) at (1, 1.8);
    
    \coordinate (3) at (1, 0);
    
    \coordinate[label = below :{$G/Q$}] (4) at (1, 0);
    
    \coordinate[label = right :{$\simeq S/\Lambda$}] (5) at (1.2, 1);

    \draw[->] (2) -- (3);
    
\end{tikzpicture}
\end{wrapfigure}

Before proceeding, in order to conceptually expose our results, we discuss the fibre bundle structure and various possible situations that arise; see the guiding Figure \ref{figure.cases}. Since the factored-out subgroup $R$ is contained in the parabolic $Q$, the space $X$ has a natural $G$-equivariant projection $\pi$ onto the flag variety $G/Q$. The fibres of $\pi$ are copies of the quotient $Q/R$, and by construction, we have $Q/R \simeq (Q/R_0)/(R/R_0)=S/\Lambda$. Since this projection is $G$, and hence $\Gamma_\mu$-equivariant, any $\mu$-stationary measure $\nu$ on $X$ projects down to a $\mu$-stationary measure $\overline{\nu}:=\pi_\ast \nu$ on $G/Q$. It follows that a first rough classification of stationary measures is provided by the classification in the base $G/Q$. Thanks to the results of Guivarc'h--Raugi \cite{guivarch-raugi} and Benoist--Quint \cite{BQ.compositio} (see \S \ref{subsub.stat.meas.base}), there are two types of projections giving rise to \textbf{Case 1} and \textbf{Case 2}, respectively, Dirac measures and Furstenberg measures on the base. In Case 1, the works of Benoist--Quint \cite{BQ2,BQ3} and Eskin--Lindenstrauss \cite{eskin-lindenstraus.short,eskin-lindenstrauss.long} directly apply and hence we will not comment on it further here (see \S \ref{subsub.dirac.base}).

If a stationary measure $\nu$ is in Case 2, up to replacing $Q$ by a conjugate, $Q_H$ is a parabolic subgroup of $H$ and the projection $\overline{\nu}$ of $\nu$ is the Furstenberg measure on $\calC:=H/Q_H$ in $G/Q$ (see \S \ref{subsub.furstenberg.base}). We will denote this projection as $\overline{\nu}_F$. In this case, the group $H$ preserves a subbundle of $X$, namely $\pi^{-1}(H/Q_H)$ which we will denote as $X_\mathcal{C}$ for brevity. 


\begin{figure}[H]\label{figure.cases}
 \begin{tikzpicture}

    \coordinate (O) at (0,0);

    \coordinate (1) at (-5,0);
    
    \coordinate[label = right : \small{\textbf{Case 1:} Trivial base $Q_H=H \longrightarrow$ Benoist--Quint, Eskin--Lindenstrauss}] (2) at (-4, 2.2);
    
    \coordinate (3) at (-2.5, 0);
  
    \coordinate[label = {\small$\underset{\text{\small{$Q_H<H$  parabolic}}}{\text{\textbf{Case 2:}}}$}] (10) at (-3.25, -1);
    
    \coordinate[label = right: {\small \textbf{Case 2.1:} $Q_H^o<R_0$: (Decomposable) Trivial fibre action: Prop.~ \ref{prop.trivial.fibre.measure.class}.}] (4) at (-0.5, 1.3);
    \coordinate[label = right: {\small \textbf{Case 2.2:} $Q_H^o \cap R_0 =R_u(Q_H^o)$: Diagonal fibre action: Theorem \ref{thm.measure.class.geod}.}] (5) at (-0.5, 0.35);
    
    \coordinate[label = right:] (6) at (-0.5, -1.7);
    \coordinate[label = right: {\small  $\underset{\text{\small Theorem \ref{thm.irreducible.H.decompsable}.}}{\text{\textbf{Case 2.3.a:} Irreducible $H$ $\longrightarrow$ Decomposable action }}$}] (7) at (2, -0.8);
    \coordinate[label = right: \small {\textbf{Case 2.3.b:} Example \ref{ex.to.be.treated}.}] (8) at (2, -2.2);
    
    \coordinate[label = {\small $\underset{\text{\small{$Q_H^o \cap R_0 =\{\id\}$}}}{\text{\textbf{Case 2.3:}}}$}] (12) at (-1.5, -2.5);


    \draw[-{Latex[length=2.7mm, width=1.3mm]}] (1) -- (2);
    \draw[-{Latex[length=2.7mm, width=1.3mm]}] (1) -- (3);
    \draw[-{Latex[length=2.7mm, width=1.3mm]}] (3) -- (4);
    \draw[-{Latex[length=2.7mm, width=1.3mm]}] (3) -- (5);
    \draw[-{Latex[length=2.7mm, width=1.3mm]}] (3) -- (6);
    \draw[-{Latex[length=2.7mm, width=1.3mm]}] (6) -- (7);
    \draw[-{Latex[length=2.7mm, width=1.3mm]}] (6) -- (8);

    \node at (1)[circle,fill,inner sep=1.2pt]{};
    \node at (2)[circle,fill,inner sep=1.2pt]{};
    \node at (3)[circle,fill,inner sep=1.2pt]{};
    \node at (4)[circle,fill,inner sep=1.2pt]{};
    \node at (5)[circle,fill,inner sep=1.2pt]{};
    \node at (6)[circle,fill,inner sep=1.2pt]{};
    \node at (7)[circle,fill,inner sep=1.2pt]{};
    \node at (8)[circle,fill,inner sep=1.2pt]{};

\end{tikzpicture}
\caption{List of all possible cases for $H \acts X_\mathcal{C}$}
\end{figure}

\vspace{-0.35cm}

A convenient way (which we will follow) to read the stationary measures and the action on $X_\mathcal{C}$ is to choose natural Borel trivializations of the bundle $X_\mathcal{C}$. By working with a class of trivializations (those induced by sections $G/Q \to G$) which we call standard trivializations (see \S \ref{sec.prelim}), we will consider the identifications $X \simeq G/Q \times S/\Lambda$ and $X_\mathcal{C} \simeq H/Q_H \times S/\Lambda$ where the latter identification is made equivariant by $H$-acting on the right-hand-side via a cocycle $\alpha: H \times H/Q_H \to S$.

Now, according to the algebraic relations between $Q_H$ and the group $R_0 \unlhd Q$, we distinguish three (exhaustive) possibilities giving rise to different dynamics on fibres via the cocycle $\alpha$. \textbf{Case 2.1} is the case when $Q_H^o<R_0$. In this situation, we have trivial dynamics on the fibre and every stationary measure on $X$ is a copy of the Furstenberg measure (see \S \ref{subsec.case.2.1}). \textbf{Case 2.2} is when $Q^o_H \cap R_0$ is neither $Q^o_H$ nor $\{\id\}$. Since $R_0$ is a normal subgroup of $Q$, the intersection $Q^o_H\cap R_0$ must be the unipotent radical $R_u(Q_H^o)$. In this situation, up to a judicious choice of trivialization, the cocycle $\alpha$ takes values in a rank-one diagonal subgroup of $S$ and we obtain a classification of stationary measures (Theorem \ref{thm.measure.class.geod}) as product measures on $H/Q_H \times S/\Lambda$ in the second factor invariant under diagonal flow. This is the result for which we develop our drift argument and also give an alternative proof, under a stronger moment assumption, using a uniform quantitative renewal theorem for random matrix products. Finally, the remaining \textbf{Case 2.3} occurs when $Q_H^o \cap R_0=\{\id\}$. In this case we restrict the ambient group $G$ to be $\SL_n(\R)$ or $\PGL_n(\R)$. When the associated linear or projective $H$-action is irreducible (\textbf{Case 2.3.a}), we prove that the cocycle $\alpha$ comes from an algebraic morphism $H \to S$ (what we call a decomposable action, see \S \ref{subsub.bundle.dec}) allowing us to reduce the analysis to the work of Benoist--Quint and Eskin--Lindenstrauss again (Theorem \ref{thm.irreducible.H.decompsable}). Our result for this case allows an interpretation of the aforementioned phenomenon appearing in the work of Sargent--Shapira \cite{sargent-shapira} and generalizes it. Finally,  \textbf{Case 2.3.b} occurs when $H$ is reducible. The description of dynamics in this case remains open, we provide a conjecture (Conjecture \ref{conjecture}) expressing our expectation.

\bigskip

We now state our measure classification results in Case 2.2 and Case 2.3.a followed by the corresponding equidistribution results.


\begin{theorem}[Case 2.2:~Diagonal flow invariance and product structure]\label{thm.measure.class.geod}
Let the space $X$ and groups $G,Q,R_0,R,S\simeq Q/R_0,\Lambda\simeq R/R_0$ and $H$ be as defined above. Suppose that $Q_H<H$ is a parabolic subgroup and $Q_H^o \cap R_0=R_u(Q_H^o)$. Let $\mu$ be a Zariski-dense probability measure on $H$ with finite first moment. Then, there exist a standard trivialization $X_\mathcal{C} \simeq H/Q_H \times S/\Lambda$ and a one-dimensional connected diagonal subgroup $D$ of $S$ satisfying the following. Let $\nu$ be a $\mu$-stationary and ergodic probability measure on $X_\calC$. Then, there exists a $D$-invariant and ergodic probability measure $\tilde{\nu}$ on $S/\Lambda$ such that we have $\nu=\overline{\nu}_F\otimes \tilde{\nu}$.
\end{theorem}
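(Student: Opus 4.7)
I would organize the argument in four steps.

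\emph{Step 1 (standard trivialization and identification of $D$).} Fix a Borel section $\sigma : G/Q \to G$ yielding the standard trivialization $X_\mathcal{C} \simeq H/Q_H \times S/\Lambda$ with cocycle $\alpha : H \times H/Q_H \to S$. The stabilizer of the origin $o = [e] \in H/Q_H$ is $Q_H$, and $q \mapsto \alpha(q, o)$ is a Lie group homomorphism $Q_H \to S$ whose kernel equals $Q_H \cap R_0$. By the hypothesis $Q_H^o \cap R_0 = R_u(Q_H^o)$, its restriction to $Q_H^o$ has kernel exactly the unipotent radical; hence the image is a one-parameter semisimple subgroup of $S$. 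Up to replacing $\sigma$ by its conjugate by a suitable element of $Q$, this image may be arranged to be a one-dimensional connected diagonal subgroup $D \leq S$; this is the subgroup $D$ appearing in the statement.

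\emph{Step 2 (disintegration over the base).} Let $\nu$ be a $\mu$-stationary and ergodic probability measure on $X_\mathcal{C}$. Since $\mu$ is Zariski-dense in $H$ with finite first moment, Guivarc'h--Raugi/Benoist--Quint gives $\pi_\ast \nu = \overline{\nu}_F$. Disintegrate
\[
\nu = \int_{H/Q_H} \delta_y \otimes \nu_y \, d\overline{\nu}_F(y).
\]
It will suffice to prove (i) $\nu_y$ is $D$-invariant for $\overline{\nu}_F$-a.e. $y$, and (ii) $y \mapsto \nu_y$ is $\overline{\nu}_F$-essentially constant.

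\emph{Step 3 (conditional $D$-invariance via the drift argument; the main obstacle).} This is the heart of the proof, and the step I expect to be the principal difficulty. I would follow the six points drift argument announced in the introduction, modeled on Eskin--Mirzakhani and Eskin--Lindenstrauss. The scheme: assuming for contradiction that $(\nu_y)$ is not essentially $D$-invariant, pick on a positive-measure set pairs of typical points $(y, z_1), (y, z_2)$ in a common fiber whose displacement is small and transverse to $D$; evolve them along shared random walk trajectories. The exponential boundary contraction of the walk on $H/Q_H$ toward the attracting direction of $Q_H$, together with the top-Lyapunov-rate expansion of the $D$-part of the iterated fiber cocycle, allows one to synchronize six suitably chosen orbits at a common prescribed scale. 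Passing to a weak limit yields a non-trivial translate of $\nu$ in a direction compatible with $D$, contradicting the assumed failure of $D$-invariance. The principal technical inputs I foresee are: a Margulis-type non-escape function on $X_\mathcal{C}$ controlling escape of mass in the non-compact fiber $S/\Lambda$ uniformly along perturbed trajectories; uniform control of the transverse oscillations of the fiber cocycle along typical random walk paths; and a careful choice of stopping times balancing the base contraction rate against the fiber expansion rate.

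\emph{Step 4 (product structure and ergodicity).} Granting (i), disintegrate the identity $\mu \ast \nu = \nu$: this produces a cocycle equation for the measurable map $y \mapsto \nu_y$ taking values in the space of $D$-invariant probability measures on $S/\Lambda$. Combined with the fact that $\alpha(h, y)$ modulo $D$ is determined measurable-equivariantly by the boundary pair $(h, hy)$, this map becomes an $H$-equivariant measurable map from the $\mu$-boundary $(H/Q_H, \overline{\nu}_F)$ into a space on which the boundary action is trivial, hence is essentially constant. Setting $\tilde\nu$ equal to this common value gives $\nu = \overline{\nu}_F \otimes \tilde\nu$. Finally, $D$-ergodicity of $\tilde\nu$ follows from ergodicity of $\nu$, for any non-trivial $D$-invariant decomposition of $\tilde\nu$ would lift to a non-trivial $\mu$-stationary decomposition of $\nu$.
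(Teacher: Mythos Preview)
Your Steps 1 and 2 are broadly correct, though the paper works not with the static disintegration $\nu = \int \delta_y \otimes \nu_y\, d\overline{\nu}_F$ over $H/Q_H$ but with the Furstenberg limit measures $\nu_w$ indexed by pasts $w^- \in H^{-\N^\ast}$, satisfying the equivariance $\nu_{ba_0} = \alpha(a_0,\xi(b))\nu_b$. This is important: it is these $\nu_w$ that are fed into the drift argument and whose $D$-invariance is what one actually proves (the reduction to ``$D$-invariance of $\nu_w$ for a.e.\ $w$ implies the product structure'' is a separate step, handled via shift-ergodicity --- with a sign-extended shift $T^{\mathrm{sg}}$ when $\alpha$ takes values in $D^\pm$ rather than $D$, a subtlety your Step 4 does not address).

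The real gap is Step 3. What you describe --- pick two nearby points $(y,z_1),(y,z_2)$ in a common fiber, evolve, and use ``expansion of the $D$-part of the fiber cocycle'' to produce a small nontrivial translate --- is a Benoist--Quint-style exponential drift, and it does not produce $D$-invariance here. The cocycle $\alpha$ takes values in the abelian group $D^\pm$, so if the initial displacement is along $D$ it is preserved verbatim (no drift), while if it is transverse to $D$ its evolution under $\mathrm{Ad}(D)$ limits onto a horospherical direction of $D$ in $S$, not onto $D$ itself. The paper in fact emphasizes that there is no usable tangential expansion in this case.

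The actual mechanism is different and exploits the \emph{base-dependence} of $\alpha$. One fixes two futures $a,a'$ and two pasts $b,b'$; since $\nu_{(b,a)}=\nu_{(b,a')}$ (same past), evolving along $a$ to time $n$ and along $a'$ to a matched time $m$ (chosen via dynamically defined norms so that the two cocycle values have comparable size) gives $\nu_{T^{m}(b,a')} = D_\ell\,\nu_{T^{n}(b,a)}$ with $D_\ell=\alpha(a'^m,\xi(b))\alpha(a^n,\xi(b))^{-1}$ bounded in $D^\pm$. Doing the same with $b'$ and arranging simultaneous recurrence of all four shifted words to a good compact set, one passes to limits $\hat w,\hat w'$ and obtains $\nu_{\hat w}=s(b,a,a')^{-1}s(b',a,a')\,\nu_{\hat w}$. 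The cross-ratio $s(b,a,a')^{-1}s(b',a,a')$ is computed explicitly via $\log\frac{|\varphi_{a'}(v_{b'})\varphi_a(v_b)|}{|\varphi_{a'}(v_b)\varphi_a(v_{b'})|}$ and can be made nonzero and arbitrarily small by choosing $v_b,v_{b'}$ close in projective space (using non-atomicity of the Furstenberg measure) while $\varphi_a,\varphi_{a'}$ are non-collinear. This is the ``six points'' of the figure: $\nu_b,\nu_{b'}$ and their four images under $a^n,(a')^m$. The invariance is then propagated from $\hat w$ to a.e.\ $w$ by commutativity and shift-ergodicity.

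Two further remarks: no Margulis-type non-escape function is needed, since the $\nu_w$ are already probability measures; and the ``stopping times balancing base contraction against fiber expansion'' you mention are replaced by the simpler device of stopping when the dynamically defined norm $\lambda_1(w,n)$ first exceeds a level $t$, which synchronizes the $D$-size of the cocycle across the four words up to a bounded error.
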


The hypotheses of this result entail a lack of expansion on the fibres whose existence is a key feature exploited in  Benoist--Quint's exponential drift argument. Instead, we adapt a drift argument inspired by the works of Eskin--Mirzakhani \cite{eskin-mirzakhani} and Eskin--Lindenstrauss \cite{eskin-lindenstraus.short} that exploits the interaction between different fibres. The commutativity of the target group of the cocycle considerably simplifies the steps compared to the previous works \cite{eskin-mirzakhani,eskin-lindenstraus.short}. Moreover, if $\mu$ is supposed to have a finite exponential moment, taking advantage of the special setting, we give an alternative proof using uniform quantitative renewal theorem due to Li \cite{jialun.ens} and Li--Sahlsten \cite{jialun.advances}. We defer any further comments to the more detailed discussion below in \S \ref{subsec.intro.proofs}.

\begin{remark}\label{rk.conversely.to.diagonal.thm}
The converse to Theorem \ref{thm.measure.class.geod} is also true in the sense that when $Q_H<H$ is a parabolic subgroup such that $Q_H^o \cap R_0=R_u(Q_H^o)$, there exists a standard trivialization $X_\mathcal{C} \simeq H/Q_H \times S/\Lambda$ and  an index-two extension $D^{\pm}$ of $D$ such that for any $D^\pm$-invariant probability $\tilde{\nu}$, the measure $\overline{\nu}_F \otimes \tilde{\nu}$ is $\mu$-stationary probability measure on $X$.
\end{remark}

We now continue with Case 2.3.a. We first introduce the following definition to state our result.

\begin{definition}\label{def.decomposable}
An $H$-homogeneous subbundle $X_\mathcal{C}$ of $X$ is said to be \textbf{decomposable} 
if $X_\mathcal{C}$ is isomorphic as $H$-space to $\mathcal{C} \times S/\Lambda$,
where the latter is endowed with the $H$-action $h(c,f)=(hc,\rho(h)f)$ and $\rho:H \to S$ is a morphism extending $Q_H \hookrightarrow Q \twoheadrightarrow S
$.
\end{definition}

Our next measure classification result\footnote{After we have obtained our results, we have been informed by Uri Shapira that in a sequel work to \cite{sargent-shapira} together with Uri Bader and Oliver Sargent, for the classification of stationary measures, they independently obtain the same result (and also introduce a similar notion as in Definition \ref{def.decomposable}). Our proof ideas for this case seem to be similar. They also obtain equidistribution results in some situations of Case 2.3.a for random walks starting outside the bundle $X_\mathcal{C}$. We thank Uri Shapira for related and kind discussions.} is the following.

\begin{theorem}[Decomposable action]\label{thm.irreducible.H.decompsable}
Let $G=\PGL_n(\R)$, the space $X$ and groups $Q,R_0,R,S\simeq Q/R_0,\Lambda\simeq R/R_0$ and $H$ be as defined above. 
Suppose that the $H$-action on $\P(\R^{n})$ is irreducible. Then, there exists a unique $H$-compact orbit $\calC$ in $G/Q$ and the $H$-action on $X_\mathcal{C}$ is decomposable. In particular, given a Zariski-dense probability measure $\mu$ on $H$ with finite first moment, we have a bijection
\begin{equation}\label{eq.bijection.in.thm.SL2.dec}
    P_\mu^{\erg}(X_{\mathcal{C}}) \simeq  P_\mu^{\erg}(S/\Lambda),
\end{equation}
where the action of $\mu$ on $S/\Lambda$ comes from the morphism $\rho: H \to S$ in Definition \ref{def.decomposable}.
\end{theorem}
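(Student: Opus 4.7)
The plan breaks into three stages: first, establish uniqueness of the compact $H$-orbit $\calC$; second, produce an explicit $H$-equivariant trivialization of the bundle $X_\calC\to\calC$ establishing decomposability; third, deduce the bijection (\ref{eq.bijection.in.thm.SL2.dec}).

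For the first stage, since $H\simeq\SL_2(\R)$ or $\PGL_2(\R)$ acts irreducibly on $\P(\R^n)$, the linear action of $H$ on $\R^n$ is a symmetric-power representation in which each weight space is one-dimensional. For any parabolic $Q<G$, the highest-weight theory then produces a unique flag of the prescribed type stabilized by a chosen Borel $Q_H<H$, namely the flag built from the leading weight subspaces; existence is guaranteed by Borel's fixed-point theorem and uniqueness by the previous observation. Viewing this flag as a point $x_0\in G/Q$ gives $\Stab_H(x_0)=Q_H$, so $\calC:=H\cdot x_0\simeq H/Q_H$ is the unique compact $H$-orbit in $G/Q$.

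For the second stage, let $\rho_0:Q_H\to S$ denote the natural composition $Q_H\hookrightarrow g_0Qg_0^{-1}\twoheadrightarrow S$, where $g_0\in G$ represents $x_0$. The hypothesis $Q_H^o\cap R_0=\{\id\}$ makes $\rho_0$ injective on $Q_H^o$. Pick a standard basis $\{h,e,f\}$ of $\h=\mathfrak{sl}_2(\R)$ with $\q_H=\R h+\R e$. Setting $\tilde h:=d\rho_0(h)$ and $\tilde e:=d\rho_0(e)$ in $\mathfrak{s}$, algebraicity of the quotient $Q\to S$ ensures $\tilde h$ is semisimple and $\tilde e$ is nilpotent with $[\tilde h,\tilde e]=2\tilde e$; Jacobson--Morozov then produces $\tilde f\in\mathfrak{s}$ completing an $\mathfrak{sl}_2$-triple. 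The resulting Lie algebra morphism $d\rho:\h\to\mathfrak{s}$ integrates, through the complex algebraic structures of $H$ and $S$, to a Lie group morphism $\rho:H\to S$ extending $\rho_0$. Now define $\phi:\calC\to S$ by $\phi(h'\cdot x_0):=\alpha(h',x_0)\rho(h')^{-1}$, where $\alpha$ is the cocycle of a standard trivialization of $X_\calC$. The cocycle identity $\alpha(h'q,x_0)=\alpha(h',x_0)\rho_0(q)$ for $q\in Q_H$, combined with $\rho|_{Q_H}=\rho_0$, shows that $\phi$ is well defined and continuous. The map $(c,y)\mapsto(c,\phi(c)y)$ on $\calC\times S/\Lambda$ then realizes the required $H$-equivariant isomorphism between the decomposable action (with $H$ acting via $\rho$ on the second factor) and the original $H$-action on $X_\calC$.

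For the third stage, decomposability identifies $X_\calC$ with $\calC\times S/\Lambda$ under the diagonal $H$-action $h(c,y)=(hc,\rho(h)y)$. An ergodic $\mu$-stationary $\nu$ on $X_\calC$ projects to the unique Furstenberg measure $\overline\nu_F$ on $\calC$ and disintegrates as $\nu=\int\delta_c\otimes\nu_c\,d\overline\nu_F(c)$ with $\nu_{hc}=\rho(h)_*\nu_c$ for $\mu$-almost every $h$. The second marginal $\eta(A):=\nu(\calC\times A)$ is then $\rho_*\mu$-stationary on $S/\Lambda$, providing the forward map of (\ref{eq.bijection.in.thm.SL2.dec}); the inverse is obtained via the standard Furstenberg--Poisson boundary construction, which recovers the disintegration $(\nu_\xi)$ from $\eta$ together with the $H$-equivariance and $\overline\nu_F$. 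The principal technical difficulty I anticipate lies in the second stage: verifying that the Jacobson--Morozov triple in $\mathfrak{s}$ integrates to a genuine morphism $H\to S$ (rather than merely on the universal cover) that restricts to $\rho_0$ on $Q_H$, requiring bookkeeping of finite covering issues between $\SL_2(\R)$, $\PGL_2(\R)$, and their image inside $S$.
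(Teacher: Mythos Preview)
Your overall strategy is sound and matches the paper's, but there is a genuine gap in the second stage. You write that ``Jacobson--Morozov then produces $\tilde f\in\mathfrak{s}$ completing an $\mathfrak{sl}_2$-triple,'' but classical Jacobson--Morozov only guarantees that a nilpotent $\tilde e$ embeds in \emph{some} $\mathfrak{sl}_2$-triple; it does not guarantee the existence of $\tilde f$ with $[\tilde h,\tilde f]=-2\tilde f$ and $[\tilde e,\tilde f]=\tilde h$ for the \emph{given} semisimple element $\tilde h=d\rho_0(h)$. The refined statement requires $\tilde h\in\operatorname{im}(\ad\tilde e)$, and this is precisely where irreducibility must enter. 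Indeed, your stage~2 argument only uses the Case~2.3 hypothesis $Q_H^o\cap R_0=\{\id\}$, so if it were correct it would also apply to Case~2.3.b; but the paper's Example~\ref{ex.to.be.treated} exhibits a situation (reducible $H$, $Q_H^o\cap R_0=\{\id\}$) where one checks directly that $\tilde h\notin\operatorname{im}(\ad\tilde e)$ and no compatible $\tilde f$ exists.

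The paper closes this gap by exploiting irreducibility explicitly: the weight-space decomposition forces each $W_i$ to be a span of \emph{consecutive top} weight spaces, so the image of $\mathfrak{q}_H$ in each simple factor $\mathfrak{s}_i\simeq\mathfrak{pgl}_{m_i}(\R)$ is the standard pair $(x,e)$ of the irreducible $m_i$-dimensional $\mathfrak{sl}_2$-module, for which the completing $f$ is written down as an explicit matrix. You could repair your argument by verifying, under irreducibility, that $\tilde h\in\operatorname{im}(\ad\tilde e)$ in each factor---but this amounts to the same computation. The integration issue you flag at the end is comparatively minor; the paper disposes of it in one sentence. Your first and third stages are correct and essentially coincide with the paper's (the third stage is its Proposition~\ref{prop.decomposable.measure.class}).
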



This result provides, in a more general setting, a conceptual explanation for the existence of the invariant section discovered by Sargent--Shapira (see the relevant discussion in \cite{sargent-shapira}) and allows us to deduce affirmative answers to (1),(2) and (6) \cite[Problem 1.13]{sargent-shapira}.

\begin{remark}
1. Case 2.3.a is the main particular case of this theorem. In the above statement, if we suppose that $R_0 \neq Q$, then one can verify that $Q_H^\circ\cap R_0=\{\id\}$ and we are in Case 2.3.a.\\
2. It might be possible to generalize the setting of the above theorem to the case where $G$ is a simple $\R$-split linear Lie group and $H<G$ is the image of a principal $\SL_2(\R)$ in $G$ in the sense of Kostant \cite{kostant}.
\end{remark}

In view of the measure classification results of Benoist--Quint and Eskin--Lindenstrauss \cite[Theorem 1.3]{eskin-lindenstrauss.long} on quotients by discrete subgroups, i.e.~ the right-hand-side of \eqref{eq.bijection.in.thm.SL2.dec}, the following is a consequence of Theorem \ref{thm.irreducible.H.decompsable} (and Proposition \ref{prop.decomposable.measure.class}). Recall that a homogeneous measure $\tilde{\nu}$ on $S/\Lambda$ is a probability measure supported on a closed orbit of its stabilizer $S_0<S$. We also say that such a measure is $S_0$-homogeneous. 

\begin{corollary}[Homogeneous fibres]\label{corol.homogeneous.fibres}
Keep the setting of Theorem \ref{thm.irreducible.H.decompsable}. Let $\nu$ be a $\mu$-stationary and ergodic probability measure on $X_\mathcal{C}$. There exists a trivialization $X_\mathcal{C} \simeq H/Q_H \times S/\Lambda$ in whose coordinates $\nu$ is a product measure $\overline{\nu}_F \otimes \tilde{\nu}$, where $\tilde{\nu}$ is $S_0$-homogeneous.
\end{corollary}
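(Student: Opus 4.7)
The plan is to invoke Theorem \ref{thm.irreducible.H.decompsable} to reduce the problem to classifying $\mu$-stationary ergodic measures on the fibre $S/\Lambda$, and then apply the measure classification results of Benoist--Quint and Eskin--Lindenstrauss to conclude homogeneity.

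More precisely, first I would apply Theorem \ref{thm.irreducible.H.decompsable}: the irreducibility of the $H$-action on $\P(\R^n)$ yields a unique $H$-compact orbit $\mathcal{C} \subset G/Q$, identified with $H/Q_H$, and an $H$-equivariant isomorphism $X_\mathcal{C} \simeq H/Q_H \times S/\Lambda$ in which the $H$-action is given by $h(c,f)=(hc,\rho(h)f)$ for a morphism $\rho:H \to S$ extending the composition $Q_H \hookrightarrow Q \twoheadrightarrow S$. This trivialization is the one in which we will read $\nu$.

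Next, I would use the bijection \eqref{eq.bijection.in.thm.SL2.dec} furnished by Theorem \ref{thm.irreducible.H.decompsable} (whose construction is that of Proposition \ref{prop.decomposable.measure.class}): to the $\mu$-stationary ergodic measure $\nu$ on $X_\mathcal{C}$ corresponds a $\mu$-stationary ergodic probability measure $\tilde{\nu}$ on $S/\Lambda$ for the $H$-action via $\rho$, and in the chosen trivialization $\nu$ is literally the product $\overline{\nu}_F \otimes \tilde{\nu}$. It only remains to show that $\tilde{\nu}$ is homogeneous. Since $\mu$ is Zariski-dense in $H$, its pushforward $\rho_\ast \mu$ is Zariski-dense in the semisimple subgroup $\rho(H)<S$, so we are in the setting of random walks on a finite-volume homogeneous space driven by a Zariski-dense finitely supported (or first-moment) measure in a semisimple group. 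The classification theorem of Benoist--Quint, in the form generalized by Eskin--Lindenstrauss \cite[Theorem 1.3]{eskin-lindenstrauss.long}, then implies that any $\rho_\ast \mu$-stationary ergodic probability measure on $S/\Lambda$ is $S_0$-homogeneous for some closed subgroup $S_0<S$ containing $\rho(H)$, giving the desired conclusion.

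There is essentially no hard step left once Theorem \ref{thm.irreducible.H.decompsable} is in hand; the only minor points to verify are that the moment and Zariski-density hypotheses needed to apply \cite{eskin-lindenstrauss.long} transfer from $\mu$ to $\rho_\ast \mu$ (which is routine since $\rho$ is a morphism of algebraic groups and hence Lipschitz with respect to the relevant linear representations), and that the correspondence in \eqref{eq.bijection.in.thm.SL2.dec} is indeed realised by the product construction $\tilde{\nu} \mapsto \overline{\nu}_F \otimes \tilde{\nu}$, which is built into Proposition \ref{prop.decomposable.measure.class}.
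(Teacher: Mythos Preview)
Your proposal is correct and follows exactly the route the paper indicates just before the corollary: combine Theorem~\ref{thm.irreducible.H.decompsable} with Proposition~\ref{prop.decomposable.measure.class} to reduce to the fibre, then apply \cite[Theorem~1.3]{eskin-lindenstrauss.long}. One small point of order worth tightening: Proposition~\ref{prop.decomposable.measure.class} by itself only gives the bijection $\nu \leftrightarrow (\overline{\nu}_F,\tilde{\nu})$ via the Furstenberg decomposition $\nu=\int \delta_{\xi(b)}\otimes(\tilde{\nu})_b\,d\beta(b)$, not the product formula; it is only after Eskin--Lindenstrauss yields $\rho(H)$-invariance of $\tilde{\nu}$ that $(\tilde{\nu})_b\equiv\tilde{\nu}$ and hence $\nu=\overline{\nu}_F\otimes\tilde{\nu}$, so you should swap the order of those two steps.
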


\begin{remark}
Consider any standard trivialization 
$X_\mathcal{C} \simeq H/Q_H \times S/\Lambda$. Let $\nu=\int \delta_\theta \otimes \nu_\theta d\overline{\nu}_F(\theta)$ be the disintegration of $\nu$ over the base $H/Q_H$. Then, there exists a closed subgroup $S_0<S$ such that for $\overline{\nu}_F$-a.e.~$\theta \in H/Q_H$, the fibre measure $\nu_\theta$ is $S_\theta$-homogeneous, where $S_\theta$ is a conjugate of $S_0$.
\end{remark}

\bigskip

The last remaining possibility for the action of $H$ on an $H$-invariant subbundle $X_\mathcal{C}$ is Case 2.3.b, it happens when $Q_H$ is a parabolic subgroup of $H$ and $Q_H^o \cap R_0=\{\id\}$ but $H$-action on $\P(\R^n)$ is not irreducible (see Example \ref{ex.to.be.treated}). In the statement below, we conjecture that the fibre measures are homogeneous, supposing only that the natural morphism $Q_H \to S$ has finite kernel (equivalently, $Q_H^o \cap R_0=\{\id\}$); in other words, the conclusion of Corollary \ref{corol.homogeneous.fibres} holds, without the irreducibility assumption. 

\begin{conjecture}[Homogeneous fibres]\label{conjecture}
Let $G=\PGL_n(\R)$, the space $X$ and groups $Q,R_0,R,S\simeq Q/R_0,\Lambda\simeq R/R_0$ and $H$ be as defined above. Suppose we are in Case 2.3, i.e.~ $Q_H$ is a parabolic subgroup of $H$ and $Q_H^o \cap R_0=\{\id\}$. Then the conclusion of Corollary \ref{corol.homogeneous.fibres} holds.
\end{conjecture}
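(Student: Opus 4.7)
The plan is to extend the approach used in Case 2.3.a by replacing the decomposable structure with a fibre-wise drift argument. Fix a standard trivialization $X_\mathcal{C}\simeq H/Q_H\times S/\Lambda$ with cocycle $\alpha:H\times H/Q_H\to S$, and disintegrate the $\mu$-stationary ergodic measure over the base as $\nu=\int \delta_\theta\otimes\nu_\theta\, d\overline{\nu}_F(\theta)$. The stationarity of $\nu$ translates into the conditional relations
\[
\nu_\theta=\int \alpha(g^{-1},\theta)_\ast\nu_{g^{-1}\theta}\, d\mu(g),
\]
and the hypothesis $Q_H^o\cap R_0=\{\id\}$ guarantees that the restriction of $\alpha$ to $Q_H$ over the base point is an isogeny onto a parabolic subgroup $\rho(Q_H^o)<S$. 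In particular, there is genuine expansion in $S$ along fibre directions, which is the structural input underlying Benoist--Quint-type arguments.

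First I would show that for $\overline{\nu}_F$-a.e.~$\theta$ the fibre measure $\nu_\theta$ is invariant under a nontrivial subgroup of $S$ containing unipotent elements. To this end, I would combine the exponential drift of Benoist--Quint with the six-point variant of Eskin--Mirzakhani and Eskin--Lindenstrauss already used in the proof of Theorem \ref{thm.measure.class.geod}: take two generic points $(\theta,x),(\theta,x')$ with $x'\in S/\Lambda$ close to $x$, and follow typical trajectories under $\alpha(g_n,\theta)$. Because the random walk on $H/Q_H$ contracts generic pairs toward the boundary at a definite exponential rate and $\rho(Q_H^o)<S$ is parabolic, the relative displacement in the fibre can be renormalized to remain of controlled size. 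Passing to a limit along the drift then yields, in the standard Benoist--Quint manner, an additional invariance for $\nu_\theta$ along a transverse algebraic subgroup of $S$.

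Once such invariance is established, Ratner's measure classification theorem for unipotent flows on $S/\Lambda$, combined with Dani--Margulis linearization to handle the measurability of $\theta\mapsto\Stab_S(\nu_\theta)$, implies that each $\nu_\theta$ is homogeneous. Ergodicity of the Furstenberg base measure under the $H$-action then forces the pointwise stabilizers $S_\theta$ to form a single $S$-conjugacy class, giving the form stated in the remark following Corollary \ref{corol.homogeneous.fibres}.

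The main obstacle is the absence, outside Case 2.3.a, of a global algebraic morphism $H\to S$ extending $Q_H\hookrightarrow Q\twoheadrightarrow S$. The drift is algebraic inside each individual fibre, but the direction it produces in distinct fibres cannot a priori be compared via an algebraic structure, so one cannot pull $\nu$ back to a stationary measure on $S/\Lambda$ and quote \cite{BQ3,eskin-lindenstrauss.long} directly as in the decomposable case. Establishing that the drift displacements in nearby fibres are compatibly related by values of $\alpha$ seems to require a fine analysis of the $Q_H$-equivariance of the cocycle together with a Zariski-density-type argument for the image of the random walk in $S$. Controlling this is, in my view, the step that genuinely separates Case 2.3.b from the rest of the paper, and is presumably the reason the statement is left as a conjecture.
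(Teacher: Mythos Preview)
The statement you are asked to prove is explicitly labeled a \emph{conjecture} in the paper, and the paper provides no proof of it. The authors write that ``the description of dynamics in this case remains open'' and that Case 2.3.b ``remains a challenge''; they only supply Example \ref{ex.to.be.treated} to show that the decomposability mechanism of Case 2.3.a genuinely fails. There is therefore no proof in the paper to compare your proposal against.

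Your proposal is not a proof either, and you correctly say so in your final paragraph. What you have written is a plausible research outline: run a fibrewise drift to produce extra invariance, then invoke Ratner. The honest gap you identify --- that without a morphism $H\to S$ one cannot align the drift displacements coming from different fibres, and so cannot transport the problem to a single $S/\Lambda$ --- is exactly the obstruction. One additional difficulty your sketch glosses over: in Case 2.3 the image $\rho(Q_H^o)$ in $S$ is a solvable (Borel-type) subgroup, not a semisimple one, so the ``expansion along fibre directions'' you appeal to is not the clean Lyapunov expansion used in Benoist--Quint; the fibre dynamics mixes diagonal and unipotent pieces in a way that the existing drift arguments (both the exponential drift of \cite{BQ1} and the six-point variant of \cite{eskin-lindenstraus.short}) are not designed to handle simultaneously. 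Your outline does not address how to separate or combine these.

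In short: there is nothing to grade against, your self-assessment that the argument is incomplete is accurate, and the missing step you name is indeed the substantive one.
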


We now turn to the equidistribution aspect of random walks on $H$-subbundles $X_\mathcal{C}$ of $X$. We keep the same setting as in the measure classification part above; we suppose in addition that $\Lambda$ is a lattice in $S$ (except for Theorem \ref{thm.equidist.geod} below). Let as usual $\mu$ be a probability measure on $G$ that is Zariski-dense in a copy $H$ of $\SL_2(\R)$ or $\PGL_2(\R)$ in $G$. Given a point $x \in X_\mathcal{C}$, we are interested in describing the asymptotic behaviour of the averaged distribution $\frac{1}{n} \sum_{k=1}^n \mu^{\ast n}\ast \delta_x$ of the random walk on $X$ starting from $x$ up to the step $n$.

In all cases in which we treat the measure classification problem, i.e.~ all cases except Case 2.3.b, it will be possible to address the equidistribution problem. In fact, Case 1 (trivial base) is precisely the setting of Benoist--Quint \cite{BQ3} so the corresponding equidistribution results (see \cite{prohaska-sert-shi, benard-desaxce} extending the original results with respect to moment hypotheses) directly apply; we do not comment on it further here. Case 2.1 boils down to the equidistribution to the Furstenberg measure; even quantitative statements are known for this case, see \S \ref{subsub.equidist.trivial.fibre}. Finally, thanks to the decomposability obtained in Theorem \ref{thm.irreducible.H.decompsable}, it is not hard to see that Case 2.3.a also boils down to the setting of Benoist--Quint, see Proposition \ref{prop.equidist.H.irred}. 

Therefore the only case that needs to be handled is Case 2.2, i.e.~diagonal fibre action. In this case, we will observe that (see Lemma \ref{lemma.its.iwasawa}) we have a standard trivialization $X_\calC\simeq \calC\times_\alpha S/\Lambda$ such that the action $\alpha$ of $H$ on the fibre $S/\Lambda$ is by a one-dimensional diagonal subgroup $D$ of $S$ through the Iwasawa cocycle $\sigma$ up to a sign. It is  well-known that the $D$-orbits of different points on $S/\Lambda$ can exhibit very different statistical behaviours, i.e.~ not characterized by a single $D$-invariant measure. Given the existence of this chaotic behaviour, the most one can hope to establish is that the statistical behaviour of the $\mu$-random walk in the fibres matches that of the $D$-flow. This is the content of the following result. In the statement, the equidistribution of a $D$-orbit is understood with respect to a Haar/Lebesgue measure on $D$.

	
\begin{theorem}[Diagonal fibre action: equidistribution]\label{thm.equidist.geod}
Keep the hypotheses and notation of Theorem \ref{thm.measure.class.geod} and let $X_\mathcal{C} \simeq H/Q_H \times S/\Lambda$ be the trivialization given by Theorem \ref{thm.measure.class.geod}. Suppose in addition that the measure $\mu$ has finite exponential moment and $\Gamma_\mu$ is inside the connected component of $H\simeq\PGL_2(\R)$. Then, the $D$-orbit of $z \in S/\Lambda$ equidistribute to a probability measure $m$ on $S/\Lambda$ if and only if for any $x=(\theta,z)\in X_\calC$, we have the convergence
\[ \frac{1}{n}\sum_{k=1}^n\mu^{*k}*\delta_x\rightarrow \bar{\nu}_F\otimes m  \quad  \text{as} \; \; n \to \infty. \]
\end{theorem}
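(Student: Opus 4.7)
The plan is to reduce the statement to an equivalence between Cesàro averages for the random walk on the fibre and Cesàro averages for the $D$-flow on $S/\Lambda$, via the uniform quantitative renewal theorem of Li \cite{jialun.ens} and Li--Sahlsten \cite{jialun.advances}. By Lemma \ref{lemma.its.iwasawa}, in the standard trivialization provided by Theorem \ref{thm.measure.class.geod} the cocycle $\alpha$ is, up to a sign that is absorbed by the connectedness assumption on $\Gamma_\mu$, the Iwasawa cocycle $\sigma$ valued in the one-dimensional subgroup $D=\{T_t\}_{t\in\R}<S$. Writing $b_n=g_n\cdots g_1$, the random walk started at $x=(\theta,z)$ is at position $(b_n\theta,\,T_{\sigma(b_n,\theta)}z)$ at time $n$, so for a test function of product type $f_1\otimes f_2\in C(\calC)\otimes C(S/\Lambda)$ one is led to study
$$A_n(\theta,z):=\frac{1}{n}\sum_{k=1}^n \E\bigl[f_1(b_k\theta)\,f_2(T_{\sigma(b_k,\theta)}z)\bigr].$$

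The analysis then proceeds in two steps. First, under the finite exponential moment assumption, the uniform quantitative renewal theorem yields, uniformly in the starting direction $\theta\in\calC$ and in the fibre point $z\in S/\Lambda$, the comparison
$$\frac{1}{n}\sum_{k=1}^n \E\bigl[f_2(T_{\sigma(b_k,\theta)}z)\bigr] \;=\; \frac{1}{n\lambda}\int_0^{n\lambda} f_2(T_t z)\,dt \;+\; o(1),$$
where $\lambda>0$ is the top Lyapunov exponent of $\mu$. This turns the random-walk Cesàro average in the fibre into a genuine $D$-flow time average of the same length. Second, one decouples the base and fibre contributions in $A_n$: splitting $b_k=\beta\alpha$ with $\alpha=b_{k_n}$ for a slowly growing cut-off $k_n=o(n)$, $k_n\to\infty$, and using the Markov property together with the additivity $\sigma(b_k,\theta)=\sigma(\beta,\alpha\theta)+\sigma(\alpha,\theta)$, the factor $f_1(b_k\theta)$ equidistributes to $\overline{\nu}_F(f_1)$ as $k-k_n\to\infty$ by the classical equidistribution theorem on the flag variety, while the remaining tail cocycle $\sigma(\beta,\alpha\theta)$ is precisely what the renewal theorem controls, uniformly in the starting direction $\alpha\theta\in\calC$.

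Combining the two steps one obtains
$$A_n(\theta,z) \;=\; \overline{\nu}_F(f_1)\cdot\frac{1}{n\lambda}\int_0^{n\lambda} f_2(T_t z)\,dt \;+\; o(1),$$
and since the right-hand side converges as $n\to\infty$ to $\overline{\nu}_F(f_1)\,m(f_2)$ for all such product test functions if and only if the $D$-orbit of $z$ equidistributes to $m$, density of product functions in $C(X_\calC)$ together with Stone--Weierstrass gives the desired equivalence. For the converse direction one may alternatively invoke Theorem \ref{thm.measure.class.geod} together with Remark \ref{rk.conversely.to.diagonal.thm}: any weak-$*$ accumulation point of $\tfrac{1}{n}\sum_k \mu^{*k}\!*\!\delta_x$ is $\mu$-stationary and hence decomposes as a convex combination of product measures of the form $\overline{\nu}_F\otimes m'$ with $m'$ a $D^{\pm}$-invariant probability on $S/\Lambda$, so that identifying $m'=m$ reduces once again to the fibre renewal comparison. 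The main obstacle is the quantitative uniformity required in the second step: one must guarantee that the correlation between $f_1(b_k\theta)$ and $f_2(T_{\sigma(b_k,\theta)}z)$ at the same random time $k$ does not accumulate under the Cesàro average, which requires carefully combining the exponential contraction of the proximal walk on $\calC$ with the renewal estimate held uniformly in the starting direction, and is precisely where the exponential moment hypothesis and the quantitative uniformity established in \cite{jialun.ens,jialun.advances} are used in an essential way.
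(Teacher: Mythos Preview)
Your overall strategy---reduce the Ces\`aro average on $X_\calC$ to a $D$-flow average on $S/\Lambda$ via the uniform quantitative renewal theorem---is exactly the paper's. However, the route you take to handle the coupling between $f_1(b_k\theta)$ and $f_2(T_{\sigma(b_k,\theta)}z)$ is both more complicated than needed and, as written, has a gap.

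The gap is in your ``second step''. In your splitting $b_k=\beta\alpha$ with $\alpha=b_{k_n}$ the first $k_n$ letters, both the angular factor $f_1(\beta\cdot(\alpha\theta))$ and the cocycle $\sigma(\beta,\alpha\theta)$ are functions of the \emph{same} random word $\beta$. Conditioning on $\alpha$ and invoking the Markov property does not separate them: you still face the joint expectation $\E_\beta\bigl[f_1(\beta\eta)\,f_2(T_{\sigma(\beta,\eta)+c}z)\bigr]$, which is the original problem with a shifted starting point. So the split as described achieves nothing. (A splitting that \emph{would} decouple takes the \emph{last} $k_n$ letters for the angular part and the first $k-k_n$ for the cocycle, using that $\sigma$ of a short word is $o(n)$; but this is not what you wrote.)

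The paper sidesteps this entirely. The renewal theorem quoted (Theorem~\ref{thm.renewal}, from \cite{jialun.advances}) is already a \emph{joint} statement for test functions on $\bbS\times\R$: its main term is the product measure $\nu_\z\otimes Leb$. So one applies it directly to $\varphi(y,s)=\psi(y,\,\calG(s)z)$ with $\psi\in C^3(X_\calC)$, and the decoupling you identify as ``the main obstacle'' is already built into the conclusion of the renewal theorem---no splitting argument is needed. A second ingredient you do not mention but which is essential in the paper's proof is Le Page's large deviation estimate (Theorem~\ref{thm:LDP}): the renewal theorem controls the infinite sum $\sum_{k\ge1}$, and one needs large deviations to show that the contribution of $k>n$ (where $\sigma_\chi(b_k,\z)$ typically exceeds $\lambda_\mu n$) and of $\sigma_\chi$ outside $[\eps_1 n,(\lambda_\mu-\eps_1)n]$ is negligible, so that the renewal sum matches the finite Ces\`aro average up to $o(1)$.
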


\begin{remark}
For $H\simeq \SL_2(\R)$, we have a similar equidistribution result. But the statement is more complicated. See \S\ref{subsec.equidist.diagonal} for more details.
\end{remark}

\begin{remark}[Alternative proof for Theorem \ref{thm.measure.class.geod}]
The results we establish to prove Theorem \ref{thm.equidist.geod} allow us to obtain a different proof of Theorem \ref{thm.measure.class.geod} under the additional finite exponential moment condition: let $\nu$ be a $\mu$-stationary and ergodic measure. By Chacon-Ornstein ergodic theorem, there exists $x$ such that $\frac{1}{n}\sum_{k=1}^n\mu^{*k}*\delta_x\rightarrow \nu$ as $n \to \infty$. From the proof of Theorem \ref{thm.equidist.geod}, we actually obtain that $\frac{1}{n}\sum_{k=1}^n\mu^{*k}*\delta_x$ and $\bar\nu_F\otimes \frac{1}{t}\int_0^t \delta_{\alpha(t)(z)}\ dt$ have the same limit as $t,n \to \infty$, where $\alpha(t)$ is the flow of $D$ and $x=(\theta,z)$. Therefore $\bar\nu_F\otimes \frac{1}{t}\int_0^t \delta_{\alpha(t)(z)}\ dt\to\nu$, which implies $\frac{1}{t}\int_0^t \delta_{\alpha(t)(z)}\ dt\to m$ for some $D$-invariant probability measure $m$ and hence the conclusion of Theorem \ref{thm.measure.class.geod}.
\end{remark}

\subsection{Ideas of proofs}\label{subsec.intro.proofs}
Before finishing the introduction, we give a brief overview of the ideas of proofs used to obtain the main results of this paper.\\[-4pt]

\textbullet ${}$ \textbf{Case 2.2 (measure classification): Drift argument.} The basic idea in Case 2.2 is to use the non-triviality of the fibre bundle structure of $X_\mathcal{C}$ over $\mathcal{C}$ to obtain invariance of the measures.
More concretely, for each cocycle $\alpha:H\times \calC\to S$, one can try to define a cross-ratio for quadruple $a,a',b,b'\in H^\N$
\[C_\alpha(a,a',b,b')=\lim_{n,m\to \infty}\alpha(a'^n,\xi(b))\alpha(a^{m},\xi(b'))^{-1}\alpha(a'^n,\xi(b'))^{-1}\alpha(a^m,\xi(b')) \]
with suitable limits of $n,m$, where $\xi$ is some map from $H^\N$ to $\calC$. If the cocycle $\alpha$ is cohomologous to a morphism from $H$ to $S$, that is, the bundle structure is trivial, then any reasonable definition of cross-ratio will yield no information. This  corresponds to \textit{decomposable} action in Theorem \ref{thm.irreducible.H.decompsable}. Otherwise, if the bundle structure is not trivial, as in Case 2.2, then the cross-ratio is non-trivial for generic four points $a,a',b,b'$ and yields certain information on the relation between asymptotic behaviour of products corresponding to those points. In this case, we adapt the drift argument of \cite{eskin-lindenstraus.short} to ``six points drift argument'' to exploit this information and obtain invariance under a limit cross-ratio. This six points drift argument is very different from the drift argument in \cite{BQ1} or \cite{sargent-shapira}; we do not use expansion on some tangent directions (indeed, in Case 2.2, we have no expansion). It is really the non-triviality of the cross-ratio or equivalently the bundle structure that helps us to obtain the invariance of the measures.

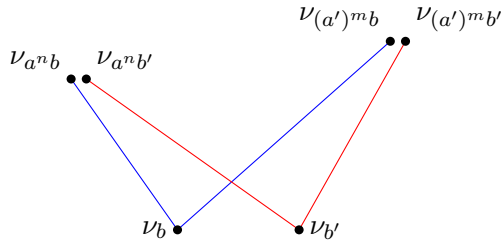
\begin{figure}[H]\label{fig.drift}
 \begin{tikzpicture}

    \coordinate (O) at (0,0);

    \coordinate[label = left:$\nu_b$] (1) at (-0.8,-1.2);
    
    \coordinate[label = right:$\nu_{b'}$] (2) at (0.8, -1.2);

    \coordinate[label = above left:$\nu_{a^nb}$] (3) at (-2.2,0.8);

    \coordinate[label = above right:$\nu_{a^nb'}$] (4) at (-2,0.8);

    \coordinate[label = above left:$\nu_{(a')^m b}$] (5) at (2,1.3);
    
    \coordinate[label = above right:$\nu_{(a')^m b'}$] (6) at (2.2,1.3);

    \draw[blue] (1) -- (3);

    \draw[blue] (1) -- (5);

    \draw[red] (2) -- (4);
    
    \draw[red] (2) -- (6);

    \node at (1)[circle,fill,inner sep=1.2pt]{};
    \node at (2)[circle,fill,inner sep=1.2pt]{};
    \node at (3)[circle,fill,inner sep=1.2pt]{};
    \node at (4)[circle,fill,inner sep=1.2pt]{};
    \node at (5)[circle,fill,inner sep=1.2pt]{};
    \node at (6)[circle,fill,inner sep=1.2pt]{};

\end{tikzpicture}
\caption{Six points drift argument}
\label{tikzpic}
\end{figure}

\vspace{-9pt}

\textbullet ${}$ \textbf{Case 2.3 (measure classification): Decomposable action.} In some sense, the key difficulty in this case  is to suspect the possibility of the existence of a decomposable action in our setting. Once one has this possibility in mind, one can use ideas about cocycles going back to Mackey \cite{mackey49, mackey58} (see Varadarajan \cite[\S 5]{varadarajan.quantum} or Zimmer \cite{zimmer.book} for precise expressions) to establish this decomposability. The latter expresses a certain algebraic structure in, or equivalently triviality of, the fibre-bundle $X_\mathcal{C}$ and in more concrete terms it boils down to an extension of a natural morphism $Q_H \to S$ to a larger group (only possibility being $H$ in our setting). Once this is established, one reduces the situation to a trivial-bundle structure and hence one can bring in the results of Benoist--Quint and Eskin--Lindenstrauss.\\[-3pt]

\textbullet ${}$ \textbf{Case 2.2 (Equidistribution): Uniform quantitative renewal theorem.}
The key point that enables us to obtain the equidistribution result (Theorem \ref{thm.equidist.geod}) and an alternative proof of Theorem \ref{thm.measure.class.geod} under a stronger moment assumption, is the fact that thanks to the particular situation we have precise control of random matrix products in the form of a uniform quantitative renewal theorem and exponential large deviation estimates. More precisely, under suitable trivialization, the Ces\`{a}ro average can be expressed as
\[\frac{1}{n}\sum_{k=1}^n\mu^{*k}*\delta_x=\frac{1}{n}\sum_{k=1}^n\int \delta(g\theta,\alpha(\sigma_\chi(g,\theta))z) \ d\mu^{*k}(g), \]
where $x=(\theta,z)$ and $\sigma_\chi$ is some cocycle from $H\times \calC$ to $\R$. This is very similar to the renewal sum $\sum_{k=1}^\infty \int \delta(g\theta,\sigma_\chi(g,\theta)-t)\ d\mu^{*k}(g)$ which converges to the product measure $\bar{\nu}_F\otimes Leb_{\R^+}$ with respect to compactly supported continuous functions. Combined with exponential large deviation estimates and good error estimates from the uniform quantitative renewal theorem, we can prove the equidistribution result.

\bigskip

This article is organized as follows. Section \ref{sec.prelim} contains some preliminary tools about fibred dynamics, cocycles and stationary measures. Section \ref{sec.meas.class} is devoted to proving the measure classification results; Theorem \ref{thm.measure.class.geod} and Theorem \ref{thm.irreducible.H.decompsable} are proved therein. Finally, Section \ref{sec.equidist} contains the equidistribution results, in particular the proof of Theorem \ref{thm.equidist.geod}.



\section{Preliminaries: Cocycles, decomposable actions and stationary measures}\label{sec.prelim}

This section contains a collection of  preliminaries for the proofs in the following parts. We adopt a general setting. In \S \ref{subsec.cocycles}, after a discussion of cocycle induced by trivializations or sections, we introduce the notion of a decomposable action and present an important criterion for decomposability. \S \ref{subsec.stat.measures} contains a discussion of stationary measures and their decompositions, due to Furstenberg. Finally, in \S \ref{subsec.meas.class.sec2}, we single out a description of stationary measures for decomposable actions.

\subsection{Generalities on cocycles and decomposable actions}\label{subsec.cocycles}
Let $G$ be a locally compact and second countable (lcsc) group and $X$ a standard Borel space endowed with a Borel $G$-action $G \times X \to X$. Let $Q<G$ be a closed subgroup and suppose we have a measurable $G$-equivariant surjection $\pi: X \to G/Q$. We shall refer to such a $G$-space $X$ as a fibre bundle over $G/Q$. A fruitful way to describe the $G$-action on such bundles $X$ is by using the notion of cocycles. This approach -- going back to the work of Mackey \cite{mackey49, mackey58} (see Varadarajan \cite[\S 5]{varadarajan.quantum}) on induction of unitary representations -- will be instrumental to our considerations. 

\subsubsection{Cocycles defined by actions and vice versa}

Given a bundle over $G/Q$ with $G$-action, let $F$ be a copy of the fibre above $Q$, i.e.~ of the Borel set $\pi^{-1}(Q)$ endowed with the $Q$-action. A $(G,Q)$-bundle trivialization of $X$ is a Borel isomorphism $\phi=(\phi_1,\phi_2):X \simeq G/Q \times F$ such that $\phi_1$ is $G$-equivariant and $\phi_2$ is equivariant with respect to a $Q$-valued cocycle $\alpha: G \times G/Q \to Q$, i.e.~ $\phi(gx)=(g\phi_1(x),\alpha(g,\phi_1(x))\phi_2(x))$. Notice that $F$ has a natural $Q$-action. Recall that a cocycle $\alpha:G \times G/Q \to Q$ is a map satisfying $\alpha(g_1g_2,f)=\alpha(g_1,g_2f) \alpha(g_2,f)$  for every $g_1,g_2 \in G$ and $f \in G/Q$  (this corresponds to what is called a strict cocycle in \cite{varadarajan.quantum}). By using the cocycle relation, one sees that any cocycle $\alpha: G \times G/Q \to Q$ endows the space $G/Q \times F$ with a $G$-action. We shall denote the space $G/Q \times F$ endowed with a $G$-action induced by a $Q$-valued cocycle $\alpha$ by $G/Q \times_\alpha F$. Therefore, a $(G,Q)$-bundle trivialization is a $G$-equivariant isomorphism between $X$ and $G/Q \times_\alpha F$ for some $Q$-valued cocycle $\alpha$.

In the rest of this paper, the ambient space $X$ which we will work with will be, in particular, a homogeneous space of a lcsc group $G$. For $x \in X$, we denote by $G_x$ the stability group $\{g \in G : gx=x\}$ 
so that we have a $G$-equivariant identification $X \simeq G/G_x$. We will suppose that the stability group $G_{x_0}=:R$ of a base point $x_0 \in X$ is contained in $Q$ so that we have a continuous $G$-equivariant surjection $\pi: G/R \simeq X \to G/Q$ turning the homogeneous space $X$ into a bundle over $G/Q$ with $G$-action. The choice of the base point $x_0$ identifies the fibre $\pi^{-1}(Q)$ with the $Q$-homogeneous space $Q/R$. In this setting, any Borel section $s:G/Q \to G$ yields a trivialization of $X$ given by the Borel isomorphism
\begin{equation}\label{eq.trivialization}
\begin{aligned}
X &\to G/Q \times Q/R\\
x &\mapsto (\pi(x), s(\pi(x))^{-1}x).
\end{aligned}
\end{equation}
Here the Borel section $s$ is a section of the principal $Q$-bundle $G\rightarrow G/Q$, i.e.~ $s(gQ)Q=gQ$.
Such a section $s$ induces a Borel cocycle $\alpha:G \times G/Q \to Q$ by setting
\begin{equation}\label{eq.construct.cocycle}
\alpha(g,hQ):=s(ghQ)^{-1}g s(hQ),
\end{equation}
which makes the trivialization \eqref{eq.trivialization} into a $(G,Q)$-bundle trivialization. 

For a $G$-homogeneous bundle $X$, we will say that a $(G,Q)$-bundle trivialization $X \simeq G/Q \times_\alpha Q/R$ is \textit{standard} if the morphism $\rho_\alpha: Q \to Q$ given by $\rho_\alpha(q)=\alpha(q,Q)$ is conjugate to the identity morphism $Q \to Q$. A trivialization given by a choice of section as above is standard. Conversely, a standard trivialization is induced by a choice of section $s:G/Q \to G$.

In the sequel, we will assume further structure on the groups making up the space $X$. Namely, we will suppose that there exists a closed normal subgroup $R_0$ of $Q$ which is also a subgroup of $R$ so that writing $S:=Q/R_0$ and $\Lambda=R/R_0$, we have an identification of the fibre $Q/R$ with $S/\Lambda$. The reason for this is that starting from Section \ref{sec.meas.class}, the group $\Lambda$ will be a discrete subgroup of $S$ which will make the action on $Q/R \simeq S/\Lambda$ more tractable. Composing a cocycle $\tilde{\alpha}:G \times G/Q \to Q$ with the epimorphism $\pi_1:Q \to S=Q/R_0$, we obtain an $S$-valued cocycle $\alpha: G \times G/Q \to S$. 
Since the action of $Q$ on the fibre $Q/R$ factors through $S$, the $S$-valued cocycle $\alpha=\pi_1\circ\tilde{\alpha}$ is sufficient to reconstruct the bundle. From now on, we will mainly consider $S$-valued cocycles.

\subsubsection{Equivalence of cocycles}\label{subsub.equiv.cocycles}
Let $G'$ be a lcsc group. Two $G'$-valued cocycles $\alpha, \beta: G \times G/Q \to G'$ are said to be equivalent, denoted $\alpha \sim \beta$, if there exists a Borel map $\phi:G/Q \to G'$ such that for every $x \in G/Q$ and $g \in G$, we have 
\begin{equation}\label{eq.eq.cocycles}
\alpha(g,x)=\phi(gx)^{-1} \beta(g,x) \phi(x).    
\end{equation}
It is clear that for two $S$-valued cocycles $\alpha$ and $\beta$ over $G/Q$, if $\alpha \sim \beta$, then the associated $G$-spaces $G/Q \times_\alpha Q/R$ and $G/Q \times_\beta Q/R$ are isomorphic. Accordingly, $S$-valued cocycles obtained from different sections $s:G/Q \to G$ via \eqref{eq.construct.cocycle} are equivalent. 

In the sequel, we will be interested in actions of subgroups $H$ of $G$ on a $G$-homogeneous bundle $X$ and the associated subbundles. We will use similar terminology for cocycles restricted to $H$. Let $H$ be a closed subgroup of $G$ and $\mathcal{C} \subseteq G/Q$ an $H$-homogeneous closed subset of $G/Q$. Up to replacing $Q$ by a conjugate, suppose $\mathcal{C}=HQ$ so that $\mathcal{C} \simeq H/Q_H$ where $Q_H:=H \cap Q$. In that case $X_\mathcal{C}:=\pi^{-1}(\mathcal{C}) \subseteq X$ is an $H$-invariant closed (not necessarily $H$-homogeneous) subset of $X$ giving rise to a bundle over $\mathcal{C}$ with $H$-action. 

\begin{example}\label{ex.ss} The above situation appears in (and is motivated by) the setting of the work of Sargent--Shapira \cite{sargent-shapira}: Let $X$ be the set of homothety-equivalence classes of rank-2 lattices in $\mathbb{R}^3$. The set $X$ has a natural lcsc topology and the group $G=\SL_3(\R)$ acts continuously and transitively on $X$. The connected component $R_0$ of the stabilizer $R$ of a point $x \in X$ consists of the solvable radical of a maximal parabolic group $Q$ in $G$ 
and we have a surjective map $S = Q/R_0 \to Q/R\simeq S/\Lambda\simeq \SL_2(\R)/\SL_2(\Z)$, where $R/R_0 \simeq \Lambda=\PGL_2(\Z)$. One can then consider the action of the subgroup $H=\SO(2,1)<G$ on $X$ which has a unique minimal invariant subset $\mathcal{C}$ in $G/Q$. The group $Q_H=H \cap Q$ corresponds to a Borel subgroup of $H$. 
\end{example}

\subsubsection{Induced morphisms and decomposable subbundle actions}\label{subsub.bundle.dec}

Recall the notion of a decomposable bundle in Definition \ref{def.decomposable}. We provide a criterion to ensure that an $H$-homogeneous bundle $X_\mathcal{C}$ is decomposable. %


\begin{proposition}\label{prop.decomposable}
If the morphism $Q_H \hookrightarrow Q \twoheadrightarrow S
$ extends to a morphism $H \to S$, then $X_\mathcal{C}$ is decomposable.
\end{proposition}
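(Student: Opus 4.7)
The plan is to unwind the cocycle description of $X_\mathcal{C}$ with respect to a standard trivialization and verify that the assumed extension $\rho: H \to S$ of $\pi_1|_{Q_H}$ realizes the cocycle associated with the bundle as a coboundary of the ``constant'' cocycle $(h,x) \mapsto \rho(h)$. This is essentially Mackey's principle that a cocycle is cohomologous to a homomorphism precisely when the bundle is a trivial product on which the acting group acts diagonally via that homomorphism.

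First, I would fix a Borel section $s:H/Q_H\to H$ of the fibration $H \to H/Q_H$, which exists by standard selection results for lcsc groups. Using the identification $\mathcal{C}\simeq H/Q_H$ and the fibre identification $\pi^{-1}(Q)\simeq Q/R\simeq S/\Lambda$, the map $x\mapsto \bigl(\pi(x),\,s(\pi(x))^{-1}x\bigr)$ gives a standard Borel trivialization $X_\mathcal{C}\simeq H/Q_H\times S/\Lambda$, and as in \eqref{eq.construct.cocycle} the corresponding $S$-valued cocycle is
\[
\alpha(h, xQ_H) \;=\; \pi_1\!\bigl(s(hxQ_H)^{-1}\, h\, s(xQ_H)\bigr),
\]
where $\pi_1:Q\twoheadrightarrow S$. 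By construction $s(hxQ_H)^{-1}hs(xQ_H)\in Q_H$, and the hypothesis that $\pi_1|_{Q_H}$ extends to the morphism $\rho:H\to S$ tells us that on $Q_H$ we may replace $\pi_1$ by $\rho$. Hence
\[
\alpha(h,xQ_H) \;=\; \rho\!\bigl(s(hxQ_H)^{-1}hs(xQ_H)\bigr) \;=\; \rho(s(hxQ_H))^{-1}\,\rho(h)\,\rho(s(xQ_H)).
\]

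Second, I would define the Borel transfer map $\phi:H/Q_H\to S$ by $\phi(xQ_H):=\rho(s(xQ_H))$. The displayed identity then reads $\alpha(h,x)=\phi(hx)^{-1}\rho(h)\phi(x)$, i.e.\ $\alpha$ is equivalent, in the sense of \S\ref{subsub.equiv.cocycles}, to the constant cocycle $\beta(h,x):=\rho(h)$ (which is indeed a cocycle because $\rho$ is a homomorphism). By the discussion of equivalence of cocycles, this equivalence produces an $H$-equivariant Borel isomorphism $H/Q_H\times_\alpha S/\Lambda \simeq H/Q_H\times_\beta S/\Lambda$, and the right-hand side is by definition the decomposable $H$-action $h\cdot(c,f)=(hc,\rho(h)f)$ from Definition \ref{def.decomposable}. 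Composing with the trivialization of the first step yields the required isomorphism $X_\mathcal{C}\simeq \mathcal{C}\times S/\Lambda$ as $H$-spaces.

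The proof reduces to a direct cocycle manipulation, so there is no real obstacle beyond bookkeeping; the substantive content is entirely carried by the hypothesis that $\pi_1|_{Q_H}$ extends to all of $H$, which is precisely what allows one to pull the inner expression through $\pi_1$ in the computation above. The Borel regularity of $s$ (and hence of $\phi$) is automatic in the present lcsc setting, and no further assumption on $\rho$ (e.g.\ continuity) is needed beyond what is already encoded in the hypothesis.
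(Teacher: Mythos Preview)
Your proof is correct and follows essentially the same approach as the paper: both show that the standard-trivialization cocycle is equivalent to the morphism-type cocycle $\beta(h,x)=\rho(h)$, and then conclude decomposability from the fact that equivalent cocycles yield isomorphic bundles. The only difference is that the paper invokes Lemma~\ref{lemma.cocycle.bijection} (the Mackey bijection between cocycle classes and conjugacy classes of induced morphisms) as a black box to obtain the equivalence, whereas you write out the transfer map $\phi=\rho\circ s$ explicitly and verify the coboundary identity by hand; your computation is in effect the relevant piece of the proof of that lemma, inlined.
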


The following statement is a version of \cite[Proposition 4.2.16]{zimmer.book} and provides a useful characterization of a decomposable action in our setting. 
Let $\Co(H,Q_H,S)$ denote the set of Borel cocycles $H \times H/Q_H \to S$. Given $\alpha \in \Co(H,Q_H,S)$, the map $\rho_\alpha$ defined by $\rho_\alpha(p):=\alpha(p,Q_H)$ for $p \in Q_H$ defines a Borel (hence continuous) morphism from $Q_H \to S$. 

The proof is based on an important observation of Mackey which characterizes equivalence classes of Borel cocycles $\Co(H,P,G')$ by conjugacy classes of induced morphisms $P \to G'$ where conjugacy is understood by an element of $G'$ in the target. We have the following result from \cite[Theorem 5.27]{varadarajan.quantum} that we adapt to our setting here.

\begin{lemma}\label{lemma.cocycle.bijection}
The map 
\begin{equation*}
\begin{aligned}
\Co(H,P,G') &\to \Hom(P,G')\\
\alpha & \mapsto \rho_\alpha
\end{aligned}
\end{equation*}
is a surjective map that descends to a bijection when we quotient $\Co(H,P,G')$ by equivalence of cocycles and $\Hom(P,G')$ by conjugation in $G'$.
\end{lemma}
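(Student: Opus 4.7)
The plan is to verify, in order, surjectivity of $\alpha \mapsto \rho_\alpha$, well-definedness of the descended map (equivalent cocycles produce $G'$-conjugate morphisms), and the injectivity on equivalence classes, which is the substantive point. Throughout, I fix once and for all a Borel section $s : H/P \to H$ of the quotient map with $s(P) = e$; such a section exists by classical Borel selection for lcsc groups.

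For surjectivity, given $\rho \in \Hom(P, G')$, the formula $\omega(h,x) := s(hx)^{-1} h\, s(x)$ defines a Borel cocycle $H \times H/P \to P$, so $\rho \circ \omega$ is a Borel $G'$-valued cocycle. Since $s(pP) = s(P) = e$ for $p \in P$, one has $\omega(p, P) = p$, so $\alpha := \rho \circ \omega$ satisfies $\rho_\alpha = \rho$. Well-definedness on equivalence classes is immediate from the definition: if $\alpha(h,x) = \phi(hx)^{-1} \beta(h,x) \phi(x)$ for some Borel $\phi : H/P \to G'$, evaluating at $(p, P)$ with $p \in P$ yields
$$\rho_\alpha(p) = \phi(P)^{-1} \rho_\beta(p) \phi(P),$$
so $\rho_\alpha$ and $\rho_\beta$ are conjugate in $G'$ via $\phi(P)$.

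The substantive step is injectivity. Suppose $\rho_\alpha = g^{-1} \rho_\beta g$ for some $g \in G'$. The constant map $\phi \equiv g$ gives a cocycle equivalence $\beta \sim \beta'$ with $\beta'(h,x) := g^{-1} \beta(h,x) g$, and clearly $\rho_{\beta'} = \rho_\alpha$; replacing $\beta$ by $\beta'$, one may assume $\rho_\alpha = \rho_\beta$. Decomposing $h = s(hx)\, \omega(h,x)\, s(x)^{-1}$ with $\omega(h,x) \in P$, applying the cocycle identity twice, and using the consequence $\alpha(s(x)^{-1}, x) = \alpha(s(x), P)^{-1}$ of the cocycle relation at $e = s(x)^{-1} s(x)$, one obtains
$$\alpha(h, x) = \psi_\alpha(hx) \cdot \rho_\alpha(\omega(h,x)) \cdot \psi_\alpha(x)^{-1}, \qquad \psi_\alpha(x) := \alpha(s(x), P),$$
and analogously for $\beta$. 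Setting $\phi(x) := \psi_\beta(x)\, \psi_\alpha(x)^{-1}$ and inserting $\rho_\alpha = \rho_\beta$, a direct substitution yields $\alpha(h,x) = \phi(hx)^{-1} \beta(h,x) \phi(x)$, i.e.\ $\alpha \sim \beta$.

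The main obstacle is measure-theoretic rather than algebraic: one must verify that the section $s$, the auxiliary cocycle $\omega$, and the transfer map $\phi$ are all Borel, so that the equivalences produced genuinely lie in $\Co(H, P, G')$ and that $\rho_\alpha$ is a continuous morphism (hence a bona fide element of $\Hom(P, G')$). This is handled by Mackey's Borel section theorem for principal bundles of lcsc groups together with the classical automatic continuity of Borel homomorphisms between lcsc groups. Once these measurability points are in place, the algebraic manipulations above close the proof, in line with Varadarajan's treatment in \cite[Theorem 5.27]{varadarajan.quantum}.
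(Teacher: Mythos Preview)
Your proof is correct and follows essentially the same approach as the paper's: both establish the key decomposition $\alpha(h,x) = \alpha(s(hx),P)\,\rho_\alpha(\omega(h,x))\,\alpha(s(x),P)^{-1}$ via the cocycle relation and use it to build the equivalence map $\phi(x) = \beta(s(x),P)\,\alpha(s(x),P)^{-1}$ (the paper carries the conjugating element $l$ through rather than first reducing to $\rho_\alpha = \rho_\beta$, but this is purely cosmetic). Your added remarks on Borel measurability and automatic continuity are a reasonable gloss that the paper leaves implicit.
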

\begin{proof}
Fix a Borel section $s:H/P \to H$ with $s(P)=\id$. Given a morphism $\rho:P \to G'$, the map given by 
\begin{equation}\label{eq.morphism.to.cocycle}
\alpha(h_1,hP)=\rho(s(h_1hP)^{-1}h_1s(hP))    
\end{equation}
is a cocycle whose restriction to $P \simeq P \times \{P\}$ recovers $\rho:P \to G'$. This shows that the map is surjective.

Note that if two cocycles $H \times H/P \to G'$ are equivalent, then it is clear that the morphisms $P \to G'$ that they induce are conjugate by an element of $G'$. In particular, the map $\alpha \mapsto \rho_\alpha$ descends to a (surjective) map on the equivalence classes of cocycles.

To show that this map is a bijection, let $\alpha$ and $\beta$ be two cocycles $H \times H/P \to G'$ and suppose that $\rho_\alpha(.)=l\rho_\beta(.)l^{-1}$ for some element $l \in G'$. By direct computation, we have
\begin{equation}\label{eq.section.inout}
\alpha(s(h_1hP),P)^{-1}\alpha(h_1,hP)\alpha(s(hP),P)=\rho_\alpha(s(h_1hP)^{-1}h_1s(hP))^{-1}.
\end{equation}
Writing the equivalent of \eqref{eq.section.inout} for $\beta$ (substituting $\rho_\beta$ for $\rho_\alpha$) and combining it with \eqref{eq.section.inout}, we get
$$
\alpha(h_1,hP)=\alpha(s(h_1hP), P)l^{-1}\beta(s(h_1hP),P)^{-1}\beta(h_1,hP)\beta(s(hP), P)l\alpha(s(hP),P)^{-1}.
$$
This shows that $\alpha \sim \beta$ via the fibre automorphisms given by the map $$hP \mapsto \beta(s(hP), P)l\alpha(s(hP), P)^{-1}$$ which proves the claim.
\end{proof}

We say that a cocycle $\alpha: H \times H/P \to S$ is of morphism-type with morphism $\rho$ if there exists a Borel morphism $\rho:H \to S$ such that $\alpha(h_1,hP)=\rho(h_1)$ for every $h_1,h \in H$.
We can now give proof of the decomposability criterion. 

\begin{proof}[Proof of Proposition \ref{prop.decomposable}]
Choose a Borel section $s: G/Q \to G$ with $s(Q)=\id$.
Let $\tilde\alpha:G \times G/Q \to Q$ be the associated cocycle. Recall $\pi_1$ is the quotient map $Q\to S=Q/R_0$. The associated morphism $\rho_{\tilde\alpha}:Q \to Q$ is then the identity map and hence the map $\pi_1\circ \rho_{\tilde\alpha}:Q_H\to Q\to S$ extends to a morphism $\tau:H \to S$ by assumption. We thus get a morphism-type cocycle $\beta:H \times H/Q_H \to S$ by taking $\beta(h,c)=\tau(h)$. By Lemma \ref{lemma.cocycle.bijection}, the cocycle $\alpha:=\pi_1\circ \tilde\alpha$ restricted to $H \times H/Q_H$ and $\beta$ are equivalent. Since equivalent cocycles induce isomorphic bundles, we are done.
\end{proof}

\subsection{Skew-product systems and stationary measures}\label{subsec.stat.measures}

Let $H^\Z$ be the set of two-sided sequences of elements of $H$. We denote an element (bi-infinite word) of $H^\Z$ by $w=(b,a)$ where, by convention, we consider $b=(\ldots, b_{-2},b_{-1}) \in H^{-\N^\ast}$ and $a=(a_0,a_1,\ldots) \in H^\N$. The sequence $b$, also denoted $w^-$ will be referred to as the past of $w$ and $a$, also denoted $w^+$, as the future of $w$. We denote by $T$ the shift map on $H^\Z$ taking one step forward to future, i.e.~ $T(b,a)=(ba_0,Ta)$, where $ba_0$ is the concatenation $(\ldots, b_{-1},a_0)$ and $Ta$ is the image of $a$ under the usual shift map, also denoted $T$, on $H^\N$. Accordingly, the inverse of $T$ is given by $T^{-1}(b,a)=(T^{-1}b,b_{-1}a)$, where $T^{-1}b=(\ldots, b_{-2})$.

Let $\mu$ be a probability measure on $H$ and $\nu$ a $\mu$-stationary probability measure on a locally compact and second countable $H$-space $Y$. It follows from the martingale convergence theorem that the limit as $n\to \infty$ of $b_{-1}\ldots b_{-n} \nu$ exists for $\mu^{\Z}$-almost every $w$; it will be denoted by $\nu_{w}$, or sometimes $\nu_b$. These limit measures satisfy a key equivariance property which says that for $\mu^{-\N^\ast}$-a.e. $b \in H^{-\N^\ast}$, we have $\nu_{T^{-1}b}=b_{-1}^{-1}\nu_b$ or equivalently, for $\mu^{-\N^\ast}$-a.e. $b \in H^{-\N^\ast}$ and $\mu$-a.e. $a_0 \in H$, we have $\nu_{ba_{0}}=a_0\nu_b$.

Stationary measures can also be seen as part of invariant measures on skew-product systems. Let $\hat{Y}$ denote the product $H^\Z \times Y$ and  $\hat{T}$ the skew-shift given by $\hat{T}((b,a),y)=(T(b,a), a_0 y)$. A basic fact (see e.g.~ \cite[Chapter 2]{bq.book}) is that given a probability measure $\mu$ on $H$, any $\mu$-stationary measure on $Y$ gives rise to a $\hat{T}$-invariant measure on $\hat{Y}$ that projects onto $\mu^\Z$ on the $H^\Z$ factor: indeed given a $\mu$-stationary measure $\nu$, the measure 
$$\hat{\nu}=\int \delta_{w} \otimes \nu_{w} d\mu^\Z(w)$$ 
defines a $\hat{T}$-invariant measure. The measure $\hat{\nu}$ is $\hat{T}$-ergodic if and only if $\nu$ is $\mu$-ergodic \cite[Chapter 2.6]{bq.book}.

\subsubsection{Stationary measures on the flag varieties}\label{subsub.stat.meas.flag}

Let $H$ be a real semisimple linear Lie group and $P$ a parabolic subgroup. According to a fundamental result of Furstenberg \cite{furstenberg.boundary.theory} (generalized to the current form by Guivarc'h--Raugi \cite{guivarch-raugi} and Goldsheid--Margulis \cite{goldsheid-margulis}), for any Zariski-dense probability measure $\mu$ on $H$, there exists a unique $\mu$-stationary probability measure on $H/P$. We shall refer to this measure as the Furstenberg measure and denote it by $\overline{\nu}_F$.

Recall that a stationary probability measure $\nu$ is said to be \textbf{$\mu$-proximal} if the limit measures $\nu_b$ are Dirac measures $\mu^{-\N^\ast}$-a.s. We will also say that the $H$-action on a space $Y$ is \textbf{$\mu$-proximal} if for every $\mu$-stationary and ergodic probability measure $\nu$ on $Y$ is $\mu$-proximal. For a Zariski-dense probability $\mu$, the Furstenberg measure (and hence the $H$ action on $H/P$) is $\mu$-proximal.

If $H$ acts $\mu$-proximally on a space $Y$, then every $\mu$-stationary probability measure $\nu$ induces a boundary map $w=(b,a) \mapsto \xi(w)=\xi(b)$ defined $\mu^{\Z}$-a.s.\ satisfying $\nu_w=\delta_{\xi(w)}$ and the equivariance property $b_{-1}\xi(T^{-1}w)=\xi(w)$. Conversely, a boundary map $\xi$ with the last equivariance property induces a $\mu$-proximal stationary probability measure. We will use the shorthand $P_{\mu}^{\erg}(Y)$ to denote the set of $\mu$-stationary and ergodic probability measures on $Y$.

\subsubsection{Limit measures on the fibre}\label{subsub.fibre.measures}
Let $Y_0$ and $Y=Y_0 \times F$ be $H$-spaces such that the projection $Y \to Y_0$ is $H$-equivariant. Let $\nu$ be a $\mu$-stationary probability measure on $Y$ such that its projection $\overline{\nu}$ on $Y_0$ (which is also automatically $\mu$-stationary) is $\mu$-proximal. Then, that for $\mu^{-\N^\ast}$-almost every $b$, the limit measure $\nu_{b}$ on $Y \simeq Y_0 \times F$ is of the form $\delta_{\xi(b)}\otimes \tilde{\nu}_{b}$, where $\xi: H^{-\N^\ast} \to Y_0$ is a measurable equivariant map (i.e.~ $\mu^{\Z}$-a.s. $\xi(ba_0)=a_0\xi(b)$) and $\tilde{\nu}_{w}=\tilde{\nu}_{b}$ is a probability measure on $F$.

\subsection{Measure classification for product systems with equivariant projections}\label{subsec.meas.class.sec2}

In the following result, we record, in a general setting, a description of stationary measures for actions on product spaces with equivariant projections on both factors. It is based on the Furstenberg decomposition of a stationary measure into its limit measures, i.e.~ $\nu=\int \nu_b d\mu^{-\N^\ast}(b)$.

\begin{proposition}\label{prop.decomposable.measure.class}
Let $H$ be a lcsc group, $Y_0$ and $F$ be lcsc $H$-spaces. Consider the $H$-action on $Y=Y_0 \times F$ for which both projections $Y \to Y_0$ and $Y \to F$ are $H$-equivariant. Let $\mu$ be a probability measure on $H$ such that the $H$-action on $Y_0$ or $F$ is $\mu$-proximal. Then, we have
$$
P_\mu^{\erg}(Y) \simeq P_\mu^{\erg}(Y_0)\times P_\mu^{\erg}(F).
$$
More precisely, the map
\begin{equation}\label{eq.bijection.map.decomposable}
\nu \mapsto (\overline{\nu},\nu^F)    
\end{equation}
is a bijection where the latters are, respectively, pushforwards of $\nu$ by the projections $Y \to Y_0$ and $Y \to F$.
\end{proposition}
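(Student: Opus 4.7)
The plan is to construct the inverse of the map $\nu \mapsto (\overline{\nu}, \nu^F)$ using the Furstenberg decomposition of a stationary measure into its limit measures, as recalled in \S\ref{subsec.stat.measures}. By the symmetric role of $Y_0$ and $F$ in the hypothesis, I may assume without loss of generality that the $H$-action on $Y_0$ is $\mu$-proximal, so that the limit measures of any ergodic $\overline{\nu} \in P_\mu^{\erg}(Y_0)$ take the form $\overline{\nu}_b = \delta_{\xi(b)}$ for an equivariant boundary map $\xi \colon H^{-\N^\ast} \to Y_0$.

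First I would verify that the forward map is well-defined: given $\nu \in P_\mu^{\erg}(Y)$, both projections are $\mu$-stationary by equivariance of the maps $Y \to Y_0$ and $Y \to F$, and they are ergodic since any non-trivial $\hat{T}$-invariant set on the skew-product extension of $Y_0$ (respectively $F$) pulls back to a non-trivial $\hat{T}$-invariant set for $\hat{\nu}$ on $H^\Z \times Y$, contradicting the ergodicity assumption. Next, for the inverse map, given $\overline{\nu} \in P_\mu^{\erg}(Y_0)$ and $\nu^F \in P_\mu^{\erg}(F)$, let $\{\nu^F_b\}$ denote the limit measures of $\nu^F$ and set
\[
\nu_b \,:=\, \delta_{\xi(b)} \otimes \nu^F_b, \qquad \nu \,:=\, \int \nu_b \, d\mu^{-\N^\ast}(b).
\]
The equivariance relations $\xi(ba_0) = a_0 \xi(b)$ and $\nu^F_{ba_0} = a_0 \nu^F_b$ yield $\nu_{ba_0} = a_0 \nu_b$, so $\{\nu_b\}$ is an equivariant family and hence, via the conditioning formalism of \S\ref{subsec.stat.measures}, $\nu$ is $\mu$-stationary with limit measures $\nu_b$. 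A direct Fubini computation then shows that the two projections of $\nu$ are indeed $\overline{\nu}$ and $\nu^F$.

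It remains to establish injectivity of the forward map and the ergodicity of the $\nu$ just built. For injectivity, by \S\ref{subsub.fibre.measures} any $\mu$-stationary $\nu$ on $Y$ with projection $\overline{\nu}$ has limit measures of the form $\delta_{\xi(b)} \otimes \tilde{\nu}_b$; projecting to $F$ gives $\nu^F = \int \tilde{\nu}_b \, d\mu^{-\N^\ast}(b)$, and the equivariance of $\tilde{\nu}_b$ identifies these with the limit measures $\nu^F_b$ of $\nu^F$ almost surely, so $\nu$ is uniquely determined by the pair $(\overline{\nu}, \nu^F)$. For ergodicity, I would observe that the map $\Phi \colon (w, y_0, f) \mapsto (w, f)$ is $\hat{T}$-equivariant and, thanks to $y_0 = \xi(w^-)$ a.s. under $\hat{\nu}$, admits the measurable inverse $(w, f) \mapsto (w, \xi(w^-), f)$ on the relevant conull sets; hence $\Phi$ is a $\hat{T}$-equivariant measure-space isomorphism between $(H^\Z \times Y, \hat{\nu})$ and $(H^\Z \times F, \widehat{\nu^F})$, transferring ergodicity of $\widehat{\nu^F}$ to $\hat{\nu}$. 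The main technical point to watch is the measurability and the handling of null sets when simultaneously invoking the equivariance of $\xi$ and of the fibre limit measures; modulo this, the argument is a direct application of the Furstenberg decomposition machinery.
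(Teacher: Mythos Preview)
Your proof is correct and follows essentially the same approach as the paper's: both use the Furstenberg decomposition together with $\mu$-proximality on one factor to write $\nu_b = \delta_{\xi(b)} \otimes (\nu^F)_b$, thereby establishing injectivity, and construct the inverse by the same formula. Your treatment of ergodicity of the constructed joining (via the explicit $\hat T$-equivariant isomorphism $\Phi$ with the skew product over $F$) is in fact more detailed than the paper's, which simply asserts that one ``readily checks'' it.
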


The assumption of proximality induces a certain disjointness between two factors; it is clear that without such an assumption the conclusion fails (e.g.~ if $Y_0$ and $F$ have a common non-trivial factor).

\begin{proof}
Note that since the projections $Y \to Y_0$ and $Y \to F$ are both $H$-equivariant, the pushforward measures $\overline{\nu}$ and $\nu^F$ are both $\mu$-stationary. Moreover, it is clear that if $\nu$ is ergodic, then so are $\overline{\nu}$ and $\nu^F$. Let us show that the map $P_\mu^{\erg}(Y) \ni \nu \to P_\mu^{\erg}(Y_0)\times P_\mu^{\erg}(F)$ given by $\nu \mapsto (\overline{\nu},\nu^F)$ yields the desired bijection.
 
Without loss of generality, let us suppose that the $H$-action on $Y_0$ is $\mu$-proximal and  show that the above map is injective. Given $\nu \in P_\mu^{\erg}(Y)$, since the projections to each factor commute with the $H$-action, we have $\mu^{-\N^\ast}$-a.s.\ $\overline{\nu_b}=(\overline{\nu})_b$ and $(\nu_b)^F=(\nu^F)_b$. Moreover, since the $H$-action on $Y_0$ is $\mu$-proximal and $\overline{\nu}$ is ergodic, there exists a boundary map $\xi:B \to Y_0$ such that $(\overline{\nu})_b=\delta_{\xi(b)}$ for $\mu^{-\N^\ast}$-a.s.~ 
$b \in H^{-\N^\ast}$. Therefore, the probability measure $\nu_b$ is given by $\delta_{\xi(b)} \otimes (\nu^F)_b$ and hence by the Furstenberg decomposition, we can recover the measure $\nu$ as $\nu=\int \delta_{\xi(b)} \otimes (\nu^F)_b d\beta(b)$. This shows that the map $\nu \mapsto (\overline{\nu},\nu^F)$ is injective.

Surjectivity does not use the $\mu$-proximality assumption and follows from the fact that both projections $\overline{\nu}$ and $\nu^F$ are $\mu$-stationary and ergodic. Indeed, one readily checks that $\int (\overline{\nu})_b \otimes (\nu^F)_b d\beta(b)$ is a $\mu$-stationary and ergodic probability measure on $Y$.
\end{proof}

\section{$\SL_2(\R)$-Zariski closure: measure classification}\label{sec.meas.class}

We now begin the main part on classifying stationary measures on homogeneous bundles over flag varieties. Following the scheme exposed in the Introduction, in \S \ref{sub.base.and.cases}, we start by distinguishing Case 1 (Dirac base) and Case 2 (Furstenberg base) according to the classification in the base followed by a precise description of various possibilities that occur (see Figure \ref{figure.cases}) in Case 2. In the rest, we focus on Case 2. \S \ref{subsec.case.2.1} treats the trivial fibre case. In \S \ref{subsec.case.2.2.diag}, we treat the diagonal fibre action case and prove Theorem \ref{thm.measure.class.geod}. Finally, in \S  \ref{subsec.remaining.case}, we prove Theorem \ref{thm.irreducible.H.decompsable} and provide an example for Case 2.3.b.


\subsection{The setting, classification on the base and the cases}\label{sub.base.and.cases}
Let us start by recalling the notations from the introduction. Let $G$ be a semisimple real Lie group with finite centre and $Q<G$ a parabolic subgroup. Let $R_0 \unlhd Q$ be a normal algebraic subgroup and $R<Q$ be a closed subgroup containing $R_0$ such that $S:=Q/R_0$ is semisimple with finite centre and without compact factors and $\Lambda:=R/R_0$ is a discrete subgroup of $S$. We denote by $X$ the quotient space $G/R$. As explained in the introduction, the space $X$ has a natural $G$-equivariant projection to $G/Q$ endowing it with a fibre-bundle structure over the flag variety $G/Q$ with fibres given by copies of $S/\Lambda$.

A convenient way to find such subgroup $R_0 \unlhd Q$ as above is by considering the  refined Langlands decomposition $Q=S_Q E_QA_QN_Q$ of $Q$ (see e.g.~ \cite[VII,7]{knapp.book}), where $S_Q$ is a semisimple subgroup of $Q$ without compact factors, $E_Q$ is a compact subgroup commuting to $S_Q$, $A_Q$ is a maximal $\R$-split diagonalizable subgroup commuting with $S_QE_Q$, and $N_Q$ is the unipotent radical of $Q$. One can then take the normal subgroup $R_0$ of $Q$ to be of the form $S'_QE_Q A_Q N_Q$ where $S'_Q$ is a simple factor of $S_Q$ or the trivial group.

Let $\mu$ be a probability measure on $G$ with finite first moment, $\Gamma_\mu$ the closed semigroup generated by the support of $\mu$ and suppose that the Zariski-closure $\overline{\Gamma}_\mu^Z$ of $\Gamma_\mu$ is a copy of either $\PGL_2(\R)$ or $\SL_2(\R)$ in $G$. We will denote this Zariski closure by $H$. On our way to establishing a description of $\mu$-stationary probability measures $\nu$ on $X$, we remark that a measure $\nu$ on $X$ determines, and in turn is determined by, its projection on $G/Q$ via $\pi: X \to G/Q$ and the fibre measures of this projection. Therefore, we proceed by first discussing the possible $\mu$-stationary measures on the base $G/Q$.

\subsubsection{Stationary measures on the base}\label{subsub.stat.meas.base}
Since the projection $X \to G/Q$ is $G$-equivariant, any $\mu$-stationary probability measure $\nu$ on $X$ projects down to $\mu$-stationary probability measure $\overline{\nu}$ on the base $G/Q$. The description of stationary measures on the base is handled by Guivarc'h--Raugi \cite{guivarch-raugi} and by Benoist--Quint \cite{BQ.compositio}. We have the following.

\begin{lemma}\label{lemma.base.cases}
There exists a bijection between $\mu$-stationary and ergodic probability measures on $G/Q$ and compact $H$-orbits on $G/Q$.
\end{lemma}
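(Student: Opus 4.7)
The plan is to combine an orbit analysis of the $H$-action on $G/Q$ with the known measure classification results on flag varieties. First, I would construct a canonical map from compact $H$-orbits to $P_\mu^{\erg}(G/Q)$ and verify its injectivity; then I would invoke Benoist--Quint \cite{BQ.compositio} to show it is surjective.

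\emph{From compact orbits to stationary measures.} For $x = gQ \in G/Q$, the stabilizer $H_x = H \cap gQg^{-1}$ is a real algebraic subgroup of $H\simeq \SL_2(\R)$ or $\PGL_2(\R)$. If the orbit $Hx$ is compact, then $H_x$ is a cocompact real algebraic subgroup of $H$, hence either equal to $H$ itself (so that $Hx$ is a fixed point) or to a parabolic (Borel) subgroup $Q_H$ of $H$, in which case $Hx \simeq H/Q_H$ is a projective line. Each compact $H$-orbit $\calC$ accordingly carries a canonical $\mu$-stationary probability measure: the Dirac measure when $\calC$ is a fixed point, and, when $\calC \simeq H/Q_H$, the unique Furstenberg probability measure on $\calC$ produced by Guivarc'h--Raugi \cite{guivarch-raugi} from the Zariski-density of $\mu$ in $H$. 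Both are $\mu$-ergodic (indeed proximal), and since distinct compact orbits are disjoint closed sets, the assignment $\calC \mapsto \nu_\calC$ defines an injection into $P_\mu^{\erg}(G/Q)$.

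\emph{Surjectivity and main obstacle.} For the reverse direction, given $\nu \in P_\mu^{\erg}(G/Q)$, I would invoke the Benoist--Quint classification \cite{BQ.compositio}, which, for Zariski-dense probability measures on a real semisimple Lie group acting on a real algebraic variety, identifies ergodic $\mu$-stationary probability measures with those supported on a single compact orbit. Applied to the $H$-action on the real algebraic variety $G/Q$ (via the inclusion $H \hookrightarrow G$), this forces $\nu$ to be supported on one of the compact $H$-orbits from the previous step; the uniqueness established above then identifies $\nu$ with the corresponding $\nu_\calC$, closing the bijection. The main obstacle is precisely this surjectivity step: ruling out $\mu$-stationary mass on non-compact $H$-orbits. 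This rests on the contraction/expansion behaviour of Zariski-dense random walks in $H$ together with the fact that the real algebraic $H$-orbits in $G/Q$ form a finite stratification by locally closed submanifolds, so that non-compact orbits are escape sets for the random walk and cannot carry their own $\mu$-stationary probability measure.
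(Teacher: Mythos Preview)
Your proposal is correct and follows essentially the same approach as the paper: both directions rely on the Benoist--Quint classification \cite{BQ.compositio}, with the paper being slightly more explicit in first using the Chacon--Ornstein ergodic theorem to produce a generic point $x$ and the local closedness of algebraic orbits to confine $\nu$ to a single $H$-orbit before invoking \cite[Proposition 5.5]{BQ.compositio}, whereas you appeal to the general algebraic-variety statement directly. The content is the same.
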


\begin{proof}
It is clear that any compact $H$-orbit carries a $\mu$-stationary and ergodic probability measure. Conversely, let $\nu$ be a $\mu$-stationary and ergodic probability measure. By Chacon--Ornstein ergodic theorem, there exists $x \in G/Q$ such that $\frac{1}{N} \sum_{k=1}^N \mu^{\ast k} \ast \delta_x \to \nu$ as $N \to \infty$. So, in particular, $\nu$ is supported in the compact $\overline{\Gamma_\mu x}$. Since $G/Q$ is a flag variety, the orbit $Hx$ is locally closed (see e.g.~\cite[Theorem 3.1.1]{zimmer.book}). Hence the compact $\overline{\Gamma_\mu x}$ is contained in $Hx$. Moreover, since $Hx$ is locally compact, up to conjugating $Q$, it is ($H$-equivariantly) homeomorphic to $H/Q_H$ where $Q_H=H \cap Q$. It now follows from \cite[Proposition 5.5]{BQ.compositio} that $Q_H$ is cocompact in $H$ and $\nu$ is the unique $\mu$-stationary and ergodic probability measure supported in $H/Q_H \simeq Hx$. This concludes the proof.
\end{proof}

It follows from this result that there are two types of $\mu$-stationary and ergodic probability measures on the base $G/Q$. The first type, which we will refer to as \textbf{Case 1}, is Dirac measures. This happens if and only if $H$ is contained in a conjugate of $Q$. The second type (\textbf{Case 2}) is the Furstenberg measure supported in a copy of $H/P$ in $G/Q$ where $P$ is a parabolic subgroup of $H$. This happens if and only if $H$ intersects a conjugate of $Q$ in a parabolic subgroup. Note that both types of stationary measures can be simultaneously present in a $G$-homogeneous bundle $X$.

Our study will primarily concern the analysis of stationary probability measures falling in Case 2. Indeed, as we now discuss, Case 1 is handled precisely by the seminal works of Benoist--Quint \cite{BQ1,BQ2} and  Eskin--Lindenstrauss \cite{eskin-lindenstrauss.long}.

\subsubsection{Case 1: Dirac base}\label{subsub.dirac.base}\label{subsub.dirac.base}

Let us observe that the results of Benoist--Quint \cite{BQ1,BQ2} and Eskin--Lindenstrauss \cite{eskin-lindenstrauss.long} imply the following.

\begin{proposition}
Keep the setting above and let $\nu$ be a $\mu$-stationary and ergodic probability on $X$ whose projection onto $G/Q$ is a Dirac measure. Then the fibre measure $\nu^F$ of $\nu$ on $(Q/R_0)/(R/R_0)\simeq S/\Lambda$ is homogeneous.
\end{proposition}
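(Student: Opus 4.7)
The plan is to reduce this statement to the Benoist--Quint / Eskin--Lindenstrauss measure classification theorems for semisimple actions on quotients by discrete subgroups.

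First, I would exploit the fact that $\overline{\nu}=\delta_{x_0}$ for some $x_0 \in G/Q$. Since $\overline{\nu}$ is $\mu$-stationary and concentrated at a single point, $x_0$ must be fixed by $\Gamma_\mu$, hence by its Zariski closure $H$. Equivalently, up to replacing $Q$ by an appropriate conjugate, $H \subseteq Q$, and $\nu$ is supported on the fibre $\pi^{-1}(x_0) \simeq Q/R$. In the standard trivialisation from \S\ref{subsec.cocycles}, we therefore have $\nu = \delta_{x_0}\otimes \nu^F$ with $\nu^F$ a probability measure on $Q/R \simeq S/\Lambda$. The action of $H$ on this fibre factors through the epimorphism $\pi_1\colon Q \twoheadrightarrow S=Q/R_0$, so writing $H':=\pi_1(H)\subseteq S$ and $\mu':=(\pi_1)_\ast \mu$, the measure $\nu^F$ is $\mu'$-stationary and ergodic on $S/\Lambda$ (any non-trivial decomposition of $\nu^F$ into stationary pieces would lift to one of $\nu$), and $\mu'$ inherits finite first moment from $\mu$.

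Next, I would analyse $H'$. Since $H$ is isomorphic to $\SL_2(\R)$ or $\PGL_2(\R)$, inspection of normal subgroups shows that $H' = \pi_1(H)$ is either trivial or isomorphic to $\SL_2(\R)$, $\PSL_2(\R)$, or $\PGL_2(\R)$. If $H'=\{e\}$, the $\mu$-action on the fibre is trivial, every probability measure on $S/\Lambda$ is $\mu'$-stationary, and ergodicity forces $\nu^F$ to be a Dirac mass, which is homogeneous (supported on a single-point closed orbit of its stabilizer in $S$). In the non-trivial case, $H'$ is a semisimple subgroup of $S$ without compact factors (as $S$ itself has none) in which $\mu'$ is Zariski-dense with finite first moment, and $\nu^F$ is a $\mu'$-stationary and ergodic probability measure on $S/\Lambda$. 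This is precisely the setting of the measure classification theorems of Benoist--Quint \cite{BQ2} (when $\Lambda$ is a lattice in $S$) and of Eskin--Lindenstrauss \cite{eskin-lindenstraus.short,eskin-lindenstrauss.long} (for the general discrete case), whose conclusion is that every such $\mu'$-stationary ergodic measure is homogeneous. Applying the appropriate one yields homogeneity of $\nu^F$.

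The proof is essentially a reduction, and the main step that requires care is the first one: identifying the fibre $H$-equivariantly with $S/\Lambda$ so that the $H$-action on it is realised as a Zariski-dense action of a semisimple group through the algebraic quotient $Q\twoheadrightarrow S$, and verifying that finite first moment, Zariski density in a semisimple subgroup without compact factors, and ergodicity are all transferred to $(\mu',\nu^F)$. Once this bookkeeping is done, there is no genuine obstacle intrinsic to the proposition; all the depth is absorbed into the cited classification results.
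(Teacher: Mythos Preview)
Your proposal is correct and follows essentially the same approach as the paper's proof: conjugate $Q$ so that $H\subseteq Q$, identify $\nu$ with its fibre measure on $S/\Lambda$ via the quotient $Q\to S$, and invoke the Benoist--Quint/Eskin--Lindenstrauss classification. You are slightly more careful than the paper in separately handling the possibility that $H':=\pi_1(H)$ is trivial (where ergodicity forces a Dirac mass), and in spelling out that stationarity, ergodicity, and the moment condition transfer to $\mu'$; the paper simply asserts these and cites \cite[Theorem~1.3]{eskin-lindenstrauss.long}. One minor remark: your justification ``without compact factors (as $S$ itself has none)'' is not quite the right reason --- subgroups of groups without compact factors can be compact --- but since $H'$ is a quotient of $\SL_2(\R)$ or $\PGL_2(\R)$ by a finite normal subgroup, it has no compact factors regardless.
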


This result follows from a direct application of \cite[Theorem 1.3]{eskin-lindenstrauss.long} which extends the main measure classification results of Benoist--Quint \cite{BQ2} with regards to the moment assumption and the fact that the group $\Lambda$ is only required to be discrete.

\begin{proof}
By assumption, the projection $\overline{\nu}$ of $\nu$ is a Dirac $\delta_{gQ}$ on $G/Q$. Replacing if necessary $Q$ by a conjugate, we can suppose $H<Q$ and that $g=\id$. The fibre of the map $X \to G/Q$ above $\id Q$ identifies $Q$-equivariantly with $Q/R \simeq (Q/R_0)/(R/R_0)$. Identifying $\mu$ with its image in $HR_0/R_0<Q/R_0$, the measure $\nu^F$  is  $\mu$-stationary and ergodic. Therefore, it follows from \cite[Theorem 1.3]{eskin-lindenstrauss.long} that $\nu^F$ is a homogeneous measure.
\end{proof}

\subsubsection{Case 2: Furstenberg base}\label{subsub.furstenberg.base}

The rest of this section is devoted to the analysis of the remaining case, i.e.~ the description of a  $\mu$-stationary and ergodic probability on $X$ whose projection to $G/Q$ is non-atomic (and which is consequently the Furstenberg measure on a copy of $H/P$ in $G/Q$, where $P$ is a parabolic subgroup of $H$, see Lemma \ref{lemma.base.cases}). In this case $H$ intersects a conjugate $gQg^{-1}$ of $Q$ in a parabolic subgroup. By conjugating $Q$ if necessary, we can and will suppose that $g=\id$, i.e.~ $P=Q_H:=H \cap Q$ and $\nu$ lives in $\pi^{-1}(H/P)$. As before, the analysis of stationary measure will vary depending on the relative position of $H$ with respect to the parabolic group $Q$ within the ambient group $G$. We will distinguish three cases that will be dealt with separately in the following subsections.\\

\noindent \textbullet ${}$ \textit{Case 2.1}: Trivial fibre action. This is a trivial case that occurs when the parabolic subgroup $Q_H^o$ of $H$ is contained in $R_0$.\\[4pt] 
\textbullet ${}$ \textit{Case 2.2}: Diagonal fibre action. This is the case when $Q_H^o \cap R_0$ is a proper non-trivial subgroup of $Q_H^o$. As we shall see, this positioning gives rise to a situation where the $S$-valued cocycle describing the fibre action has values in a diagonal subgroup of $S$.\\[4pt]
\textbullet ${}$ \textit{Case 2.3}: This is the remaining case, i.e.~ the case where $Q_H^o \cap R_0$ is trivial. Interestingly, the analysis in this case depends on further properties $H$ with respect to $G$ that we will explain. Accordingly, our analysis will involve two subcases (\textit{2.3.a} and \textit{2.3.b}). In this paper, we will not be able to give the full description of stationary measures in the last of these two subcases.

\subsection{Case 2.1: Trivial fibre action}\label{subsec.case.2.1}

We express the description in this simple case in the following result.

\begin{proposition}\label{prop.trivial.fibre.measure.class}
Let the space $X$ and groups $G,Q,R_0,R,S\simeq Q/R_0,\Lambda\simeq R/R_0$ and $H$ be as defined before. Suppose that $Q_H^o$ is contained in $R_0$. Then, there exists a standard trivialization $X \simeq G/Q \times S/\Lambda$ such that any $\mu$-stationary and ergodic probability measure $\nu$ on $X_\mathcal{C}$ can be written as $\overline{\nu}_F \otimes \delta_{q\Lambda}$ a product of the Furstenberg measure $\overline{\nu}_F$ with a Dirac measure $\delta_{q\Lambda}$ for some $q \in Q$. 
\end{proposition}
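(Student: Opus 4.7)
The plan is to leverage the hypothesis $Q_H^o\subset R_0$ to show that all $H$-orbits in $X_\mathcal{C}$ are compact with a unique $\mu$-stationary probability measure each, and then identify this measure with a product in an appropriately chosen trivialization. The key structural fact is that the cocycle governing the fibre action has finite image.

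First, I would fix a standard trivialization $X_\mathcal{C}\simeq \mathcal{C}\times_\alpha S/\Lambda$ induced by a Borel section $s\colon G/Q\to G$ with $s(Q)=\id$, for which the cocycle is $\alpha(h,c)=\pi_1(s(hc)^{-1}h\,s(c))$. Since $s(hc)^{-1}h\,s(c)\in H\cap Q=Q_H$, the cocycle takes values in $\pi_1(Q_H)\subset S$. The hypothesis $Q_H^o\subset R_0=\ker\pi_1$ then forces this image to factor through $Q_H/Q_H^o$, which is finite because $Q_H$ is a parabolic subgroup of $H$ (we are in Case~2). Hence $\pi_1(Q_H)$ is a finite subgroup of $S$, and $\alpha$ is finite-valued.

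Next, I would deduce from this that every $H$-orbit in $X_\mathcal{C}$ is compact. For any $x\in X_\mathcal{C}$, the $H$-stabilizer $\Stab_H(x)$ lies between (a conjugate of) $Q_H^o$ and $Q_H$; as $Q_H$ is cocompact in $H$ (parabolic) and $[Q_H:Q_H^o]<\infty$, any intermediate subgroup is cocompact in $H$. Given a $\mu$-stationary ergodic $\nu$ on $X_\mathcal{C}$, the Chacon--Ornstein ergodic theorem yields $x\in X_\mathcal{C}$ with $\frac{1}{N}\sum_{k=1}^N\mu^{*k}*\delta_x\to\nu$, so $\nu$ is supported on the compact orbit $\overline{Hx}=Hx\simeq H/\Stab_H(x)$. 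On this compact $H$-homogeneous space the Guivarc'h--Raugi theorem (applied as in Lemma~\ref{lemma.base.cases}) gives a \emph{unique} $\mu$-stationary probability measure, which therefore equals $\nu$.

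To put $\nu$ in product form, I would examine the support of $\nu$ inside $X_\mathcal{C}\simeq \mathcal{C}\times S/\Lambda$. Writing a representative as $x=qR$ with $q\in Q$, the orbit $Hx$ sits over the fibre subset $\pi_1(Q_H)\pi_1(q)\Lambda$, a finite set in $S/\Lambda$. Because $\alpha$ is finite-valued and the projection of $\nu$ to $\mathcal{C}$ is $\overline{\nu}_F$, the disintegration of $\nu$ over $\mathcal{C}$ produces, for $\overline{\nu}_F$-a.e.\ $c$, a fibre measure supported in this finite orbit; by ergodicity and uniqueness of the stationary measure on $Hx$, choosing the trivialization so that $\Stab_H(x)=Q_H$ (equivalently, $\pi_1(q)^{-1}\pi_1(Q_H)\pi_1(q)\subset\Lambda$) makes $Hx$ coincide with the section $\mathcal{C}\times\{q\Lambda\}$, giving $\nu=\overline{\nu}_F\otimes\delta_{q\Lambda}$. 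The main technical step is this last identification, i.e.\ ensuring that the finite-image cocycle $\alpha$ allows each supporting orbit to be realized as a section in the chosen standard trivialization; this is where the flexibility in picking the base point in the fibre (and thus the value of $q\in Q$) is crucial.
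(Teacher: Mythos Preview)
Your argument takes an unnecessarily roundabout path and has a genuine gap in its final paragraph. The paper's proof is a two-line argument that you miss: since $Q_H$ is a parabolic subgroup of $H$, it is \emph{Zariski-connected} as an algebraic group, so $Q_H^o$ is Zariski-dense in $Q_H$. Because $R_0$ is algebraic (hence Zariski-closed), the hypothesis $Q_H^o\subset R_0$ immediately upgrades to $Q_H\subset R_0$. Thus the induced morphism $\rho_\alpha\colon Q_H\to S$ is \emph{trivial}, not merely finite-image. By Proposition~\ref{prop.decomposable} the action on $X_\mathcal{C}$ is decomposable with trivial morphism, i.e.\ in some standard trivialization $h(c,f)=(hc,f)$. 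The product form $\nu=\overline{\nu}_F\otimes\delta_{q\Lambda}$ is then immediate, with a single trivialization working for all $\nu$.

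Your approach, treating $\alpha$ as only finite-valued, runs into trouble at the end. The statement requires \emph{one} standard trivialization in which \emph{every} ergodic stationary $\nu$ has product form; your phrase ``choosing the trivialization so that $\Stab_H(x)=Q_H$'' makes the trivialization depend on the orbit $Hx$, hence on $\nu$. If the cocycle genuinely had nontrivial finite image (which, as above, it does not), then in a fixed trivialization some orbits $Hx$ would not be sections $\mathcal{C}\times\{f\}$, and your identification would fail. Secondly, your appeal to Guivarc'h--Raugi for uniqueness of the stationary measure on $Hx\simeq H/P'$ with $Q_H^o\subset P'\subset Q_H$ needs justification: Lemma~\ref{lemma.base.cases} is stated for flag varieties, whereas $H/P'$ may be a finite cover of one, and uniqueness on covers is not automatic without further argument.
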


\begin{proof}
Start by noting that since $Q_H$ is a (Zariski) connected algebraic group, $Q_H^o<R_0$ implies that $Q_H<R_0$. Now fix any standard trivialization $X_\mathcal{C} \simeq H/Q_H \times S/\Lambda$, let $\beta$ be the associated cocycle. The hypothesis $Q_H<R_0$ then entails that the associated morphism $\rho_\beta:Q_H \to S$ has trivial image. In particular $\rho_\beta$ extends trivially to a morphism $H \to S$ and hence by Proposition \ref{prop.decomposable}, the $H$-action on $X_\mathcal{C}$ is decomposable.
Therefore there exists a standard trivialization $X_\mathcal{C} \simeq H/Q_H \times S/\Lambda$ for which the associated cocycle is morphism-type with trivial morphism. The result follows.
\end{proof}

Here are two examples where the trivial fibre action situation arises. 

\begin{example}\label{ex.case.2.1}
1. (Trivial example) Let $G=\SL_4(\R)$, $Q$ be the minimal parabolic subgroup preserving the standard full flag in $\R^4$. Let $H$ be the copy of $\SL_2(\R)$ on the top-left corner, i.e.~ acting on the plane generated by the standard basis elements $e_1$ and $e_2$. In this case, $R$ is necessarily equal to $R_0$ which is $Q$ itself.

2. Let $G=\SL_4(\R)$, $Q$ be the parabolic subgroup stabilizing the plane generated by the first two vectors $e_1,e_2$ of the standard basis of $\R^4$, $H$ be the reducible representation given by the sum of the standard representation of $\SL_2(\R)$ on the planes generated by the basis vectors $e_1,e_4$ and $e_2,e_3$;
$$
Q= \left \{ \begin{pmatrix}
\ast & \ast & \ast & \ast \\
\ast & \ast & \ast & \ast \\
0 & 0 & \ast & \ast \\
0 & 0 & \ast & \ast \\
\end{pmatrix}
\right \}, \quad  \quad H = \left \{ \begin{pmatrix}
a & 0 & 0 & b \\
0 & a & b & 0 \\
0 & c & d & 0 \\
c & 0 & 0 & d \\
\end{pmatrix} | \begin{pmatrix}
a & b \\
c & d 
\end{pmatrix} \in \SL_2(\R)
\right \}.
$$
We can take $R$ to be the group generated by $R_0$ which is the solvable radical of $Q$ and $\SL_2(\Z) \times \SL_2(\Z)$ acting in the standard way on the planes generated by $e_1,e_2$ and $e_3,e_4$. 
\end{example}

\begin{remark}
    Case 1 and Case 2.1 work more generally and we only need the assumption that $H$ is a real semisimple linear Lie group.
\end{remark}

\subsection{Case 2.2: Diagonal fibre action}\label{subsec.case.2.2.diag}

The main goal of this part is to prove Theorem \ref{thm.measure.class.geod}. We start by discussing an example to which this result applies.



\begin{example}\label{ex.reducible}
We start by recalling Example \ref{ex.ss}. Let $X$ be the space of $2$-lattices in $\R^3=V$ up to homotheties of $V$. The space $X$ with its natural topology admits a continuous transitive action of $G=\SL_3(\R)$. Let $y_0<V$ be a copy of $\R^2$ generated by $e_1,e_2$, where $e_i$'s denote the standard base elements of $V$ and $x_0$ be the class of $\Z^2$ in $y_0$, $R$ its stabilizer in $G$ and $Q$ the parabolic subgroup of $G$ stabilizing $y_0$. Note that the connected component $R_0$ of $R$ is the solvable radical of $Q$ and we have $R<Q$ so that the space $X$ is a $G$-homogeneous bundle over $G/Q$ with fibres $Q/R \simeq \PGL_2(\R)/\PGL_2(\Z)$. Let $\pi:X \to G/Q$ denote the natural projection associating to a class of $2$-lattice the $2$-plane that it generates.

Let $H$ be a copy of $\SL_2(\R)$ given by the classes of matrices of the form $ \begin{pmatrix}
1 & 0 & 0 \\
0 & a & b \\
0 & c & d
\end{pmatrix}$, where $ \begin{pmatrix}
a & b \\
c & d
\end{pmatrix} \in \SL_2(\R)$. 
This configuration falls into Case 2.2: indeed, $Q_H=H \cap Q$ is a parabolic subgroup of $H$ and $Q_H^\circ \cap R_0$ is the unipotent radical of $Q_H^\circ$.

One can also see in explicitly how the one-dimensional split subgroup of $S$, for which Theorem \ref{thm.measure.class.geod} proves invariance, appears: let $\mathcal{C}$ be the $H$-invariant circle of 2-planes given by $\langle e_1, t e_2+se_3 \rangle$ for $t,s \in \R$ and $X_{\mathcal{C}}$ be the bundle over $\calC$ given by the closed subset of $X$ given by $2$-lattices contained in subspaces belonging to $\mathcal{C}$. We can choose an explicit Borel section $s:H/Q_H \to H$ as follows to obtain a standard trivialization of $X_\mathcal{C}$: given a vector space $y=\langle e_1, t e_2+se_3 \rangle \in \mathcal{C} \simeq H/Q_H$, we can associate the class of the matrix $R_{\theta(y)}:=\begin{pmatrix}
\cos \theta(y) & -\sin \theta(y) \\
\sin \theta(y) & \cos \theta(y)
\end{pmatrix}
$ with $\theta(y) \in [0,\pi)$ chosen so that $R_{\theta(y)}$ seen in $H$ sends $y_0$ on $y$. 
The resulting trivialization writes as 
\begin{equation*}
\begin{aligned}    
X_\mathcal{C} &\simeq H/Q_H \times F\\
x &\mapsto (\pi(x), R_{\theta(\pi(x))}^{-1}(x)).
\end{aligned}
\end{equation*}
A straightforward calculation shows that 
the $S \simeq \PGL_2(\R)$-valued cocycle $H \times H/Q_H \to S$ given by this trivialization takes values in the full diagonal subgroup $D^{\pm}<\PGL_2(\R)$ and coincides with the Iwasawa cocycle of $H$ up to a sign, which will be defined in the following section. It follows then from Theorem \ref{thm.measure.class.geod}, Remark \ref{rk.conversely.to.diagonal.thm} and uniqueness of $\mu$-stationary measure on $H/Q_H$ that there is a bijection between diagonal-flow (or an index-two extension of it) invariant probability measures on $\PGL_2(\R)/\PGL_2(\Z)$ and $\mu$-stationary probability measures on $X_\mathcal{C}$. The difference between diagonal invariance and the index-two extension is a minor one related to the sign group. This is discussed further below in \S \ref{subsub.iwasawa.sign}.
Note finally that Case 1 also appears within this same example, namely the singleton corresponding to the two-plane generated by $\{e_2,e_3\}$ is $H$-invariant.
\end{example}

\bigskip

The rest of this Subsection \ref{subsec.case.2.2.diag} is devoted to the proof of Theorem \ref{thm.measure.class.geod}.


\subsubsection{Iwasawa cocycle and representation theory}\label{subsub.iwasawa}

Let $H$ denote either the group $\SL_2(\R)$ or $\PGL_2(\R)$. Let $K$ be a maximal compact subgroup of $H$ and $P$ be a minimal parabolic subgroup so that we have the decomposition $H=K^oP $. Let $D$ be the maximal connected diagonal subgroup of $P$, $N$ the unipotent radical of $P$ and $M=K \cap P$. Let $H/P$ be the flag variety of $H$. Given $h \in H$ and $\xi=kP \in H/P$, we denote by $\sigma(h,\xi)$ the unique element of $D$ such that
\begin{equation}\label{eq.iwasawa.characterizing}
hk \in K \sigma(h,\xi) N.    
\end{equation}
This map $\sigma: H \times H/P \to D$ defines a continuous cocycle (see \cite[Lemma 8.2]{bq.book}), called the \textit{Iwasawa cocycle}. The morphism $\rho_\sigma$ associated with the Iwasawa cocycle is simply the projection $P \to D \simeq P/MN$.

An alternative way, more in the spirit of Section \ref{sec.prelim}, to construct the Iwasawa cocycle is as follows. Consider $H$ as a fibre bundle over $H/P$ and let $s$ be a section $s:H/P \to H$ given by the Iwasawa decomposition, namely $s(kP)\in kM$ for $k \in K^\circ$.  We then get a trivialization $H \simeq H/P \times P$ (see \eqref{eq.trivialization}) and an associated cocycle $\tilde{\sigma}:H \times H/P \to P$ (see \eqref{eq.construct.cocycle}). It is not hard to verify that the cocycle obtained by composing $\tilde{\sigma}$ with the projection $P \to P/MN \simeq D$ satisfies the characterizing property \eqref{eq.iwasawa.characterizing} of the Iwasawa cocycle. Regarding the section $s$, for $\PGL_2(\R)$ case, we can define it canonically to have values in $K^\circ$. For $\SL_2(\R)$ case, we need to make a choice in $kM$ so that $s(kP)$ is a Borel section. Even though the cocycle $\tilde{\sigma}$ depends on $s$, by \eqref{eq.construct.cocycle}, since the ambiguity $M$ is in the centre, we know that Iwasawa cocycle does not depend on the choice of the value of $s(kP)$ in $kM$.


In the course of our proofs, sometimes it will be more convenient to switch to the additive notation for cocycles. Let $\mathfrak{d}$ be the Lie algebra of $D$. For a $D$-valued cocycle $\alpha$, we will denote by $\overline{\alpha}$, the $\mathfrak{d}$-valued cocycle obtained by composing $\alpha$ with the logarithm map $D \to \mathfrak{d}$.


Given an algebraic irreducible representation $\rho: H \to \GL(V)$ of $H$ in a finite dimensional real vector space $V$, for every character $\chi$ of $D$, the associated weight space is
$V^\chi=\{v \in V : \rho(a) v= \chi(a)v \; \; \text{for every} \; a \in D \}$. The set of characters $\chi$ for which $V^\chi \neq \{0\}$ is called the set of (restricted) weights of $(V,\rho)$ and denoted $\Sigma(\rho)$. For a character $\chi$ of $D$, we denote by $\overline{\chi}$ the corresponding additive character on $\mathfrak{d}$. The set $\Sigma(\rho)$ is endowed with an order: $\overline{\chi}_1 \geq \overline{\chi}_2$ if and only if $\overline{\chi}_1 - \overline{\chi}_2$ is a sum of positive roots of $H$ in $\mathfrak{d}$. $\Sigma(\rho)$ has a largest element $\chi$, called the highest weight of $\rho$. The corresponding eigenspace is the subspace $V^N$ of $N$-fixed vectors. Since $H$ is $\R$-split, this is a line in $V$. For an element $\eta=gP$ in the flag variety $H/P$, we denote by $V_\eta$ the line $gV^N$ in $V$ constructing a map $H/P \to \mathbb{P}(V)$.

The following lemma will be important for our considerations; it will allow us to control the Iwasawa cocycle.

\begin{lemma}\cite[Lemma 6.33]{bq.book}\label{lemma.iwasawa.norm}
Let $(V,\rho)$ be an algebraic irreducible representation of $H$ with the highest weight $\chi$. Then, there exists a $K$-invariant Euclidean norm $\|.\|$ on $V$ such that for every element $a \in D$, $\rho(a)$ is a symmetric endomorphism of $V$. Moreover, for every $\eta \in H/P$, non-zero $v \in V_\eta$ and $h \in H$, we have
$$
\overline{\chi}(\overline{\sigma}(h,\eta))=\log  \frac{\|\rho(h)v\|}{\|v\|}.
$$
\end{lemma}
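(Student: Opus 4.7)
The plan is to construct the Euclidean norm using the Cartan involution on $H$, then to read off the norm ratio on $V_\eta$ from the Iwasawa decomposition of $hk$. Let $\theta$ denote the Cartan involution of $H$, i.e.\ the involutive automorphism fixing $K$ pointwise and sending $a\mapsto a^{-1}$ on $D$, with associated $\pm 1$-eigenspace decomposition $\g = \Lie(K)\oplus\mathfrak{p}$ where $\Lie(D)\subset \mathfrak{p}$. By Weyl's unitarian trick, applied to the complexification $H_{\C}$ and the complexified representation $(V\otimes_{\R}\C,\rho_{\C})$, we obtain a $K$-invariant Euclidean inner product $\langle\cdot,\cdot\rangle$ on $V$ for which
$$
\rho(h)^{\ast} \;=\; \rho(\theta(h)^{-1}) \qquad \text{for all } h \in H,
$$
where $\rho(h)^{\ast}$ is the adjoint relative to this inner product; for our specific $H = \SL_2(\R)$ or $\PGL_2(\R)$, one may also verify the identity directly on symmetric powers of the standard representation (which, together with the action of the centre, exhaust the irreducible algebraic representations). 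Specialising to $a\in D$ gives $\rho(a)^{\ast} = \rho(\theta(a)^{-1}) = \rho(a)$, so each $\rho(a)$ is symmetric, as required.

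Once $\rho(D)$ consists of commuting self-adjoint operators, it is simultaneously orthogonally diagonalisable and the eigenspace decomposition coincides with the weight decomposition $V = \bigoplus_{\chi'\in\Sigma(\rho)} V^{\chi'}$. In particular the highest-weight line $V^N = V^\chi$ is one-dimensional; fix a unit vector $v_0 \in V^N$. For $\eta \in H/P$, write $\eta = kP$ with $k \in K^\circ$, so that $V_\eta = \rho(k)V^N$; any nonzero $v \in V_\eta$ has the form $v = c\,\rho(k)v_0$ with $\|v\| = |c|$ by $K$-invariance. Using the Iwasawa decomposition $hk = k' \sigma(h,\eta)\, n$ with $k' \in K^\circ$, $n \in N$, and the facts that $v_0$ is $\rho(N)$-fixed and is a $\chi$-eigenvector for $\rho(D)$, we compute
$$
\rho(h)v \;=\; c\,\rho(k')\rho(\sigma(h,\eta))\rho(n)v_0 \;=\; c\,\chi(\sigma(h,\eta))\,\rho(k')v_0 .
$$
Taking norms and applying $K$-invariance of $\|\cdot\|$ once more, $\|\rho(h)v\| = |c|\,\chi(\sigma(h,\eta))$, where $\chi(\sigma(h,\eta)) > 0$ because $\sigma(h,\eta)$ lies in the connected subgroup $D$. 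Taking logarithms yields the desired identity $\overline{\chi}(\overline{\sigma}(h,\eta)) = \log\bigl(\|\rho(h)v\|/\|v\|\bigr)$. The ambiguity in the choice of $k \in K^\circ$ with $kP = \eta$ lies in $M = K \cap P$, which acts on $V^N$ by at most a sign (trivially for $\PGL_2(\R)$, by $\{\pm 1\}$ for $\SL_2(\R)$), so the formula is independent of this choice.

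The main (minor) obstacle is the first step, namely producing a single $K$-invariant inner product for which the admissibility identity $\rho(h)^{\ast} = \rho(\theta(h)^{-1})$ also holds; $K$-invariance alone is not enough, but the unitarian trick (or an explicit construction on each symmetric power) supplies it. Once this is granted, the remainder is routine bookkeeping with the Iwasawa decomposition and the fact that the highest-weight line is the $N$-fixed line.
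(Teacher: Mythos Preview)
Your proof is correct and follows the standard route: construct a $K$-invariant inner product satisfying the admissibility relation $\rho(h)^*=\rho(\theta(h)^{-1})$ (so that $\rho(D)$ acts by symmetric operators and the weight spaces are orthogonal), then push a highest-weight vector through the Iwasawa decomposition $hk=k'\sigma(h,\eta)n$ using $N$-invariance of $V^N$ and $K$-invariance of the norm.

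Note, however, that the paper does not supply its own proof of this lemma; it simply quotes it from \cite[Lemma~6.33]{bq.book}. Your argument is essentially the same as the one given there (Benoist--Quint construct the good norm via the compact form and verify the identity in the same way). One very minor point: you write $k'\in K^\circ$, but the paper's defining relation \eqref{eq.iwasawa.characterizing} places $k'$ in $K$; this is immaterial for the norm computation since $\|\cdot\|$ is $K$-invariant, but it is worth being consistent with the paper's convention.
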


\subsubsection{Iwasawa cocycle, sign group and standard trivialization}\label{subsub.iwasawa.sign}

The goal of this part is to obtain a lemma (Lemma \ref{lemma.its.iwasawa} below) which, for a standard trivialization, expresses the action of $H$ on the fibres of the subbundle $X_{H/Q_H}$ of $X \to G/Q$ with the Iwasawa cocycle of the group $H$, up to a sign. 
Recall that in Case 2.2, $R_0 \cap Q_H^\circ$ is a non-trivial proper subgroup of $Q_H^\circ$. Since $R_0$ is normal in $Q$, $R_0 \cap Q_H^\circ$ is also normal in $Q_H^\circ$. It follows  that this intersection is the unipotent radical of $Q_H^\circ$.
Therefore the projection of $Q_H^\circ$ to $S$ given by $Q_H^\circ/(Q_H^\circ\cap R_0)$ is a connected split torus.
We will denote by $D$ the image of $Q_H^\circ$ in $S$ obtained by projection. Let $D^{\pm}$ be the algebraic $\R$-split torus containing $D$. Then $Q_H/(Q_H\cap R_0)$, the image of $Q_H$ in $S$, is contained in $D^{\pm}$. The group $D^{\pm}\simeq \R^*$ is isomorphic to $D\times(\Z/2\Z)\simeq \R_{>0}\times \{\pm 1\}$ in $S$. 
In order to treat the sign problem of the cocycle in $D^\pm$, we need to go to the two-fold cover space $K$ of $H/Q_H$ to recover the information of the sign. Here we need to distinguish two cases in a similar way for both $H \simeq \SL_2(\R)$ or $\PGL_2(\R)$.

For $H \simeq \SL_2(\R)$ case: Let $V=\R^2$. A convex cone in $V$ is called proper if it does not contain a line. From Iwasawa cocycle, or just from the action of $\SL_2(\R)$ on $\bbS\subset V$, we have a group action of $\SL_2(\R)$ on $K\simeq \bbS$. Guivarc'h and Le Page \cite[Proposition 2.14]{GLP} proved that if $\Gamma_\mu$ preserves a closed proper convex cone in $V$ then there exist two $\mu$-stationary and ergodic measures $\nu_1$ and $\nu_2$ on the circle $K$. The supports of these two measures are just the inverses of each other, and we denote them by $\Lambda_1$ and $-\Lambda_1$, respectively. Otherwise, there exists a unique $\mu$-stationary measure on $K$. We now distinguish two cases depending on the action of $\Gamma_\mu$ on $K$. 
\begin{itemize}
    \item Case 2.2.a: $\Gamma_\mu$ preserves a closed proper convex cone in $V$. In this case we take a section $s:K/M\to K$ such that $s$ takes values in a half circle containing $\Lambda_1$.
    \item Case 2.2.b: Otherwise. We just take a section $s:K/M\to K$. There is no better choice in this case.
\end{itemize}

For $H\simeq \PGL_2(\R)$ case: The maximal compact subgroup $K$ has two connected components and each component is isomorphic to $H/Q_H$. In this case, we take the section $s:K/M\to K$ in the connected component of $K$.
\begin{itemize}
    \item Case 2.2.a: If $\Gamma_\mu$ is inside the connected component $\PGL_2(\R)^\circ$. 
    \item Case 2.2.b: Otherwise. In this case, we have a unique $\mu$-stationary measure on $K$, which has weight $1/2$ on each connected component.
\end{itemize}

We mention that unlike other main cases (those appearing in Figure \ref{figure.cases}), Cases 2.2.a or b depend on $\Gamma_\mu$ rather than the Zariski closure $H$ itself.

For $H$ equal to either $\SL_2(\R)$ or $\PGL_2(\R)$, from now on we distinguish Case 2.2.a and Case 2.2.b, and choose a Borel section $s:H/Q_H\simeq K/M\to K<H$ as specified above. We define a sign function on $K$ by 
\[\sg(k):=k^{-1}s(kM)\in M\simeq \Z/2\Z. 
\]
We define a sign cocycle with respect to the section $s$ for $g\in H$ and $\eta\in K/M$ by
\[\sg(g,\eta):=\sg(k)\sg(k_g)=k^{-1}s(kM)k_g^{-1}s(k_gM), \]
where $k$ is a preimage of $\eta$ in $K$ and $k_g$ is the $K$-part of $gk\in k_g\sigma(g,k)N$ in the Iwasawa decomposition. The value of $\sg$ does not depend on the choice of preimage $k$. 

In Case 2.2.b, with this sign function, we can recover the sign in $D^\pm$ of the cocycle $\alpha$. Recall the quotient map $Q_H$ to $S$, whose image is $Q_H/(Q_H\cap R_0)<D^{\pm}$. If $Q_H/(Q_H\cap R_0)=D$, then there is no ambiguity about the sign. In the following, in order to simplify the notation, we suppose that we are in the case where $Q_H/(Q_H\cap R_0)=D^\pm$. The proof of the case $Q_H/(Q_H\cap R_0)=D$ is simpler; the sign cocycle disappears, or equivalently, it is constant with value identity.

\begin{lemma}\label{lem:sign}\label{lemma.its.iwasawa}
Under the above choice of the section $s$, for $g\in H$ and $\eta\in H/Q_H$, as an element in $D^\pm$, we have
\[\alpha(g,\eta)=(\sigma(g,\eta),\sg(g,\eta)). \]
In particular, for Case 2.2.a, when $\eta$ is in the support of the Furstenberg measure and $g\in\Gamma_\mu$, the cocycle $\alpha$ coincides with the Iwasawa cocycle.
\end{lemma}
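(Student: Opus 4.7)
My plan is to compute the $Q_H$-valued cocycle $\tilde\alpha(g,\eta):=s(g\eta)^{-1}gs(\eta)$ directly in terms of the Iwasawa decomposition and then push it down to $S=Q/R_0$. The key algebraic input is the Case 2.2 hypothesis $R_0\cap Q_H^\circ=R_u(Q_H^\circ)$, which, identifying $R_u(Q_H^\circ)$ with $N$, forces the unipotent part of $\tilde\alpha$ to vanish in $S$.

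To carry out the computation, I fix $\eta=kM\in K/M\simeq H/Q_H$ and write the Iwasawa decomposition $gk=k_g\sigma(g,\eta)n$ with $k_g\in K$, $n\in N$. From the definition $s(kM)=k\,\sg(k)$ and $s(k_gM)=k_g\,\sg(k_g)$, substitution yields
\[
\tilde\alpha(g,\eta)=\sg(k_g)^{-1}k_g^{-1}\cdot gk\cdot\sg(k)=\sg(k_g)^{-1}\sigma(g,\eta)n\sg(k).
\]
Since $M$ centralises $D$ and normalises $N$, this rearranges to $\bigl(\sg(k_g)^{-1}\sg(k)\bigr)\cdot\sigma(g,\eta)\cdot n'$ for some $n'\in N$. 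Projecting via $\pi_1$: the unipotent factor $n'\in N\subset R_0$ vanishes; $\sigma(g,\eta)$ maps to its own copy in $D\subset D^\pm\subset S$; and in the representative case $Q_H/(Q_H\cap R_0)=D^\pm$, to which the statement reduces, the $M$-factor projects to the sign component $\{\pm 1\}$ of $D^\pm$. Since $M\simeq\Z/2\Z$ is abelian of exponent two, $\sg(k_g)^{-1}\sg(k)=\sg(k)\sg(k_g)=\sg(g,\eta)$, which yields the first assertion $\alpha(g,\eta)=(\sigma(g,\eta),\sg(g,\eta))$.

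For the second assertion, it suffices to show $\sg(k)=\sg(k_g)=\id$ for $\eta$ in the support of $\bar\nu_F$ and $g\in\Gamma_\mu$. For $H\simeq\SL_2(\R)$, identifying $K\simeq S^1$ via $k\mapsto ke_1$, one verifies that the map $k\mapsto k_g$ on $K$ induced by Iwasawa corresponds to the $S^1$-action $v\mapsto gv/\|gv\|$; since $\Gamma_\mu$ preserves the proper cone generated by $\Lambda_1$, one gets $k\in\Lambda_1\Rightarrow k_g\in\Lambda_1$, and since $s$ was chosen to take values in a half circle containing $\Lambda_1$, both sign factors are trivial. For $H\simeq\PGL_2(\R)$, a determinant-sign reading of the Iwasawa decomposition (using $D,N\subset H^\circ$) yields $k_g\in K^\circ$ whenever $g\in\Gamma_\mu\subset H^\circ$ and $k\in K^\circ$; the conclusion follows since $s$ takes values in $K^\circ$. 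The only technical subtlety, rather than a conceptual obstacle, lies in the sign bookkeeping, namely identifying the image of $M$ under $\pi_1$ with the sign component of $D^\pm$ and matching the product $\sg(k_g)^{-1}\sg(k)$ with the definition of $\sg(g,\eta)$.
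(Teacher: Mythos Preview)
Your proof is correct and follows essentially the same approach as the paper's own proof: compute $\tilde\alpha(g,\eta)=s(g\eta)^{-1}gs(\eta)$ via the Iwasawa decomposition $gk=k_g\sigma(g,\eta)n$, use that $N\subset R_0$ to kill the unipotent part under $\pi_1$, and read off the $D$- and $M$-components as $\sigma(g,\eta)$ and $\sg(g,\eta)$ respectively; the second assertion is handled in both cases exactly as in the paper (preservation of $\Lambda_1$ for $\SL_2(\R)$, connected-component/determinant argument for $\PGL_2(\R)$). Your write-up is in fact more explicit than the paper's, which leaves the rearrangement and the identification $\sg(k_g)^{-1}\sg(k)=\sg(g,\eta)$ implicit.
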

\begin{proof}
By definition of the Borel section $s$ and cocycle $\alpha$, 
\[\alpha(g,\eta)=s(g\eta)^{-1}gs(\eta)R_0\in S=Q/R_0. \]
The Iwasawa cocycle is defined by
\[\sigma(g,\eta)=k_g^{-1}gkN. \]
Recall that $k$ is a preimage of $\eta$ in $K$ and $k_g$ is the $K$-part of $gk\in k_g\sigma(g,k)N$ in the Iwasawa decomposition.
The difference of the sign comes from the product of the differences of the signs of $k,s(\eta)$ and $k_g,s(g\eta)$. By definition of the sign cocycle, we obtain the formula for $\alpha(g,\eta)$.

Regarding the second statement, in the case of $H \simeq \PGL_2(\R)$, it is a consequence of positive determinant. For $H \simeq \SL_2(\R)$, since the action of $\Gamma_\mu$ preserves $\Lambda_1$ inside $K$, if we take $k$ in $\Lambda_1$, then $k_g$ is still in $\Lambda_1$ for $g\in \Gamma_\mu$. In this case we obtain that the sign cocycle $\sg$ is identically equal to $\id$ for the $\Gamma_\mu$-action, whence the claim.
\end{proof}

It follows from this lemma that our choice of the section $s:G/Q \to G$ implies that the cocycle $\alpha$ for the associated standard trivialization, projected on $D$, is equal to the Iwasawa cocycle $H \times H/Q_H  \to D$. 
In the rest of this part (Case 2.2), we will work with this choice of coordinates on $X$ (i.e.~ trivialization induced by the section $s$). We will identify the space $Q/R$ with the quotient $S/\Lambda$ where $S\simeq Q/R_0$ and $\Lambda$ is the lattice $R/R_0$. To alleviate the notation, sometimes we will write $F=S/\Lambda$.

\subsubsection{Limit measures on the fibre}

Let $\mu$ be a Zariski-dense probability measure on $H$ and $\nu$ be a $\mu$-stationary probability measure on $H/Q_H \times_\alpha F$. Recall from \S \ref{subsub.stat.meas.flag} that $\mu$ admits a unique stationary probability measure $\overline{\nu}_F$ on $H/Q_H$ which is also $\mu$-proximal (the Furstenberg measure). It follows (see \S \ref{subsub.fibre.measures}) that $\mu^{-\N^\ast}$-almost every $w^-$, the measure $\nu_{w^-}$ on $H/Q_H \times_\alpha F$ is of the form $\delta_{\xi(w^-)}\otimes \tilde{\nu}_{w^-}$, where $\xi: H^{-\N^\ast} \to H/Q_H$ is a measurable equivariant map (i.e.~ $\xi(ba_0)=a_0\xi(b)$ for $\mu^\Z$-a.e.~ $w$) and $\tilde{\nu}_{w^-}=\tilde{\nu}_{b}$ is a probability measure on $F$. In view of the equivariance property of $\nu_{w^-}$ and the fact that the action on the $F$-coordinate is given by the cocycle $\alpha$ over $H/Q_H$, the measure $\tilde{\nu}_{b}$ satisfies the following equivariance formula for $\mu^{-\N^\ast}$-a.e.~ $b \in H^{-\N^\ast}$ and $\mu$-a.e.~ $a_0 \in H$,
\begin{equation}\label{eq.equiv.cocycle}
\tilde{\nu}_{ba_0}=\alpha(a_0,\xi(b))\tilde{\nu}_b.    
\end{equation}
In the sequel, to simplify the notation, we also use the notation $\nu_{w^-}$ (or $\nu_w$) to denote the fibre measure $\tilde{\nu}_{w^-}$. This should not cause confusion. 

We start with a first claim which will allow us to focus attention on a single generic fibre measure $\nu_{w^-}$.

\bigskip

\noindent \textbf{Claim 0:} To prove Theorem \ref{thm.measure.class.geod}, it suffices to show that for $\mu^{-\N^\ast}$-a.e. $w^-$, the measure $\nu_{w^-}$ on $S/\Lambda$ is $D$-invariant.

\bigskip


\textit{Proof of Claim 0:} 
If we are in Case 2.2.a, by Lemma \ref{lemma.its.iwasawa}, the cocycle $\alpha$ actually takes values in $D$. From the equivariance formula \eqref{eq.equiv.cocycle} for $\nu_{w^-}$, 
it follows that the map $w^- \mapsto \nu_{w^-}$ is invariant under the inverse of the shift $T$ and hence is almost surely constant, by ergodicity of the map $T^{-1}$. 

For Case 2.2.b, we need to consider an extension by $\Z/2\Z=\{\pm 1 \} \simeq M$. We define $T^\sg$ on $H^{\Z}\times \Z/2\Z$ by 
\[T^\sg(w,j)=(Tw, \sg(w_0,\xi(w^-))j), \]
where $w\in H^\Z$ and $j\in \Z/2\Z$. For $\mu^\Z$-a.e. $w\in H^\Z$, we define
\[\nu_{w,1}=\nu_{w^-},\ \nu_{w,-1}=(-1)_*\nu_{w^-}, \]
where $(-1)_*$ is understood as the action of $-id\in D^\pm$. 
Then the formula \eqref{eq.equiv.cocycle} and Lemma \ref{lem:sign} imply
\[ {\nu}_{T^\sg(w,j)}=\sigma(w_0,\xi(w^-)){\nu}_{w,j}.  \]
Now since the Iwasawa cocycle $\sigma$ takes values in $D$ and $\nu_{\omega,j}$'s are $D$-invariant by running the same argument as in Case 2.2.a, we see that it is sufficient to prove that the measure $\beta^\sg:=\mu^\Z\otimes ((\delta_1+\delta_{-1})/2)$ is $T^\sg$-ergodic. We now proceed to prove this.

We consider the $\mu^\Z$-a.e.~ defined map $p$ from $H^\Z\times \Z/2\Z$ to $ H^\Z\times K$, by letting 
\[ p(w,j)=(w,k_{w^-} ),\text{ where } \sg(k_{w^-})=j, k_{w^-}Q_H=\xi(w^-). \]
Let $\tilde T^\sg(w,k)=(Tw, w_0k)$. 
The pushforward of the measure $\beta^\sg$ yields the measure
\[\tilde\beta^\sg:=\int_{H^\Z} \delta_w\otimes ((\delta_{k_{w^-}}+\delta_{-k_{w^-}})/2)\ d\mu^\Z(w) \]
on $H^\Z\times K$. Then $p$ is a semiconjugacy from $(H^\Z\times\Z/2\Z,T^\sg,\beta^\sg)$ to $(H^\Z\times K,\tilde T^\sg,\tilde \beta^\sg)$.
The fiber measure $(\delta_{k_{w^-}}+\delta_{-k_{w^-}})/2 $ is actually the measure $(\nu_K)_w$ for the unique $\mu$-stationary measure $\nu_K$ on $K$ and $(\nu_K)_w$ is the limit of $b_{-1}\cdots b_{-n}\nu_K$ for $\mu^Z$-a.e.~ $w$. (This measure $(\nu_K)_w$ is a lift of the Dirac mass $\delta_{\xi(w^-)}$ on $H/Q_H$. Since $\nu_K$ is unique, we can verify that the limiting measure has equal mass on two preimages). By \cite[Section 2.6]{bq.book}, since $\nu_K$ is $\mu$-ergodic, we know that $\tilde\beta^\sg$ is $\tilde T^\sg$ ergodic. Then from the semiconjugacy $p$, we obtain that $\beta^\sg$ is $T^\sg$ ergodic. The proof is complete.
\qed

\begin{remark}[$D^{\pm}$-invariance in Case 2.2.b]
In Case 2.2.b, the argument above implies that $\nu_{w,1}=\nu_{w,-1}$ for $\mu^\Z$-a.e.~ $w$. So the fiber measure $\nu^F$ is indeed $D^\pm$-invariant. We will also see later in the equidistribution part that the limiting measure will be $D^\pm$-invariant.
\end{remark}

\subsubsection{Dynamically defined norms}

To obtain the required $D$-invariance for a typical limit measure $\nu_{w^-}$ on the fibre, using the equivariance formula \eqref{eq.equiv.cocycle}, we will be passing to a limit of cocycle differences of type $\alpha(a_m'\ldots a_0', \xi(b))\alpha(a_n \ldots a_0, \xi(b))^{-1}$ for various sequences $b$ and $a$ as well as carefully chosen times $m,n \in \N$. The choice of times and sequences will be made so that the sequences land in some nice compact subset of the shift space and, simultaneously, the cocycle differences are controlled. An important tool for this purpose will be the dynamically defined norms given by the next result.

We fix an irreducible algebraic representation $V$ of $H$, where $V$ is a finite-dimensional real vector space. Endow it with a $K$-invariant Euclidean structure and let $\|\cdot\|$ be the standard Euclidean norm on $V$. 
Here and below, we will also use the shorthand $a^n$ to denote the finite product $a_{n-1} \ldots a_0$ of the corresponding sequence $(a_0,\ldots, a_{n-1}) \in H^n$. We have

\begin{proposition}\label{prop.dynnorm} \cite[Proposition 2.3]{eskin-lindenstraus.short} 
There exists a measurable map $w \mapsto \|.\|_w$ from $H^\Z$ into the space of Euclidean norms on $V$ and a $T$-invariant full measure subset $\Psi$ of $H^{\Z}$ such that for every $w=(b,a) \in \Psi$ and $n\in\N$, letting
\[\lambda_1(w,n):=\log\frac{\|a^nv_b \|_{T^n w}}{\|v_b \|_w}, \]
there exists $\kappa>1$ such that
\[\lambda_1(w,n)\in [1/\kappa,\kappa]n.  \]
In particular, due to cocycle property, for $w\in\Psi$ and $m>n$ in $\N$,
\begin{equation}\label{eq.lip.dynnorm}
\lambda_1(w,m)-\lambda_1(w,n)\in[1/\kappa,\kappa](m-n) .
\end{equation} 
\end{proposition}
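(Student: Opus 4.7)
The plan is to construct $\|\cdot\|_w$ by a tempering of a fixed $K$-invariant Euclidean structure on $V$ along the shift orbit, so that the subexponential fluctuations of the random walk cocycle get absorbed into the $w$-dependence of the norm, and the resulting log-cocycle $\lambda_1(w,n)$ becomes pointwise comparable to a linear function of $n$.

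First, I would invoke Furstenberg's theorem: since $\mu$ is Zariski-dense in $H\simeq\SL_2(\R)$ (or $\PGL_2(\R)$) and $V$ is an irreducible non-trivial representation, the top Lyapunov exponent $\lambda_1:=\lim_n\tfrac{1}{n}\log\|a^n\|$ exists, is strictly positive and simple. Via the Furstenberg boundary $\xi$ and the highest-weight line map $H/Q_H\to\P(V)$, select measurable representatives $b\mapsto v_b\in V$ in the top-weight direction. By Lemma \ref{lemma.iwasawa.norm}, the Euclidean log-growth $\ell_n(w):=\log\|a^nv_b\|_{\mathrm{Eucl}}-\log\|v_b\|_{\mathrm{Eucl}}$ coincides, up to a bounded error, with $\overline{\chi}(\overline{\sigma}(a^n,\xi(b)))$, where $\sigma$ is the Iwasawa cocycle and $\chi$ the highest weight. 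Oseledets' theorem combined with the finite first moment on $\mu$ then gives $\tfrac{1}{n}\ell_n(w)\to\lambda_1$ for $\mu^\Z$-a.e.\ $w$, and the additive cocycle identity $\ell_{n+m}(w)=\ell_n(T^mw)+\ell_m(w)$ holds by construction.

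Second, I would define the new norm on the top-weight line by $\|v_b\|_w:=e^{C(w)}\|v_b\|_{\mathrm{Eucl}}$, where $C(w)$ is a measurable corrector absorbing the deviation of $\ell_n(w)$ from $n\lambda_1$. A standard choice is a tempering of the form
\[
C(w):=\inf_{n\geq 0}\bigl(\ell_n(w)-n\kappa_0\bigr)
\]
for some $\kappa_0\in(0,\lambda_1)$, which is $\mu^\Z$-a.s.\ finite since $\ell_n(w)-n\kappa_0\to+\infty$. A direct computation using the additive cocycle identity yields $C(T^mw)-C(w)\geq -\ell_m(w)+m\kappa_0$, so that $\lambda_1(w,m)=\ell_m(w)+C(T^mw)-C(w)\geq m\kappa_0$. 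The matching upper bound $\lambda_1(w,m)\leq m\kappa_1$ for some $\kappa_1>\lambda_1$ follows from an analogous sup-based tempering; the technical heart of the construction, namely combining both one-sided tempers into a single measurable Euclidean norm that preserves both bounds, is carried out in \cite[Prop.~2.3]{eskin-lindenstraus.short}. Extending by orthogonal tempering to the second one-dimensional weight direction of $V$ (there is only one more weight since $H$ has rank one) produces the full Euclidean norm $\|\cdot\|_w$ with the announced bound $\lambda_1(w,n)\in[1/\kappa,\kappa]\,n$, with $\kappa=\max(1/\kappa_0,\kappa_1)$.

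Finally, I would take $\Psi$ to be the $T$-invariant full-measure intersection of the sets on which the correctors are finite and the Oseledets convergence holds. The auxiliary inequality \eqref{eq.lip.dynnorm} is then an immediate consequence of the additive cocycle identity $\lambda_1(w,m)-\lambda_1(w,n)=\lambda_1(T^nw,m-n)$ applied together with the two-sided bound at $T^nw$. The main obstacle is the combination step: a naive averaging of the infimum and supremum correctors weakens both bounds, so a more delicate construction alternating between the two tails is needed to preserve the uniform linear comparison in a single norm. Zariski-density of $\Gamma_\mu$ is essential to ensure simplicity and positivity of $\lambda_1$, while the finite first moment on $\mu$ provides the integrability needed for the Oseledets cocycle to be well-behaved under the correctors.
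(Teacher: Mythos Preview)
The paper does not prove this proposition at all; it is quoted verbatim as \cite[Proposition 2.3]{eskin-lindenstraus.short} and used as a black box, with only the remark that the finite first moment hypothesis is needed in its proof. So there is no ``paper's own proof'' to compare against.

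Your sketch is in the right spirit: the construction in \cite{eskin-lindenstraus.short} is indeed a tempering of a fixed Euclidean structure along orbits so that the subexponential fluctuations of the cocycle $\ell_n(w)$ are absorbed into the $w$-dependence of the norm, and your inf-corrector computation for the lower bound $\lambda_1(w,m)\geq m\kappa_0$ is correct. You also identify the genuine obstacle correctly, namely that a single inf- (or sup-) corrector only yields a one-sided bound, and combining the two into one Euclidean norm that simultaneously realizes both inequalities is the nontrivial part. Since you ultimately defer that step back to \cite{eskin-lindenstraus.short}, your write-up is really a plausibility argument rather than a proof; that is acceptable given that the paper itself does not attempt one.

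One correction: your remark that ``there is only one more weight since $H$ has rank one'' is wrong. An irreducible representation of $\SL_2(\R)$ of dimension $d$ has $d$ one-dimensional weight spaces, not two. This does not damage the argument, because $\lambda_1(w,n)$ only sees the norm along the top Oseledets line $\R v_b$ (and its forward images $\R a^n v_b$), so the extension of $\|\cdot\|_w$ to the rest of $V$ is cosmetic; you can simply keep the fixed Euclidean structure on the orthogonal complement rather than invoking a ``second weight direction''. Also, the derivation of \eqref{eq.lip.dynnorm} from the cocycle identity $\lambda_1(w,m)-\lambda_1(w,n)=\lambda_1(T^nw,m-n)$ is exactly how the paper uses it, so that part is fine.
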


We note at this point that the finite first moment assumption in Theorem \ref{thm.measure.class.geod} is required in the proof of the previous proposition in \cite{eskin-lindenstraus.short}.

This norm $\|.\|_w$ is called \textit{dynamical defined norm}. It is chosen with respect to the dynamics such that Proposition \ref{prop.dynnorm} holds.
Due to measurability of $w \mapsto \|.\|_w$, we can always compare the dynamically defined norms and the original norm on a large measure subset of $H^\Z$.
\begin{lemma}\cite[Lemma 2.7]{eskin-lindenstraus.short} \label{lemma.comparison.dyn.norm}
For every $\delta>0$, there exists a compact subset $K(\delta)$ of $\Psi$ with $\mu^{\Z}(K(\delta))>1-\delta/10$ and a constant $C(\delta)>0$ such that for $v\in V$ and $w\in K(\delta)$
\[ 1/C(\delta)\leq \frac{\|v\|_w}{\|v\|}\leq C(\delta). \]
\end{lemma}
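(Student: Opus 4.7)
The statement is a purely measure-theoretic consequence of the measurability of the assignment $w \mapsto \|\cdot\|_w$ provided by Proposition \ref{prop.dynnorm}, combined with the finite-dimensionality of $V$. The key observation is that for every individual $w \in \Psi$, the two norms $\|\cdot\|_w$ and $\|\cdot\|$ on the finite-dimensional real vector space $V$ are automatically equivalent, so the ratio $\|v\|_w/\|v\|$ is bounded above and below by some positive finite constant $C(w)$ depending on $w$. The task is therefore to uniformize $w \mapsto C(w)$ over a compact set of large $\mu^{\Z}$-measure.

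First, I would make this comparison function precise by setting
\[
C(w) \;:=\; \max\Bigl(\sup_{v\neq 0}\tfrac{\|v\|_w}{\|v\|},\;\sup_{v\neq 0}\tfrac{\|v\|}{\|v\|_w}\Bigr),
\]
which is finite for every $w\in\Psi$ by equivalence of norms in finite dimension. Using a countable dense subset of the unit sphere of $(V,\|\cdot\|)$, this supremum may be replaced by a countable supremum, so that the measurability of $w\mapsto \|\cdot\|_w$ stated in Proposition \ref{prop.dynnorm} directly yields measurability of $w\mapsto C(w)$ on $\Psi$.

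Next, since $C(\cdot)$ is an almost surely finite measurable function on the probability space $(H^\Z,\mu^\Z)$, for the given $\delta>0$ I can choose $N=N(\delta)>0$ such that the measurable set $A_\delta := \{w\in\Psi : C(w)\leq N\}$ satisfies $\mu^\Z(A_\delta)>1-\delta/20$. Finally, since $H^\Z$ is a Polish space and $\mu^\Z$ is a Borel probability measure, inner regularity provides a compact subset $K(\delta)\subset A_\delta$ with $\mu^\Z(K(\delta))>1-\delta/10$. Setting $C(\delta):=N$ immediately yields, for every $w\in K(\delta)$ and every nonzero $v\in V$, the two-sided bound $1/C(\delta)\leq \|v\|_w/\|v\| \leq C(\delta)$.

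There is no real obstacle here: the only nontrivial input is the measurability of $w\mapsto\|\cdot\|_w$, which is part of the conclusion of Proposition \ref{prop.dynnorm} above; the remainder is a standard Luzin-type argument turning an almost-everywhere finite measurable function into one bounded on a compact set of nearly full measure.
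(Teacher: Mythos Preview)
Your proposal is correct and matches the paper's own treatment: the paper does not give a detailed proof but simply cites \cite[Lemma 2.7]{eskin-lindenstraus.short} and remarks beforehand that ``due to measurability of $w \mapsto \|\cdot\|_w$, we can always compare the dynamically defined norms and the original norm on a large measure subset of $H^\Z$.'' Your argument is precisely the standard unpacking of that sentence.
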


We denote by $\chi$ the highest weight of the representation from Lemma \ref{lemma.iwasawa.norm}. Combining Lemmas \ref{lemma.iwasawa.norm} and \ref{lemma.comparison.dyn.norm}, and Proposition \ref{prop.dynnorm}, we deduce the following

\begin{corollary}\label{corol.alpha.close.to.dynamical.cocycle}
For every $\delta>0$, there exists a compact subset $K(\delta)$ of $H^\Z$ with $\mu^{\Z}(K(\delta))>1-\delta/10$ and a constant $C(\delta)>0$ such that for every $w \in \Psi$ and $n \in \N$ such that $w$ and $T^n w$ are both in $ K(\delta)$, we have
\begin{equation}\label{eq.comparison.dyn.norm}
    |\overline{\chi}(\overline{\alpha}(a_{n-1}\ldots a_0,\xi(b)))-\lambda_1(w,n)|\leq C(\delta).
\end{equation}
\end{corollary}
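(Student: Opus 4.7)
The plan is to combine the three ingredients assembled in the preceding pages. Lemma \ref{lemma.its.iwasawa} identifies the $D$-part of $\alpha(\cdot,\xi(b))$ with the Iwasawa cocycle $\sigma(\cdot,\xi(b))$; Lemma \ref{lemma.iwasawa.norm} identifies $\overline{\chi}(\overline{\sigma}(h,\eta))$ with the logarithmic norm ratio $\log(\|\rho(h)v\|/\|v\|)$ for any non-zero $v$ in the highest-weight line $V_\eta$; and Proposition \ref{prop.dynnorm} defines $\lambda_1(w,n)$ via the same ratio but computed with the dynamically defined norms at $w$ and $T^n w$. In Case 2.2.b, where $\alpha$ takes values in $D^\pm = D \times M$, the sign factor lies in the finite component $M$, which has trivial Lie algebra, so it contributes nothing to the additive character $\overline{\chi}$ and the three identifications still align.

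Choosing a non-zero $v_b \in V_{\xi(b)}$ and writing $a^n = a_{n-1}\cdots a_0$, the three facts above combine to give
\[
\overline{\chi}(\overline{\alpha}(a^n,\xi(b))) - \lambda_1(w,n) \;=\; \log\frac{\|a^n v_b\|}{\|v_b\|} - \log\frac{\|a^n v_b\|_{T^n w}}{\|v_b\|_w} \;=\; \log\frac{\|v_b\|_w}{\|v_b\|} - \log\frac{\|a^n v_b\|_{T^n w}}{\|a^n v_b\|}.
\]
It remains only to bound each of the two log-ratios on the right-hand side.

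For this, fix $\delta>0$ and take the compact set $K(\delta)\subset \Psi$ and the constant $C(\delta)>0$ supplied by Lemma \ref{lemma.comparison.dyn.norm}. Applying that lemma at $w$ (with vector $v_b$) gives $|\log(\|v_b\|_w/\|v_b\|)|\leq \log C(\delta)$ whenever $w\in K(\delta)$, and applying it at $T^n w$ (with vector $a^n v_b$) gives $|\log(\|a^n v_b\|_{T^n w}/\|a^n v_b\|)|\leq \log C(\delta)$ whenever $T^n w\in K(\delta)$. Under the joint assumption $w,T^n w\in K(\delta)$, the desired bound follows with constant $2\log C(\delta)$, which we relabel $C(\delta)$ to match the statement. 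There is no genuine obstacle here: the corollary is essentially a bookkeeping consequence of the three inputs, and the only minor care needed is the sign-factor treatment in Case 2.2.b mentioned above.
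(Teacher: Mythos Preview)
Your proof is correct and follows essentially the same approach as the paper. The only point you leave implicit that the paper spells out is the identification of the vector $v_b$ appearing in the definition of $\lambda_1(w,n)$ (the random-matrix-product limit direction) with a vector in the highest-weight line $V_{\xi(b)}$; the paper verifies this by noting that the pushforward of $\overline{\nu}_F$ under $hP\mapsto hV^N$ is the unique $\mu$-stationary proximal measure on $\P(V)$, so $\R v_b=V_{\xi(b)}$.
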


\begin{proof}
Recall from \S \ref{subsub.stat.meas.flag} that given a Zariski-dense probability measure $\mu$ on $H$, we have a map $\xi: H^\Z \to H/P$ defined for $\mu^\Z$-a.e.~ $w=(b,a)$ satisfying $(\overline{\nu}_F)_\omega=\delta_{\xi(w)}$ and the equivariance property $b_{-1}\xi(T^{-1}w)=\xi(w)$. Recall also (see \S \ref{subsub.iwasawa}) that there exists an $H$-equivariant map $H/P \to \P(V)$ given by $hP \mapsto hV^N$ where $N$ is the unipotent radical of $P$. The image of $\overline{\nu}_F$ under this map is the unique $\mu$-stationary and proximal measure on $\P(V)$. It follows that the line $\R v_w$ is the image of $\xi(w)$ under the map $hP \mapsto hV^N$. Therefore, Lemma \ref{lemma.iwasawa.norm} implies that we have $\overline{\chi}(\overline{\alpha}(h,\xi(w)))=\log \frac{\|h v_w\|}{\|v_w\|}$ for $\mu^\Z$-a.e.~ $w \in H^\Z$. 

Given $\delta>0$, let $K(\delta)$ and $C(\delta)>1$ be as given by Lemma \ref{lemma.comparison.dyn.norm}, $C(\delta)$ increased if necessary to satisfy $2 \log C(\delta) \leq C(\delta)$. Then, if $w$ and $T^n w $ belong to $K(\delta)$, since $a^n v_w=v_{T^nw}$, by Lemma \ref{lemma.comparison.dyn.norm}, both $\frac{\|a^n v_w\|_{T^nw}}{\|a^nv_w\|}$ and $\frac{\|v_w\|_w}{\|v_w\|}$ belong to $[1/C(\delta), C(\delta)]$. The corollary follows.
\end{proof}

\subsubsection{Divergence estimates}

We also need the following lemma which essentially follows from Oseledets' theorem and Lemma \ref{lemma.comparison.dyn.norm}.

\begin{lemma}\label{lemma.div.est} \cite[Lemma 3.5]{eskin-lindenstraus.short}
For every $\delta>0$ and $t_0 \in \N$, there exists a compact subset $K'(\delta,t_0)=K$ of $H^\Z$ with $\mu^\Z(K)>1-\delta/10$ and a constant $C=C(\delta,t_0)>0$ with the following property: for every $w \in K$, $w' \in W^-_1(w) \cap K$ and $t>0$ such that $T^t w \in K$ and $T^tw' \in T^{[-t_0,t_0]}K$, we have
$$
|\lambda_1(w,t)-\lambda_1(w',t)| \leq C.$$
\end{lemma}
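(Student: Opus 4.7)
The plan is to reduce the divergence estimate $|\lambda_1(w,t)-\lambda_1(w',t)|$ to a difference of Iwasawa-cocycle evaluations via Corollary \ref{corol.alpha.close.to.dynamical.cocycle}, and then to bound that difference using the fact that the hypotheses force the forward images $a^t\xi(b)$ and $a^t\xi(b')$ into a fixed compact subset of $H/P$.

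\textbf{Construction of $K=K'(\delta,t_0)$.} I would take $K$ as the intersection of three compact subsets of $\Psi$, each of $\mu^\Z$-measure at least $1-\delta/30$: (i) the set $K(\delta/3)$ from Corollary \ref{corol.alpha.close.to.dynamical.cocycle}, where \eqref{eq.comparison.dyn.norm} holds with constant $C(\delta/3)$; (ii) a set where the letter norms $\|w_j\|$ for $j\in[-t_0,t_0]$ are uniformly bounded by some $M_0=M_0(\delta,t_0)$, existence following from the finite first moment of $\mu$ together with Chebyshev and a union bound; (iii) a set where $w\mapsto \xi(w^-)$ takes values in a fixed compact subset $\Xi\subset H/P$, using inner regularity of the pushforward of $\mu^\Z$ under $\xi$.

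\textbf{Reduction to an Iwasawa comparison.} For $w\in K$ with $T^tw\in K$, Corollary \ref{corol.alpha.close.to.dynamical.cocycle} gives
\[|\lambda_1(w,t)-\overline{\chi}(\overline{\alpha}(a^t,\xi(b)))|\leq C(\delta/3),\]
and, as in Case 2.2, $\overline{\alpha}$ coincides with the Iwasawa cocycle $\overline{\sigma}$ on the $\mathfrak{d}$-factor. For $w'$ the hypothesis only yields $T^{t-s}w'\in K$ for some $|s|\leq t_0$, so I would use the cocycle relation
\[\lambda_1(w',t)=\lambda_1(w',t-s)+\lambda_1(T^{t-s}w',s),\]
apply Corollary \ref{corol.alpha.close.to.dynamical.cocycle} to the first summand, bound the second by $\kappa\,t_0$ via Proposition \ref{prop.dynnorm}, and control $|\overline{\chi}(\overline{\sigma}(a^t,\xi(b')))-\overline{\chi}(\overline{\sigma}(a^{t-s},\xi(b')))|$ using the cocycle identity for $\sigma$ and the bound $M_0$ on the intermediate letters supplied by (ii) at $T^{t-s}w'$. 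All told, up to a $(\delta,t_0)$-dependent additive constant, it remains to bound
\[\Delta:=\big|\overline{\chi}(\overline{\sigma}(a^t,\xi(b)))-\overline{\chi}(\overline{\sigma}(a^t,\xi(b')))\big|.\]

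\textbf{Main step and main obstacle.} By Lemma \ref{lemma.iwasawa.norm}, $\Delta=|\log\|a^tv_b\|-\log\|a^tv_{b'}\||$ for unit vectors $v_b,v_{b'}$ spanning $V_{\xi(b)},V_{\xi(b')}$. By equivariance, $a^t\xi(b)=\xi(T^tw)\in\Xi$; using (ii) to absorb the bounded letters, $a^t\xi(b')$ lies in a fixed compact enlargement $\Xi'\supset\Xi$ depending only on $\Xi$ and $M_0$. The crux is to show that these two conditions force the ratio $\|a^tv_b\|/\|a^tv_{b'}\|$ to lie in a compact subset of $(0,\infty)$. Writing $a^t=k_1(t)\,d(t)\,k_2(t)$ in $KAK$ form, this reduces to showing that both $|\langle k_2(t)v_b,e_+\rangle|$ and $|\langle k_2(t)v_{b'},e_+\rangle|$ are bounded below by a constant $c=c(\Xi',K)>0$—equivalently, that the $\P(V)$-directions of $a^tv_b,a^tv_{b'}$ stay at positive distance from the asymptotic contracting output direction $k_1(t)e_-$. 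This is the main obstacle, since $k_1(t)$ depends on the random product and hence on $t$. It is resolved by a further Lyapunov/Pesin refinement of $K$ that forces $\Xi$ to be separated uniformly in $t$ from this direction whenever $T^tw\in K$; granted this, $\|a^tv_b\|$ and $\|a^tv_{b'}\|$ are both comparable to the top singular value of $d(t)$ up to the factor $c$, giving $\Delta\leq-2\log c$ and completing the proof. All remaining steps are routine manipulations with cocycle identities, Chebyshev-type inequalities, the Lipschitz bound \eqref{eq.lip.dynnorm}, and Lemma \ref{lemma.comparison.dyn.norm}.
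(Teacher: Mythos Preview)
The paper does not supply its own proof; the lemma is quoted from Eskin--Lindenstrauss with the one-line hint that it ``essentially follows from Oseledets' theorem and Lemma~\ref{lemma.comparison.dyn.norm}''. Your reduction via Corollary~\ref{corol.alpha.close.to.dynamical.cocycle} and Lemma~\ref{lemma.iwasawa.norm} to bounding $\Delta=\bigl|\log\|a^tv_b\|-\log\|a^tv_{b'}\|\bigr|$ is correct, as is your treatment of the time shift $|s|\le t_0$. The gap is in the ``main step''. You reduce (correctly) to the input condition $|\langle k_2(t)v_b,e_+\rangle|\ge c$, but then claim this is ``equivalently'' the output condition that $[a^tv_b]$ stays at positive distance from $k_1(t)e_-$. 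That equivalence is false: in dimension two, with $k_2(t)v=(\cos\theta,\sin\theta)$ and $d(t)=\mathrm{diag}(e^\lambda,e^{-\lambda})$, the choice $|\cos\theta|\asymp e^{-2\lambda}$ puts $[a^tv]$ at projective distance $\asymp 1$ from $k_1e_-$ while $\|a^tv\|\asymp e^{-\lambda}$. Thus both output directions can lie in your compact $\Xi'$ with $\|a^tv_b\|/\|a^tv_{b'}\|\asymp e^{2\lambda}$. Knowing where $a^t\xi(b)$ and $a^t\xi(b')$ land simply does not bound $\Delta$, and your proposed ``Lyapunov/Pesin refinement'' is aimed at the wrong side of the map.

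What the Oseledets route (the one the paper points to) actually gives is transversality on the input side: on a set of measure $>1-\delta/30$ the angle between the past-measurable direction $v_{w^-}$ and the future-measurable Oseledets slow subspace $E^s(w^+)$ is bounded below by some $c>0$. Since $w'\in W_1^-(w)$ shares the future of $w$, one has $E^s((w')^+)=E^s(w^+)$, so both $v_b$ and $v_{b'}$ are $c$-transverse to the \emph{same} subspace; combined with the norm comparison of Lemma~\ref{lemma.comparison.dyn.norm} and the return hypotheses this yields $\|a^tv_b\|\asymp\|a^tv_{b'}\|$. If you replace your condition (iii) (which is vacuous anyway, $H/P$ being compact) by this transversality set and drop the output-side claim, the rest of your argument goes through.
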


Here $W^-_1(w)$ is the local stable leaf of $w$ in the shift space $H^\Z$, i.e.~ $W^-_1(w)=\{w' \in H^\Z : w'_k=w_k,  \;  \forall k \geq 0\}$.

\subsubsection{Non-degeneracy of the stationary measure on projective space}

\begin{theorem}\cite[Theorem 3.1]{bougerol.lacroix}\label{thm.random.matrix.product}
Let $\mu$ be a Zariski-dense probability measure on a linear semisimple $\R$-split group $H$ and let $V$ be an irreducible algebraic representation of $H$. Then, for $\mu^{-\N^\ast}$-a.e.~ $b=(b_{-1},\ldots)$, any limit point $\hat{\pi}_b$ of the sequence $\frac{b_{-1}\ldots b_{-n}}{\| b_{-1}\ldots b_{-n}\|}$ in $\Endo(V)$ has rank one and the same image. Moreover, for any hyperplane $W<V$, the set of $b \in H^{-\N^\ast}$ such that $Im(\hat{\pi}_b) \in W$ has zero measure.
\end{theorem}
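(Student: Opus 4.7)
My plan is to deduce the statement from two classical pillars: the simplicity of the top Lyapunov exponent for Zariski-dense walks on semisimple groups (Guivarc'h--Raugi), and the existence of the Furstenberg boundary map on $\P(V)$.

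I would first set up the singular value decomposition. Writing $g_n := b_{-1}\cdots b_{-n}$, decompose $g_n = k_n a_n l_n$ with $k_n, l_n$ in a maximal compact subgroup of $\GL(V)$ and $a_n = \diag(\sigma_1(g_n),\ldots,\sigma_d(g_n))$ in decreasing order. Zariski-density of $\mu$ in the semisimple $\R$-split group $H$ together with irreducibility of $V$ ensure, by Guivarc'h--Raugi, that the top Lyapunov exponent $\lambda_1$ of $\mu$ on $V$ is simple: $\lambda_1 > \lambda_2$. It follows that $\sigma_2(g_n)/\sigma_1(g_n) \to 0$ exponentially fast for $\mu^{-\N^\ast}$-a.e.~$b$. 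Since $\|g_n\| = \sigma_1(g_n)$, one has
\[ \frac{g_n}{\|g_n\|} = k_n\, \diag\!\left(1,\, \tfrac{\sigma_2(g_n)}{\sigma_1(g_n)},\, \ldots,\, \tfrac{\sigma_d(g_n)}{\sigma_1(g_n)}\right) l_n, \]
so, by compactness of the unit sphere of $\Endo(V)$, limit points $\hat\pi_b$ exist and every such limit point must have rank exactly one.

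For the common image assertion, I would observe that by irreducibility the highest-weight line gives an $H$-equivariant embedding of the flag variety $H/P$ into $\P(V)$, and the boundary map $\xi:H^{-\N^\ast} \to H/P$ of \S\ref{subsub.stat.meas.flag} then corresponds to a line $\R v_b \subset V$, exactly as in the proof of Corollary \ref{corol.alpha.close.to.dynamical.cocycle}. A direct computation with the singular value decomposition above identifies $\Ima(\hat\pi_b)$ with the limit of the top singular direction $k_n e_1 \in \P(V)$ along the chosen subsequence. The step I expect to be the most technical is showing that this limit is almost surely independent of the subsequence and equals $\xi(b)$; this is essentially the heart of the Bougerol--Lacroix argument, and reduces to the analogous spectral gap $2\lambda_1 > \lambda_1 + \lambda_2$ on $\Lambda^2 V$, which is immediate from the simplicity of $\lambda_1$ on $V$. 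Granting this, every limit point $\hat\pi_b$ has the common image $\R v_b$, independent of the extracting subsequence.

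Finally, for the non-degeneracy claim, the pushforward $\xi_\ast \mu^{-\N^\ast}$ is the unique $\mu$-stationary (Furstenberg) probability measure $\overline\nu_F$ on $H/P$, viewed as a probability measure on $\P(V)$ via the embedding above. If $\overline\nu_F(\P(W)) > 0$ for some hyperplane $W \subset V$, then the smallest projective subspace carrying full $\overline\nu_F$-mass among those of maximal measure would be a $\Gamma_\mu$-invariant proper projective subspace; by Zariski-density of $\Gamma_\mu$ in $H$ this subspace would be $H$-invariant, contradicting irreducibility of $V$. Hence $\overline\nu_F(\P(W)) = 0$ for every hyperplane $W$, and pulling back through $\xi$ yields $\mu^{-\N^\ast}\{b : \Ima(\hat\pi_b) \subset W\} = 0$, which is the desired non-degeneracy.
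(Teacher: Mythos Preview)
The paper does not prove this theorem: it is quoted verbatim as \cite[Theorem 3.1]{bougerol.lacroix} and used as a black box, so there is no ``paper's own proof'' to compare against. Your sketch is a reasonable outline of the classical argument and would work in the context where the paper invokes the result (where $\mu$ has finite first moment).

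One remark on the route you take versus the one in Bougerol--Lacroix: you go through simplicity of the top Lyapunov exponent (Guivarc'h--Raugi) and the singular value gap $\sigma_2/\sigma_1 \to 0$, which tacitly uses a first-moment hypothesis to make $\lambda_1,\lambda_2$ well defined. The original statement in \cite{bougerol.lacroix} is formulated under only strong irreducibility and proximality (both of which follow here from Zariski-density and $\R$-splitness) and is proved via Furstenberg's martingale argument on $\P(V)$ without any moment assumption: one shows directly that $(b_{-1}\cdots b_{-n})_*\overline\nu_F \to \delta_{\xi(b)}$ and deduces from this that any norm-one limit point of $b_{-1}\cdots b_{-n}$ is rank one with image $\R v_b$. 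Your version is perfectly adequate for the paper's purposes, but it is worth knowing that the moment-free martingale route is what the cited reference actually does. Your non-degeneracy argument (Furstenberg measure gives zero mass to proper projective subspaces, by irreducibility) is the standard one and is correct.
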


The image of any such limit point will be denoted $\R v_b$, i.e.~ $v_b \in V$ denotes a choice of a non-zero unit vector (for the norm $\|.\|$) in the image line.

We record the following statement which follows from Theorem \ref{thm.random.matrix.product}.

\begin{lemma}\label{lemma.random.lin.form}
For $\mu^\N$-a.e.~ $a \in H^\N$, there exists a linear form $\varphi_a$ of unit norm on $V$. For every such $a$ and for every $\delta>0$, there exist $\epsilon>0$ and a compact subset $K_a(\delta)$ of $H^{-\N^\ast}$ with $\mu^{-\N^\ast}(K_a(\delta))>1-\delta/10$ with the property that if $b, b' \in K_a(\delta)$, we have $|\varphi_a(v_b)|>\epsilon$ and
$$
\lim_{n \to \infty} \frac{\|a_{n}\ldots a_0 v_b\|}{\|a_{n}\ldots a_0 v_{b'}\|} =\frac{|\varphi_a(v_b)|}{|\varphi_a(v_{b'})|}.
$$
Moreover, for any linear form $\varphi$ on $V$, the set of $a\in H^\N$ such that $\varphi_a\in \R \varphi$ has zero measure.
\end{lemma}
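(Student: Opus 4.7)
The plan is to recognize Lemma \ref{lemma.random.lin.form} as the dual of Theorem \ref{thm.random.matrix.product} and to deduce it by applying the latter to the contragredient representation on $V^\ast$ along the reversed-inverted walk. Concretely, for $a=(a_0,a_1,\dots)\in H^\N$ and $M_n:=a_n\cdots a_0$, I will use the identity $\rho(M_n)^T=\rho^\ast(a_0^{-1})\rho^\ast(a_1^{-1})\cdots\rho^\ast(a_n^{-1})$ in $\Endo(V^\ast)$, where $\rho^\ast$ denotes the contragredient. Setting $c_{-k-1}:=a_k^{-1}$, this expression has exactly the form treated by Theorem \ref{thm.random.matrix.product}, now applied to the pushforward $\check\mu$ of $\mu$ under inversion and to the representation $V^\ast$. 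Both hypotheses are met: $\check\mu$ is Zariski-dense in $H$ because inversion is a group automorphism, and $V^\ast$ is an irreducible algebraic $H$-representation because $V$ is.

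This application would yield, for $\mu^\N$-a.e.\ $a$, that every subsequential limit of $\rho(M_n)^T/\|\rho(M_n)^T\|$ in $\Endo(V^\ast)$ is a rank-one operator sharing a common image line $\R\varphi_a$; I normalize $\varphi_a$ to have unit norm. Transposing, every subsequential limit $\hat\pi$ of $\rho(M_n)/\|\rho(M_n)\|$ in $\Endo(V)$ is then rank-one with kernel $\ker\varphi_a$, so $\hat\pi v=\varphi_a(v)\,u_{\hat\pi}$ for some $u_{\hat\pi}\in V$. Taking a ratio of norms cancels both the factor $\|\rho(M_n)\|$ and the auxiliary $u_{\hat\pi}$, giving
\[
\frac{\|\rho(M_n)v_b\|}{\|\rho(M_n)v_{b'}\|}\longrightarrow \frac{|\varphi_a(v_b)|}{|\varphi_a(v_{b'})|}
\]
whenever $\varphi_a(v_b),\varphi_a(v_{b'})\neq 0$, which is the main ratio formula.

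To construct $K_a(\delta)$, I plan to combine inner regularity with the ``moreover'' clause of Theorem \ref{thm.random.matrix.product}. Applied in the original representation $V$ with hyperplane $W=\ker\varphi_a$ (for $\mu^\N$-a.e.\ fixed $a$), this clause says $\{b:v_b\in\ker\varphi_a\}$ is $\mu^{-\N^\ast}$-null, i.e.~$\varphi_a(v_b)\neq 0$ almost surely in $b$. Inner regularity then produces a compact $K_a(\delta)\subset H^{-\N^\ast}$ of measure $>1-\delta/10$ on which $|\varphi_a(v_b)|\geq\epsilon$ uniformly for some $\epsilon>0$, and a further application of Egorov's theorem to the a.s.\ convergence above refines $K_a(\delta)$ so that the limit holds uniformly on $K_a(\delta)\times K_a(\delta)$, which in particular gives the pointwise claim in the statement.

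The last, non-degeneracy, assertion is the same moreover clause applied in the dual setting: any line $\R\varphi\subset V^\ast$ is contained in some proper hyperplane $W\subset V^\ast$, whence $\{a:\varphi_a\in\R\varphi\}\subseteq\{c:v_c\in W\}$ via the identification $\varphi_a=v_c$, and the latter set is $\mu^\N$-null. The one technical point that will need care is the correct matching of image lines and kernel hyperplanes under the transpose identification $\Endo(V)\simeq\Endo(V^\ast)^T$, ensuring that the two applications of Theorem \ref{thm.random.matrix.product}---the direct one characterizing $v_b$ and the dual one characterizing $\varphi_a$---are glued together coherently. This is routine linear algebra and I do not foresee a conceptual obstacle.
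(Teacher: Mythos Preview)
Your proposal is correct and follows essentially the same approach as the paper. Both arguments dualize Theorem \ref{thm.random.matrix.product} to pass from the backward product $b_{-1}\cdots b_{-n}$ to the forward product $a_n\cdots a_0$: the paper does this by applying the theorem directly to the transposed sequence $(a_i^t)$ and then reading off $\varphi_a$ via the orthogonal complement $\ker\varphi_a=(\im\hat\pi_a)^\perp$, while you phrase the same step through the contragredient representation on $V^\ast$ and the inverted measure $\check\mu$; these are equivalent formulations of the same duality. The construction of $K_a(\delta)$ from the ``moreover'' clause of Theorem \ref{thm.random.matrix.product} and the non-degeneracy assertion are handled identically in spirit (your Egorov step is harmless but unnecessary, as the lemma only asserts pointwise convergence).
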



\begin{proof}
Applying Theorem \ref{thm.random.matrix.product} to the sequence of transposes of $a_i$'s, we get that for $\mu^\N$-a.e.~ $a=(a_0,a_1,\ldots)$, any limit point of the sequence $\frac{a_n \ldots a_0}{\|a_n \ldots a_0\|}=(\frac{a_0^t \ldots a_n^t}{\|a_0^t \ldots a_n^t\|})^t$ of linear transformations is a rank-one linear map; denote it by $\pi_a$. Note that the kernel of $\pi_a$ does not depend on the choice of the limit rank-one transformation. We then define $\varphi_a$ linear form given by orthogonal projection onto the line orthogonal to $\ker\pi_a$. By the transpose relation, indeed we have
\[\ker \varphi_a=\ker\pi_a=(\im{\hat{\pi}_a})^\perp, \]
where $\hat{\pi}_a$ is a limit point of the sequence $\frac{a_0^t \ldots a_n^t}{\|a_0^t \ldots a_n^t\|}$ and $\pi_a=\hat{\pi}_a^t$. Due to the last claim of Theorem \ref{thm.random.matrix.product}, we obtain the last claim of this lemma.

Applying Theorem \ref{thm.random.matrix.product} to $b$, then the last claim of Theorem \ref{thm.random.matrix.product} implies that the $\mu^{-\N^\ast}$-measure of $b$'s such that $\ker \pi_a$ contains $v_b$ is zero. Therefore, given a typical $a$ (i.e.~ in a set of full measure) and $\delta$, there exists a compact set $K_a(\delta)$ in $H^{-\N^\ast}$ such that $\mu^{-\N^\ast}(K_a(\delta))>1-\delta/10$ and for every $b \in K_a(\delta)$, we have $d(\ker \pi_a, v_b)>\epsilon$, where $d$ denotes the projective distance induced by $\|.\|$. That is, $d(\ker \pi_a, v_b)=\frac{|\varphi_a(v_b)|}{\|\varphi_a\|\|v_b\|}$.
\end{proof}


\subsubsection{Relative density of typical points}
We also need one more lemma that will be used to spread the initial invariance obtained via the drift argument for a word $\hat{\omega}$ to a set of words with positive measure.

\begin{lemma}\label{lemma.relative.density.typical.orbit}
Let $X$ be a separable metric space, $m$ a Borel probability measure on $X$ and $T$ a measurable measure-preserving and ergodic transformation $X \to X$. Then, for any measurable subset $K$ of $X$ with positive $m$-measure, there exists a conull subset $\dot{K}$ 
such that
\[K_1:=\{x\in K : \overline{K\cap T^{\N}x}\supset \dot{K} \}.\]
 is conull subset of $K$.
\end{lemma}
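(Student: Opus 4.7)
The plan is to use the separability of $X$ to reduce the density requirement to a countable family of visit conditions, each of which holds $m$-almost surely by the ergodicity of $T$. I interpret ``conull subset $\dot{K}$'' as $\dot{K} \subset K$ with $m(K \setminus \dot{K}) = 0$; this reading is essentially forced by the fact that $\overline{K\cap T^\N x} \subset \overline{K}$, so any conull set outside $\overline{K}$ could never be contained in such a closure.

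First I would fix a countable basis $\{V_j\}_{j \in \N}$ for the topology of $X$ (which exists by separability) and set $\mathcal{J} := \{j \in \N : m(K \cap V_j) > 0\}$. For each $j \in \mathcal{J}$, Birkhoff's pointwise ergodic theorem applied to the indicator of $K \cap V_j$ produces a full $m$-measure set
\[ B_j := \{x \in X : T^k x \in K \cap V_j \text{ for infinitely many } k \in \N\}, \]
since the ergodic averages converge $m$-a.e.\ to the constant $m(K \cap V_j) > 0$. I then set $\dot{K} := K \cap \Supp(m|_K)$, where $\Supp(m|_K)$ denotes the support of the restriction of $m$ to $K$; because $X$ is separable and the measure is Borel, $m(K \setminus \Supp(m|_K)) = 0$, so $\dot{K}$ is a conull subset of $K$. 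Finally, I would define $K_1 := K \cap \bigcap_{j \in \mathcal{J}} B_j$, which is conull in $K$ as a countable intersection of $m$-full-measure sets restricted to $K$.

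To verify the density property, fix $x \in K_1$ and $y \in \dot{K}$, and let $\eps > 0$. Choose $j$ with $y \in V_j \subset B(y, \eps)$. Since $y$ lies in $\Supp(m|_K)$, the neighborhood $V_j$ of $y$ satisfies $m(K \cap V_j) > 0$, so $j \in \mathcal{J}$. Because $x \in B_j$, there exists $k \in \N$ with $T^k x \in K \cap V_j$, providing a point of $K \cap T^\N x$ within distance $\eps$ of $y$. Letting $\eps \to 0$ gives $y \in \overline{K \cap T^\N x}$, and since $y \in \dot{K}$ was arbitrary, $\dot{K} \subset \overline{K \cap T^\N x}$, so $x \in K_1$ belongs to the set appearing on the right-hand side of the statement.

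I do not foresee a serious obstacle: the argument is a standard combination of Birkhoff's theorem with a countable topological basis, and the countability of $\mathcal{J}$ is what makes everything line up. The only point that merits care is the interpretation of ``conull'' and ensuring that $\dot{K}$ is chosen intrinsically from $(K, m|_K)$ rather than from the particular point $x$, which is why taking it to be the support of $m|_K$ (a canonical object depending only on $K$ and $m$) is natural.
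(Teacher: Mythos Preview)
Your argument is correct. Both your proof and the paper's arrive at essentially the same $\dot{K}$ (the support of $m|_K$, up to a null set) and both rest on Birkhoff's theorem plus separability, but the routes differ. The paper passes to the induced system $(K_0, m|_{K_0}, T^K)$ with $T^K$ the first-return map, observes that ergodicity of $T$ implies ergodicity of $T^K$, and then applies Birkhoff to get that $(T^K)^\N x$ equidistributes to $m|_{K_0}$ for a.e.\ $x$, so its closure contains $\Supp(m|_{K_0})$. You instead work directly with $T$ on $X$: you apply Birkhoff to the indicator of $K\cap V_j$ for each basic open set $V_j$ of positive $m|_K$-measure, and intersect the resulting countable family of full-measure sets. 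Your approach is slightly more elementary in that it avoids the induced-transformation machinery and only needs Birkhoff on countably many indicators rather than full equidistribution; the paper's approach packages the same information more conceptually via the return map. One minor notational point: you reuse the symbol $K_1$ for your auxiliary set $K\cap\bigcap_{j\in\mathcal J}B_j$, whereas the lemma reserves $K_1$ for the set defined in the statement; since you show your set is contained in the latter, the conclusion follows, but it would be cleaner to name your auxiliary set differently.
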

\begin{proof}

Consider the induced system $(K_0,m|_{K_0}, T^K)$ where $T^K$ is the first return map to the set $K$ and $K_0$ is the conull subset of $K$ on which the points are infinitely recurrent (Poincar\'e recurrence). 
By ergodicity of $T$, we know that $T^K$ is also ergodic with respect to the measure $m|_{K_0}$. By Birkhoff's theorem, we know that for $m|_{K_0}$ a.e.~ $x \in K_0$ the orbit $\{(T^K)^nx\}_{n\in\N}$ equidistributes to the measure $m|_{K_0}$. 
So for $m|_{K_0}$ a.e.~ $x$, we have 
\[ \overline{K\cap T^{\N}x}= \overline{ (T^K)^{\N}x}\supset \Supp{m|_{K_0}}=:\dot{K}. \]
The proof is complete.
\end{proof}

We are now ready to start

\begin{proof}[Proof of Theorem \ref{thm.measure.class.geod}]

\textbf{Choosing parameters and sets}:
Let $\delta \in (0,1/10)$ be a small enough positive constant. Let $\Psi$ be a $T$-invariant full measure set contained in the intersection of the full-measure subset of $H^\Z$ given by Proposition \ref{prop.dynnorm} with the full-measure subset on which the map $\omega \mapsto \nu_\omega$ is defined. Denote by $C(\delta)$ the constant given by Corollary \ref{corol.alpha.close.to.dynamical.cocycle} and $K(\delta) \subseteq \Psi$ a compact set chosen using the same corollary satisfying $\mu^\Z(K(\delta))>1-\delta/20$. Fix a compact subset $K_{cont}$ of $H^\Z$ of $\mu^\Z$-measure $>1-\delta/20$ on which the map from $w\in H^\Z$ to $\nu_w$ in the space of probability measures on $F$ is continuous.
Let $$K_0(\delta)=K(\delta) \cap K_{cont} \cap K'(\delta,1),$$
where $K'(\delta,1)$ is the compact subset of $\Psi$ obtained from Lemma \ref{lemma.div.est} and let $C$ be the positive constant given by the same lemma.

\bigskip

Now applying Lemma \ref{lemma.relative.density.typical.orbit} to the shift system $X=H^\Z$ and $m=\mu^\Z$ with $K=K_{0}(\delta)$, by regularity of $\mu^\Z$, we can find a compact subset $K_0'(\delta)$ of $K_{0}(\delta)$ with $$
\mu^\Z(K_{0}(\delta))-\mu^\Z(K_0'(\delta))<\delta/20
$$
such that for every $w \in K_0'(\delta)$, the closure of the intersection of the $T$-orbit of $w$ with $K_0(\delta)$ contains a $\mu^\Z$-conull subset of $K_0(\delta)$. Finally, fix a compact $\underline{K}$ of $H$ with sufficiently large $\mu$-measure, so that 
$$
\hat{\underline{K}}:=\{w \in H^\Z : (w_{-C_2},\ldots,w_0,\ldots,w_{C_2}) \in \underline{K}^{2C_2+1}\}
$$
has $\mu^\Z$-measure $>1-\delta/20$, where $C_2=[\kappa(2\kappa +C)]+1$, with $\kappa$ as given by Proposition \ref{prop.dynnorm}. We now let $$K_{0}''(\delta)=K_{0}(\delta) \cap K_0'(\delta) \cap \hat{\underline{K}}.$$

\bigskip

Let $N(\delta) \in \N$ be a constant so that there exists a compact subset  
\begin{equation}\label{equ:Kgen}
K^{gen}(\delta) \subset \{w \in H^\Z : \frac{1}{n}\#\{k=1,\ldots,n : T^k w \in K_0''(\delta)\}>1-\delta/2, \; \forall n \geq N(\delta)\}
\end{equation}
with $\mu^\Z$-measure $\geq 1-\delta/10$. The existence of this set is ensured thanks to Birkhoff's ergodic theorem. 
Indeed, due to Birkhoff's ergodic theorem, we obtain for $\mu^{\Z}$ a.e.~ $w\in H^{\Z}$,
\[\lim_{n\rightarrow\infty}\frac{1}{n}\#\{k=1,\cdots,n : T^kw\in K_0''(\delta) \}\rightarrow \mu^{\Z}(K_0''(\delta))>1-4\delta/10. \]
We can therefore find a large constant $N(\delta)$ such that \eqref{equ:Kgen} holds.
Let $$K_{00}(\delta)=K_0''(\delta) \cap K^{gen}(\delta).$$

\bigskip

For an element $a \in H^\N$ and a subset $K$ of $H^\Z$, let $K_a^-$ denote the set $\{b \in H^{-\N^\ast} : (b,a) \in K\}$. By Markov's inequality, the set $$K^+=\{a \in H^{\N} \, | \, \mu^{-\N^{\ast}}(K_a^-) \geq 1-\sqrt{\delta}\}$$  satisfies $\mu^{\N}(K^{+}) \geq 1- \sqrt{\delta}$, if $\mu^\Z (K)>1-\delta$. Specializing to $K=K_{00}(\delta)$, we fix two elements $a,a' \in K^+$, so that the set 
$$K_{a,a'}^-:=K_a^- \cap K_{a'}^-$$ 
has $\mu^{-\N^\ast}$-measure larger than $1-2\sqrt{\delta}$.

\bigskip

For each $t \in \N$ and $w \in \Psi$, let $n_t(w)=\min\{n : \lambda_1(w,n) \geq t\}$. We then have 
\begin{equation}\label{equ:stopping}
|\lambda_1(w, n_t(w))-t| \leq \kappa,
\end{equation}
which is due to Proposition \ref{prop.dynnorm}.

\bigskip

\textbf{Drift argument}:
The output of this part is
\begin{proposition}\label{prop:drift}
For two futures $a,a'\in K_{00}^+(\delta)$ and two pasts $b,b' \in K_{a,a'}^-$, 
there exist sequences of natural numbers $m_\ell, m_\ell'\to\infty$ as $\ell\to\infty$, a point $\hat{\omega}\in K_0''(\delta)$ and $s(b,a,a'),\ s(b',a,a')\in D^{\pm}$ such that
$\nu_{T^{m_\ell}(b,a)}\to \nu_{\hat\omega}$ and
$$\nu_{\hat{\omega}}=s(b,a,a')^{-1}s(b',a,a')\nu_{\hat{\omega}},$$ 
where the element $s(b,a,a')$ is given by
\[ 
s(b,a,a')=\lim_{\ell\to\infty}\alpha(a_{m_{{\ell}}'}' \ldots a_0', \xi(b))\alpha(a_{m_{\ell}} \ldots a_0, \xi(b))^{-1},
 \]
and similarly for $s(b',a,a')$.
\end{proposition}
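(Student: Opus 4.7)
The plan is to exploit the fact that the fibre measure $\nu_w$ depends only on the past $w^-$, combined with the iterated equivariance identity
\[
\nu_{T^{m}(b,a)} \;=\; \alpha(a_{m-1}\cdots a_0,\xi(b))\,\nu_b,
\]
and the three analogous identities obtained from the pairs $(b,a'),(b',a),(b',a')$. Since the right-hand side depends on the future only through the finite product used in the stopping rule, if I can produce sequences $m_\ell,m'_\ell\to\infty$ along which all four shifted points $T^{m_\ell}(b,a), T^{m_\ell}(b',a), T^{m'_\ell}(b,a'), T^{m'_\ell}(b',a')$ converge in the compact continuity set $K_0''(\delta)$, with the first two converging to a common limit $\hat\omega$ and the last two to a common limit $\hat\omega'$, and if moreover $s(b,a,a'),s(b',a,a')$ exist as honest limits in $D^\pm$, then passing to the limit gives $\nu_{\hat\omega'}=s(b,a,a')\,\nu_{\hat\omega}=s(b',a,a')\,\nu_{\hat\omega}$, whence the claimed invariance of $\nu_{\hat\omega}$ under $s(b,a,a')^{-1}s(b',a,a')$.

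To construct the stopping times I would parametrise by $t\in\N$ and set $m(t):=n_t((b,a))$, $m'(t):=n_t((b,a'))$, so that by \eqref{equ:stopping} both satisfy $\lambda_1(\cdot,m(\cdot))=t+O(1)$. Because $(b',a)\in W^-_1((b,a))$, Lemma~\ref{lemma.div.est} transports this control to $\lambda_1((b',a),m(t))=t+O(1)$ whenever the relevant points lie in the compact sets fixed at the start of the proof; the same applies to $(b',a')$ and $m'(t)$. Since all four base points sit in $K_{00}(\delta)\subset K^{gen}(\delta)$, \eqref{equ:Kgen} tells me that for each of them the set of integers $n$ at which the $T^n$-image belongs to $K_0''(\delta)$ has density strictly greater than $1-\delta/2$. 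A union bound over the four orbits, combined with the linear relation $m(t),m'(t)\in[t/\kappa,t\kappa]$ from Proposition~\ref{prop.dynnorm} and the bounded discrepancy between stopping times coming from different initial pasts, yields an unbounded sequence of $t$'s along which all four shifted points simultaneously land in $K_0''(\delta)$; Lemma~\ref{lemma.relative.density.typical.orbit} is used to select the pasts $b,b'$ realising the generic Birkhoff behaviour if needed.

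Extracting a further subsequence $t_\ell$ along which the four images converge in the compact $K_0''(\delta)$, the identification $\hat\omega=\lim T^{m(t_\ell)}(b,a)=\lim T^{m(t_\ell)}(b',a)$ is automatic: the two bi-infinite sequences agree on all indices $k\geq -m(t_\ell)$ in the product topology on $H^\Z$, and $m(t_\ell)\to\infty$. The same argument produces a common $\hat\omega'$ for the $m'$-shifts. For the cocycle ratios, Corollary~\ref{corol.alpha.close.to.dynamical.cocycle} combined with $\lambda_1((b,a),m(t_\ell))=t_\ell+O(1)=\lambda_1((b,a'),m'(t_\ell))$ shows that $\overline\chi$ applied to the $D$-component of $\alpha((a')^{m'(t_\ell)},\xi(b))\,\alpha(a^{m(t_\ell)},\xi(b))^{-1}$ stays bounded; since $\overline\chi$ is a nonzero linear functional on the one-dimensional $\mathfrak d$, this component lies in a compact subset of $D$, and the discrete $\{\pm 1\}$-part is controlled by pigeonhole. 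One more subsequence extraction delivers $s(b,a,a')\in D^\pm$, and the same argument with $\xi(b')$ in place of $\xi(b)$ delivers $s(b',a,a')$. Continuity of $w\mapsto\nu_w$ on $K_{cont}\supset K_0''(\delta)$ and of the $D^\pm$-action on probability measures then allows me to pass to the limit in
\[
\nu_{T^{m'(t_\ell)}(b,a')} \;=\; \bigl[\alpha((a')^{m'(t_\ell)},\xi(b))\,\alpha(a^{m(t_\ell)},\xi(b))^{-1}\bigr]\,\nu_{T^{m(t_\ell)}(b,a)},
\]
yielding $\nu_{\hat\omega'}=s(b,a,a')\nu_{\hat\omega}$; the identical computation with $b'$ in place of $b$ yields $\nu_{\hat\omega'}=s(b',a,a')\nu_{\hat\omega}$, and comparing concludes the proof.

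I expect the most delicate step to be the simultaneous coordination in the previous paragraph: arranging one sequence $t_\ell\to\infty$ that is generic for four different base orbits and respects two different stopping-time schedules, while also giving convergence to the \emph{same} limit $\hat\omega$. The abelian one-dimensional nature of $D^\pm$ is crucial here, because it allows the cocycle ratios to be captured by a single real-valued invariant $\overline\chi$, whose near-equality across the two $t_\ell$-matched schedules is the only reason the limits $s(b,a,a')$ exist at all; in the nonabelian setting of \cite{eskin-lindenstraus.short,eskin-mirzakhani} the analogous step requires substantially more work, a simplification that is explicitly flagged in the introduction.
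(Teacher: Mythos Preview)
Your proposal is correct and follows essentially the same six-point drift strategy as the paper: use common stopping times driven by the dynamical norm, arrange via a density argument that all four shifted points land simultaneously in $K_0''(\delta)$, bound the cocycle ratios via Corollary~\ref{corol.alpha.close.to.dynamical.cocycle} and Lemma~\ref{lemma.div.est}, and pass to the limit using continuity on $K_{cont}$.

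There is one organisational difference worth noting. The paper initially assigns \emph{four} stopping schedules $n_{t_\ell}(w),n_{t_\ell}(w'),n_{t_\ell}(w''),n_{t_\ell}(w''')$, then shows $|n_{t_\ell}(w)-n_{t_\ell}(w'')|$ and $|n_{t_\ell}(w')-n_{t_\ell}(w''')|$ are bounded, and \emph{redefines} the times to the pairwise maxima so that $(b,a)$ and $(b',a)$ are shifted by the same amount; this extra step forces the introduction of the auxiliary set $\hat{\underline K}$ to keep the modified $D_\ell,D'_\ell$ bounded. You instead fix from the outset only two schedules $m(t)=n_t((b,a))$ and $m'(t)=n_t((b,a'))$ and apply them to both pasts $b,b'$; this is a genuine (small) simplification that bypasses the redefinition step and the set $\hat{\underline K}$ entirely. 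Both routes rely on the same density claim (the paper's first Claim) applied to mixed pairs $(\zeta,\zeta')$.

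One small correction: Lemma~\ref{lemma.relative.density.typical.orbit} plays no role in this proposition. The hypothesis $b,b'\in K_{a,a'}^-$ already forces all four points $(b,a),(b,a'),(b',a),(b',a')$ into $K_{00}(\delta)\subset K^{gen}(\delta)$, so no further selection of pasts is needed. That lemma is used only in the \emph{next} step of the paper, to propagate the invariance obtained for the single $\nu_{\hat\omega}$ to a positive-measure (and then full-measure) set of $\omega$'s.
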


We start the drift argument here. Set $w=(b,a)$, $w'=(b,a')$, $w''=(b',a)$, and $w'''=(b',a')$. 
\begin{claim}
There are constants $p(\delta)$ with $p(\delta)=O(\delta)$ as $\delta \to 0$ and $N_1(\delta) \in \N$ such that for any $\zeta, \zeta' \in K^{gen}(\delta) \cap \Psi$ and $T \geq N_1(\delta)$, we have 
\begin{equation}\label{eq.often.common.rec}
    \#\{t=1,\ldots, T : T^{n_t(\zeta)}\zeta' \notin K_0''(\delta)\}<p(\delta)T.
\end{equation}
\end{claim}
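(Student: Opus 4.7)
The plan is to exploit the linear growth of the dynamical norm $\lambda_1(\zeta,\cdot)$ from Proposition \ref{prop.dynnorm} together with the Birkhoff-type density statement built into the definition of $K^{gen}(\delta)$ for $\zeta'$. The intuition is that the stopping times $n_t(\zeta)$, as $t$ ranges over $\{1,\dots,T\}$, are squeezed into a window $[1,M]$ of size $M=O(T)$, and on any long enough window the orbit $(T^n\zeta')_n$ spends all but a $\delta/2$-fraction of the time inside $K_0''(\delta)$. Combining the two gives the claim, up to a multiplicity correction that is uniformly bounded in $\delta$.

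First I would pin down the range of the map $t\mapsto n_t(\zeta)$. Since $\zeta\in\Psi$, Proposition \ref{prop.dynnorm} gives $\lambda_1(\zeta,n)\in[n/\kappa,\kappa n]$, and by \eqref{eq.lip.dynnorm} the sequence $\lambda_1(\zeta,n)$ is strictly increasing with successive differences in $[1/\kappa,\kappa]$. Letting $M:=n_T(\zeta)$, the defining inequalities $\lambda_1(\zeta,M)\geq T$ and $\lambda_1(\zeta,M-1)<T$ combined with the linear growth yield $M\in[T/\kappa,\kappa T+1]$; in particular $\{n_t(\zeta):1\le t\le T\}\subset\{1,\dots,M\}$.

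Next I would handle the fact that $t\mapsto n_t(\zeta)$ need not be injective: because $\lambda_1(\zeta,\cdot)$ may overshoot integer thresholds, several consecutive values of $t$ can share the same $n_t(\zeta)$. Concretely, for each $n\in\{1,\dots,M\}$, setting $\lambda_1(\zeta,0):=0$, one has
\[
\{t\in\{1,\dots,T\}:n_t(\zeta)=n\}=\{t\in\Z:\lambda_1(\zeta,n-1)<t\le\lambda_1(\zeta,n)\},
\]
and again by Proposition \ref{prop.dynnorm} the right-hand interval has length at most $\kappa$, hence contains at most $\lceil\kappa\rceil$ integers.

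Finally I would set $N_1(\delta):=\kappa N(\delta)$; then for $T\ge N_1(\delta)$ we have $M\ge T/\kappa\ge N(\delta)$, so the membership of $\zeta'$ in $K^{gen}(\delta)$ applied with $n=M$ gives
\[
\#\{n\in\{1,\dots,M\}:T^n\zeta'\notin K_0''(\delta)\}\le(\delta/2)M\le(\delta/2)(\kappa T+1)\le\kappa\delta T
\]
once $T$ is large enough. Weighting each bad $n$ by its at-most-$\lceil\kappa\rceil$ preimages from the previous step,
\[
\#\{t=1,\dots,T:T^{n_t(\zeta)}\zeta'\notin K_0''(\delta)\}\le\lceil\kappa\rceil\cdot\kappa\delta\,T,
\]
so the claim holds with $p(\delta):=\lceil\kappa\rceil\kappa\,\delta=O(\delta)$. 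There is no real obstacle: all ingredients (linear growth of $\lambda_1$, the strict increase via \eqref{eq.lip.dynnorm}, and the frequency bound in \eqref{equ:Kgen}) are already in place; the only mild point is tracking the multiplicity $\lceil\kappa\rceil$ coming from the non-injectivity of $t\mapsto n_t(\zeta)$, which is absorbed into the constant $p(\delta)$.
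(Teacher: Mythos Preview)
Your proof is correct and follows essentially the same approach as the paper: both use the linear growth bound $n_T(\zeta)\le \kappa T$ from Proposition~\ref{prop.dynnorm}, apply the density estimate from \eqref{equ:Kgen} for $\zeta'$ at scale $n_T(\zeta)$, and absorb the bounded multiplicity of the map $t\mapsto n_t(\zeta)$ into the constant, arriving at $p(\delta)$ of order $\kappa^2\delta$. Your treatment of the multiplicity via $\lceil\kappa\rceil$ is slightly more careful than the paper's, but the argument is the same.
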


\begin{proof}
By \eqref{equ:Kgen}, we have for $n\geq N(\delta)$
\[\#\{k=1,\cdots, n : T^k\zeta'\notin K_0''(\delta) \}<\delta n/2. \]
By Lipschitz property \eqref{eq.lip.dynnorm} for $\zeta\in\Psi$, we have $n_T(\zeta)\in [1/\kappa,\kappa] T$. Hence for $T>N_1(\delta)=\kappa N(\delta)$, using Lipschitz property \eqref{eq.lip.dynnorm}, we have
\[\#\{t=1,\ldots, T : T^{n_t(\zeta)}\zeta' \notin K_0''(\delta)\}\leq \kappa \#\{k=1,\ldots, n_T(\zeta) : T^{k}\zeta' \notin K_0''(\delta)\}<\delta\kappa^2T/2, \]
proving the claim \eqref{eq.often.common.rec} with $p(\delta)=\delta \kappa^2/2$.
\end{proof}

Therefore, choosing $\delta>0$ small enough so that to have $p(\delta)<1/16$ and applying \eqref{eq.often.common.rec} with all possible choices of $\omega,\omega' \in \{w, w', w'',w'''\}$, we find a sequence of positive integers $t_\ell$ tending to infinity as $\ell \to \infty$ and such that for every $\ell \in \N$, and $\omega,\omega' \in \{w, w', w'',w'''\}$, we have 
\begin{equation}\label{eq.common.return.times}
T^{n_{t_\ell}(\omega)}\omega'  \in K_0''(\delta).
\end{equation}

\begin{claim}
For every $\ell \in \N$, 
\begin{equation}\label{eq.control.nt.difference}
|n_{t_\ell}(w)-n_{t_\ell}(w'')| \, \, \, \text{and} \, \, \, |n_{t_\ell}(w')-n_{t_\ell}(w''')| \, \, \, \text{are bounded above by} \, \, \kappa(2\kappa+C),
\end{equation}
where $C=C(\delta,1)$ is the constant given by Lemma \ref{lemma.div.est}.
\end{claim}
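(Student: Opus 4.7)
The plan is to combine the divergence estimate of Lemma \ref{lemma.div.est} with the stopping-time property \eqref{equ:stopping} and the Lipschitz estimate \eqref{eq.lip.dynnorm} from Proposition \ref{prop.dynnorm}. First, I would observe that the two pairs in question lie on common local stable leaves: since $w=(b,a)$ and $w''=(b',a)$ share the same future $a$, we have $w''\in W^-_1(w)$, and similarly $w'''\in W^-_1(w')$. This is the one point worth being a bit careful about, because Lemma \ref{lemma.div.est} is stated only for pairs on a local stable leaf.

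Next, I would verify that the hypotheses of Lemma \ref{lemma.div.est} (with $t_0=1$) apply at time $t=n_{t_\ell}(w)$: by construction $w,w''\in K''_0(\delta)\subseteq K_0(\delta)\subseteq K'(\delta,1)$, and by the common return property \eqref{eq.common.return.times} chosen through the pigeonhole argument in \eqref{eq.often.common.rec}, both $T^{n_{t_\ell}(w)}w$ and $T^{n_{t_\ell}(w)}w''$ belong to $K''_0(\delta)\subseteq K'(\delta,1)$. Lemma \ref{lemma.div.est} therefore yields
\[
|\lambda_1(w,n_{t_\ell}(w))-\lambda_1(w'',n_{t_\ell}(w))|\leq C.
\]

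Then the stopping-time estimate \eqref{equ:stopping} gives $|\lambda_1(w,n_{t_\ell}(w))-t_\ell|\leq\kappa$ and $|\lambda_1(w'',n_{t_\ell}(w''))-t_\ell|\leq\kappa$, and a triangle inequality yields
\[
|\lambda_1(w'',n_{t_\ell}(w))-\lambda_1(w'',n_{t_\ell}(w''))|\leq C+2\kappa.
\]
Finally, the lower Lipschitz bound in \eqref{eq.lip.dynnorm}, applied to the single trajectory $w''$, gives
\[
|\lambda_1(w'',n_{t_\ell}(w))-\lambda_1(w'',n_{t_\ell}(w''))|\geq \tfrac{1}{\kappa}|n_{t_\ell}(w)-n_{t_\ell}(w'')|,
\]
so that $|n_{t_\ell}(w)-n_{t_\ell}(w'')|\leq \kappa(2\kappa+C)$, which is the claimed bound. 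The same chain of inequalities, applied to the leaf-equivalent pair $(w',w''')$, controls $|n_{t_\ell}(w')-n_{t_\ell}(w''')|$ identically.

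There is no real obstacle here: the argument is a direct concatenation of the three previously established estimates, with the substantive work having already been done in setting up the compact sets $K_0''(\delta)$ and the common return times $t_\ell$. The only step where one could slip is the application of Lemma \ref{lemma.div.est}, which requires being careful that $w''$ lies on the \emph{stable} leaf of $w$ (i.e.\ shared future, differing past), and that both endpoints of the time interval are in $K'(\delta,1)$; both are built into the construction.
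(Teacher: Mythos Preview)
Your proposal is correct and follows essentially the same approach as the paper: observe that $w''\in W^-_1(w)$ and $w'''\in W^-_1(w')$, apply Lemma~\ref{lemma.div.est} (using that $w,w''$ and their images under $T^{n_{t_\ell}(w)}$ lie in $K_0''(\delta)\subseteq K'(\delta,1)$ by \eqref{eq.common.return.times}), combine with the stopping-time bound \eqref{equ:stopping} via the triangle inequality, and finish with the lower Lipschitz bound \eqref{eq.lip.dynnorm}. Your verification of the hypotheses of Lemma~\ref{lemma.div.est} is in fact slightly more explicit than the paper's.
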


\begin{proof}
Indeed, by construction, we have $w'' \in W^-_1(w)$, $w''' \in W^-_1(w')$. Moreover, thanks to \eqref{eq.common.return.times} and the fact that $K_0''(\delta)$ is contained in $K'(\delta,1)$, we can apply Lemma \ref{lemma.div.est} and deduce that $$|\lambda_1(w,n_{t_\ell}(w))-\lambda_1(w'',n_{t_\ell}(w))| \leq C.$$ 
On the other hand, by \eqref{equ:stopping}, we have $$|\lambda_1(w,n_{t_\ell}(w))-t_\ell| \leq \kappa,\ |\lambda_1(w'',n_{t_\ell}(w''))-t_\ell| \leq \kappa,$$ so that $$|\lambda_1(w,n_{t_\ell}(w))-\lambda_1(w'',n_{t_\ell}(w''))| \leq 2\kappa.$$ This implies that $$|\lambda_1(w'',n_{t_\ell}(w''))-\lambda_1(w'',n_{t_\ell}(w))| \leq 2 \kappa+C.$$ 
Referring once more to the Lipschitz property \eqref{eq.lip.dynnorm}, we deduce that $|n_{t_\ell}(w'')-n_{t_\ell}(w)| \leq \kappa(2\kappa+C)$ as claimed. Clearly, the same argument applies to $n_{t_\ell}(w')$ and $n_{t_\ell}(w''')$ proving \eqref{eq.control.nt.difference}.
\end{proof}

\bigskip

It then follows by \eqref{eq.comparison.dyn.norm}, construction of $K_0''(\delta)$, and \eqref{equ:stopping} that for every $\ell \in \N$, we have 
\begin{equation}\label{eq.bdd.drift.ww'}
\begin{split}
&|\overline{\chi}(\overline{\alpha}(a_{n_{t_\ell}(w')}' \ldots a_0', \xi(b))- \overline{\chi}(\overline{\alpha}(a_{n_{t_\ell}(w)} \ldots a_0, \xi(b))| \\
&\leq |\lambda_1(w',n_{t_\ell}(w'))-\lambda_1(w,n_{t_\ell}(w))|+2C(\delta) \leq 2 \kappa + 2 C(\delta),
\end{split}
\end{equation}
and similarly,
\begin{equation}\label{eq.bdd.drift.w''w'''}
|\overline{\chi}(\overline{\alpha}(a_{n_{t_\ell}(w''')}' \ldots a_0', \xi(b'))- \overline{\chi}(\overline{\alpha}(a_{n_{t_\ell}(w'')} \ldots a_0, \xi(b'))| \leq 2 \kappa + 2 C(\delta).
\end{equation}

\bigskip

Now thanks to the fact that $\nu_w=\nu_{w'}$ (since $w$ and $w'$ have the same past),
using the equivariance relation \eqref{eq.equiv.cocycle} at times $n_{t_\ell}(w)$ and $n_{t_{\ell}}(w')$, we get
\begin{equation}\label{eq.use.equiv.1}
\nu_{T^{n_{t_{\ell}}(w')}w'}=\underbrace{\left(\alpha(a_{n_{t_{\ell}}(w')}' \ldots a_0', \xi(b))\alpha(a_{n_{t_\ell}(w)} \ldots a_0, \xi(b))^{-1}\right)}_{=:D_\ell}\underbrace{\alpha(a_{n_{t_\ell}(w)} \ldots a_0, \xi(b)) \nu_w}_{\nu_{T^{n_{t_{\ell}}(w)}w}},
\end{equation}
and similarly,
\begin{equation}\label{eq.use.equiv.1'}
\nu_{T^{n_{t_{\ell}}(w''')}w'''}=\underbrace{\left(\alpha(a_{n_{t_{\ell}}(w''')}' \ldots a_0', \xi(b'))\alpha(a_{n_{t_\ell}(w'')}\ldots a_0, \xi(b'))^{-1}\right)}_{=:D'_\ell}\underbrace{\alpha(a_{n_{t_\ell}(w'')} \ldots a_0, \xi(b')) \nu_{w''}}_{\nu_{T^{n_{t_{\ell}}(w'')}w''}}.
\end{equation}

Here, thanks to, respectively \eqref{eq.bdd.drift.ww'} and \eqref{eq.bdd.drift.w''w'''}, the sequences $D_\ell$ and $D_\ell'$ are bounded. Moreover, by construction (see \eqref{eq.common.return.times}) $T^{n_{t_{\ell}}(\zeta')}\zeta$ belong to the compact continuity set $K_{cont}$ for every $\zeta,\zeta' \in \{w,w',w'',w'''\}$. In particular, there exists a subsequence of $t_\ell$ such that for any such $\zeta,\zeta'$ and for some $\hat{\omega} \in H^\Z$, we have
\begin{equation}\label{eq.use.continuity1}
T^{n_{t_\ell}(\zeta')}\zeta \to_{\ell \to \infty} \hat{\omega} \implies \nu_{T^{n_{t_{\ell}}(\zeta')}\zeta} \to_{\ell \to \infty} \nu_{\hat{\omega}}.
\end{equation} 

\bigskip


Now we redefine the time. Let $m_{t_\ell}(w'')=m_{t_\ell}(w)=\max\{n_{t_\ell}(w),n_{t_\ell}(w'') \}$ and similarily $m_{t_\ell}(w''')=m_{t_\ell}(w')=\max\{n_{t_\ell}(w'),n_{t_\ell}(w''') \}$.
Indeed, by construction of $n_{t_\ell}$'s and $m_{t_\ell}$'s, the convergence property is unaffected: see \eqref{eq.use.continuity1} and the choice of $\zeta,\zeta'\in\{w,w',w'',w''' \}$.
Moreover, thanks to the equivariance property, we still have the relations \eqref{eq.use.equiv.1} and \eqref{eq.use.equiv.1'} for these modified times $m_{t_\ell}$'s. Finally, the differences between $n_{t_\ell}(w)$ and $n_{t_\ell}(w'')$, and similarly, between $n_{t_\ell}(w')$ and $n_{t_\ell}(w''')$ are bounded, (see \eqref{eq.control.nt.difference}). Since we have chosen $K_0''(\delta)$ so that it is contained in the set $ \hat{\underline{K}}$, the modified differences --- as appearing in \eqref{eq.use.equiv.1} and \eqref{eq.use.equiv.1'} after modifying the times --- $D_\ell$ and $D_\ell'$ are still bounded. 

\bigskip

Notice that since $w$ and $w''$; and similarly, $w'$ and $w'''$ have the same futures, for any sequence of $\ell$'s such that $T^{m_{t_\ell}(w)}w \to \hat{w}$ for some $\hat{w}$, we also have $T^{m_{t_\ell}(w'')}w''=T^{m_{t_\ell}(w)}w''\to \hat{w}$ (and similarly for the pair $w'$ and $w'''$). As a conclusion passing to a subsequence of $t_\ell$'s (that we still denote by $t_\ell$) so that we have\\[2pt] 
\indent \textbullet ${}$  $D_\ell \to s(b,a,a')$ for some $s(b,a,a') \in D^{\pm}$ and similarly, $D'_\ell \to s(b',a,a')$ for some $s(b',a,a') \in D^{\pm}$, and\\[2pt]
\indent \textbullet ${}$ $T^{m_{t_\ell}(w)}w \to \hat{w} \in K_0''(\delta)$ and $T^{m_{t_\ell}(w')}w' \to \hat{w}' \in K_0''(\delta)$,\\[2pt]
we deduce from \eqref{eq.use.equiv.1} and \eqref{eq.use.equiv.1'} that
$$
\nu_{\hat{w}'}=s(b,a,a')\nu_{\hat{w}} \quad \text{and} \quad  \nu_{\hat{w}'}=s(b',a,a')\nu_{\hat{w}},
$$
and hence we get
\begin{equation}\label{eq.take.square.if.pgl}
\nu_{\hat{w}}=s(b,a,a')^{-1}s(b',a,a') \nu_{\hat{w}}.
\end{equation}

By letting $m_\ell=m_{t_\ell}(w)$ and $m_\ell'=m_{t_\ell}(w')$, we obtain Proposition \ref{prop:drift} stated in the beginning of this part.

\bigskip

\textbf{From invariance of one typical point to the full set}: By the equivariance property and commutativity, for every $t \in \N$, the measure $\nu_{T^t \hat{w}}$ is also invariant by $s(b,a,a')^{-1}s(b',a,a')$. On the other hand, we have $\hat{w} \in K_0''(\delta)$ and recall that the latter set is contained in $K_0'(\delta)$. So letting $K_{acc}$ be the set of elements $\omega$ in $K_0(\delta)$ such that there exists a sequence $n_m \to \infty$ such that $T^{n_m}\hat{w} \in K_0(\delta)$ and $T^{n_m}\hat{w} \to \omega$, by the definition of $K_0'(\delta)$, we get that the $\mu^\Z$-measure of $K_{acc}$ is positive. Since $K_0(\delta)$ is contained in the continuity set, this implies that for every $\omega \in K_{acc}$, the measure $\nu_{\omega}$ is invariant by $s(b,a,a')^{-1}s(b',a,a')$. By ergodicity and commutativity (since the set of $\omega$ such that $\nu_{\omega}$ is invariant by an element of $D$ is shift-invariant), this entails that
$$
\nu_{\omega}=s(b,a,a')^{-1}s(b',a,a')\nu_{\omega} \quad \text{for $\mu^\Z$-a.e. $\omega \in H^\Z$}.    
$$

\bigskip

\textbf{Constructing arbitrary small drift}:
Since for $\mu^\Z$-a.e.~ $\omega$ the stability group of $\nu_{\omega}$ is closed, to prove the hypothesis of Claim 0 (and hence Theorem \ref{thm.measure.class.geod}), it suffices to find  sequences $\delta_n>0$, couples of futures $a_n,a'_n \in K^+:= K^+_{00}(\delta_n)$ and  couples of pasts $b_n,b'_n \in K_{a_n,a'_n}^{-}$ such that 
\begin{equation}\label{eq.completes.drift}
\id \neq (s(b_n,a_n,a'_n)^{-1}s(b'_n,a_n,a'_n))^2 \to \id
\end{equation}
as $n \to \infty$. Here we take square to make sure the invariance is in $D$ instead of $D^{\pm}$.

Recall that $a,a'$ are two different points in $K_{00}^+(\delta)$. Due to Lemma \ref{lemma.random.lin.form} and the set $K_{00}^+(\delta)$ having a positive measure, we can suppose that the corresponding linear forms $\varphi_a$ and $\varphi_{a'}$ are not colinear. The set $K_{a,a'}^-$ has measure greater than $1-2\sqrt{\delta}$. 
Now given $\delta'>0$, consider the compact set $K_{a,a'}(\delta'):=K_a(\delta')\cap K_{a'}(\delta')$ given by Lemma \ref{lemma.random.lin.form}. Clearly, if $\delta$ and $\delta'$ are small enough, the set $K_{a,a'}(\delta') \cap K_{a,a'}^-$ has positive measure, bounded below by $1-2\delta'-2\sqrt{\delta}$. 
On the other hand, by \textit{drift argument} (Proposition \ref{prop:drift}), for every $b,b' \in K_{a,a'}(\delta') \cap K_{a,a'}^-$, there exist sequences of natural numbers $m_\ell, m_\ell'$ tending to infinity as $\ell \to \infty$ such that
\begin{equation}\label{eq.cross.ratio.limit}
\overline{\chi}\left(\log(s(b,a,a')^{-1}s(b',a,a'))\right)=\lim_{\ell \to \infty} \log \left(\frac{\|a'_{m_{\ell}'} \ldots a'_0 v_{b'}\|\|a_{m_{\ell}} \ldots a_0 v_{b}\|}{\|a'_{m_{\ell}'} \ldots a'_0 v_{b}\|\|a_{m_{\ell}} \ldots a_0 v_{b'}\|} \right).
\end{equation}
By Lemma \ref{lemma.random.lin.form}, we have for two linear forms $\varphi=\varphi_a$ and $\varphi'=\varphi_{a'}$ of unit norm on $V$ that 
\begin{equation}\label{eq.loglin.form.ratio}
\overline{\chi}\left(\log(s(b,a,a')^{-1}s(b',a,a'))\right)= \log \frac{|\varphi'(v_{b'})\varphi(v_b)|}{|\varphi'(v_b)\varphi(v_{b'})|}
\end{equation}
and $|\varphi(v_b)|,|\varphi'(v_b)|>\epsilon'>0$, where $\epsilon'=\epsilon'(\delta')$ is given by Lemma \ref{lemma.random.lin.form}.

Let $\epsilon>0$ be given. 
Since $\mu^{-\N^\ast}(K_{a,a'}(\delta') \cap K_{a,a'}^-)>0$ and the Furstenberg measure is atomless, we can find two different points $b,b'$ in $K_{a,a'}(\delta') \cap K_{a,a'}^-$ with $v_b\wedge v_{b'}\neq 0$ and
$d(v_b,v_{b'})<\epsilon$. 

\begin{claim}
      If $2\epsilon<(\epsilon')^2$, the drift element associated to $a,a',b,b'$ (as in  \eqref{eq.loglin.form.ratio}) is non-trivial and has size $O_{\epsilon'}(\epsilon)$. 
\end{claim}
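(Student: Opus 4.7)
The plan is to establish both halves of the claim—the size bound and the non-triviality—directly from the closed-form expression \eqref{eq.loglin.form.ratio}. Write $\varphi=\varphi_a$ and $\varphi'=\varphi_{a'}$; by the selection made just before the claim, $\varphi$ and $\varphi'$ are non-collinear unit forms (applying the last statement of Lemma \ref{lemma.random.lin.form} to $K_{00}^+(\delta)$), and $|\varphi(v_b)|,|\varphi'(v_b)|>\epsilon'$. Choose unit representatives so that $\|v_b-v_{b'}\|<\epsilon$; by shrinking the projective neighbourhood we may replace the projective metric by the Euclidean one on the sphere.

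For the size bound, I would split
\[
\log\left|\frac{\varphi'(v_{b'})\varphi(v_b)}{\varphi'(v_b)\varphi(v_{b'})}\right|
=\log\left|\frac{\varphi(v_b)}{\varphi(v_{b'})}\right|+\log\left|\frac{\varphi'(v_{b'})}{\varphi'(v_b)}\right|
\]
and control each factor via $\bigl||\varphi(v_b)|-|\varphi(v_{b'})|\bigr|\leq\|\varphi\|\|v_b-v_{b'}\|\leq\epsilon$, together with the derived lower bound $|\varphi(v_{b'})|\geq\epsilon'-\epsilon\geq\epsilon'/2$. The latter is exactly where the hypothesis $2\epsilon<(\epsilon')^2\leq\epsilon'$ is used: since $\epsilon'\leq 1$, it guarantees $\epsilon\leq\epsilon'/2$. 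Hence each ratio is $1+O(\epsilon/\epsilon')$, and the sum of the logs is $O(\epsilon/\epsilon')=O_{\epsilon'}(\epsilon)$, as claimed.

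For non-triviality, the expression vanishes precisely when $[v_{b'}]$ lies in the union $H^+_b\cup H^-_b$ of the two projective hyperplanes cut out by $\varphi'(v_b)\,\varphi(\cdot)\pm\varphi(v_b)\,\varphi'(\cdot)=0$. Each is a proper projective hyperplane in $\P(V)$: linear independence of $\varphi,\varphi'$ combined with $(\varphi(v_b),\varphi'(v_b))\neq 0$ forces each of the two defining linear forms to be non-zero. The push-forward of $\overline{\nu}_F$ to $\P(V)$ under $\xi\mapsto[v_\xi]$ is the unique proximal $\mu$-stationary measure on $\P(V)$, and since $\mu$ is Zariski-dense in $H$ and $V$ is irreducible, by Furstenberg--Guivarc'h--Raugi this measure assigns mass zero to every proper projective subspace. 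Consequently $\{b':[v_{b'}]\in H^+_b\cup H^-_b\}$ is a $\mu^{-\N^\ast}$-null set. Since $[v_b]$ belongs to the support of the proximal measure, every neighbourhood of $[v_b]$ has positive measure; intersecting this with the positive-measure set $K_{a,a'}(\delta')\cap K_{a,a'}^-$ and then removing the null set above yields a $b'$ with the required closeness, linear independence, and $[v_{b'}]\notin H^+_b\cup H^-_b$, for which the drift is non-trivial.

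The main technical input is the null-measure property of proper projective subspaces for the proximal stationary measure on $\P(V)$—this is the one non-elementary ingredient, but it is classical in the present Zariski-dense irreducible setting. With both pieces in hand, letting $\epsilon\to 0$ along a sequence (and adjusting $\delta,\delta'$ accordingly) produces pairs $(b_n,b_n')$ realizing \eqref{eq.completes.drift}, i.e.\ non-trivial drift elements whose additive logarithm tends to $0$, which closes the proof of Theorem \ref{thm.measure.class.geod}.
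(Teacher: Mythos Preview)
Your size bound is correct and essentially the same elementary estimate as the paper's, though the paper packages it more compactly via the wedge identity
\[
\frac{\varphi(v_b)\varphi'(v_{b'})}{\varphi(v_{b'})\varphi'(v_b)}-1=\frac{(\varphi,\varphi')(v_{b'}\wedge v_b)}{\varphi(v_{b'})\varphi'(v_b)},
\]
bounding the right-hand side by $\|v_b\wedge v_{b'}\|/(\epsilon')^2\leq\epsilon/(\epsilon')^2<1/2$ and then using $|\log(1+t)|\leq 2|t|$. This is where the precise hypothesis $2\epsilon<(\epsilon')^2$ enters; your argument actually only needs the weaker $2\epsilon<\epsilon'$.

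The genuine difference is in the non-triviality step. The paper does not invoke the null-measure property of hyperplanes at all: because the size bound forces the cross-ratio into $(1/2,3/2)$, it suffices to show the ratio is $\neq 1$, and the wedge identity does this immediately from the two pre-imposed conditions $\varphi\not\parallel\varphi'$ and $v_b\wedge v_{b'}\neq 0$ (so $(\varphi,\varphi')(v_b\wedge v_{b'})\neq 0$, at least when $V$ is taken two-dimensional, which is the natural choice here). Your measure-theoretic route---avoiding the two hyperplanes $H^\pm_b$---is also valid and has the advantage of working in any irreducible $V$, but it amounts to strengthening the selection of $b'$ made just before the Claim rather than deducing non-triviality from the conditions already fixed. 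In practice this is harmless for the global argument, but you should note that you also need $[v_b]$ to lie in the support of the push-forward of $\mu^{-\N^\ast}|_{K_{a,a'}(\delta')\cap K_{a,a'}^-}$, which is an extra (generically satisfied) constraint on $b$ you are silently using.
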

\begin{proof}
This is because
\[\frac{\varphi(v_b)\varphi'(v_{b'})}{\varphi(v_{b'})\varphi'(v_b)}-1=\frac{(\varphi,\varphi')(v_{b'}\wedge v_b)}{\varphi(v_{b'})\varphi'(v_b)}, \]
where $(\varphi,\varphi')$ is a linear form on $\wedge^2 V$ given by 
\[(\varphi,\varphi')(v\wedge v')=\varphi(v)\varphi'(v')-\varphi(v')\varphi'(v). \]
Non-triviality comes from the choice of $a,a'$ and $b,b'$, that $\varphi$ and $\varphi'$ are not colinear and $v_b\wedge v_{b'}\neq 0$.

By taking $\epsilon<(\epsilon')^2/2$, we have
\[ \frac{|(\varphi,\varphi')(v_{b'}\wedge v_b)|}{|\varphi(v_{b'})\varphi'(v_b)|}\leq \frac{\|v_{b'}\wedge v_b\|}{| \varphi(v_{b'})\varphi'(v_b)|}\leq \epsilon/(\epsilon')^2<1/2. \]
Applying the inequality $|\log(1+t)|\leq 2|t|$ for $|t|<1/2$, we obtain
\[\left|\log \frac{\varphi'(v_{b'})\varphi(v_b)}{\varphi'(v_b)\varphi(v_{b'})}\right|\leq 2 \frac{|(\varphi,\varphi')(v_{b'}\wedge v_b)|}{|\varphi(v_{b'})\varphi'(v_b)|}\leq 2\epsilon/(\epsilon')^2. \]
The proof of the claim is complete.
\end{proof}
\bigskip

Fixing $\epsilon'>0$ and choosing $\epsilon$ arbitrarily small --- i.e.~ taking a sequence $\epsilon_n \to 0$ and associated couples $b_n,b'_n \in K_{a,a'}(\delta') \cap K_{a,a'}^-$ --- we obtain \eqref{eq.completes.drift} and conclude the proof.
\end{proof}

\subsection{Case 2.3: The remaining case}\label{subsec.remaining.case}

In this part, we will restrict ourselves to a slightly more specific situation; we will assume that the ambient group $G$ is $\PGL_n(\R)$; the subgroups $H, Q,R,R_0$ have the same meaning as before. 
The group $S = Q/R_0$ is a quotient of a product of $\PGL_{k_i}(\R)$'s. 

We are in Case 2.3, so we suppose $H$ is positioned so that $Q_H:=Q \cap H$ is a parabolic subgroup of $H$ and that $Q_H^\circ \cap R_0$ is trivial. 
In light of Proposition \ref{prop.decomposable} and Definition \ref{def.decomposable} of a decomposable action, it might be tempting at first sight to think that in Case 2.3, the morphism extends and we are in the decomposable situation. However, it turns out this is not the case and whether the morphism can extend depends for example on the irreducibility of the action of $H$ on $\P(\R^n)$. We signal at this point that in this paper we are not able to get a characterization of when we are in the decomposable case; as we shall see (Case 2.3.a), if $H$ acts projectively irreducibly on $\R^n$, we will be able to ensure this. Without this irreducibility assumption (Case 2.3.b), the description of what may happen is widely open; we content with some examples.


\subsubsection{Case 2.3.a: Irreducible $H$}

In this case, the irreducibility of $H$ implies that there is a unique $H$-compact orbit $\calC$ on the flag variety $G/Q$.

\begin{proof}[Proof of Theorem \ref{thm.irreducible.H.decompsable}]
If $R_0=Q$, then the fiber is trivial and we are in Case 2.1.

Otherwise, $S\simeq Q/R_0$ is a nontrivial semisimple group. In this situation, one can verify directly that $Q_H\cap R_0$ is trivial. For example by the explicit computation given in the following proof. So we are in Case 2.3.

Let $D<Q_H$ be a rank-one $\R$-split torus in $Q_H<H$ and $U$ be the unipotent radical of $Q_H$. We denote by $\mathfrak{u}$, $\mathfrak{d}$, and $\mathfrak{h}$ the Lie algebras of $U$, $D$, and $H$, respectively. Fix a Weyl chamber $\mathfrak{d}^+$ in $\mathfrak{d}$ and two elements $x \in \mathfrak{d}^+$ and $e \in \mathfrak{u}$ such that $[x,e]=2e$, where $[.,.]$ denotes the Lie bracket in $\mathfrak{h}$. We consider the Lie algebra representation of $\mathfrak{h}$ induced by the irreducible representation of $H$ coming from the embedding $H < \PGL_n(\R)$. By the representation theory of $\mathfrak{sl}_2(\R)$, the space $\R^n$ writes as a sum of a string of one-dimensional weight spaces of $\mathfrak{d}$, we denote them by $V_1,V_2,\ldots,V_n$. They are ordered in increasing order with respect to the order on weights of $D$ coming from the choice of $\mathfrak{d}^+$. The elements of $\mathfrak{u}$ act as raising operators, i.e.~ for any non-zero $e' \in \mathfrak{u}$, we have $e' V_i =V_{i+1}$ if $i\neq n$ and $e'V_n=0$.

Let $W_1<W_2<\ldots<W_k=\R^n$ be the maximal flag preserved by $Q$. Since the diagonal subgroup $D$ is contained in $Q_H$, each space $W_i$ is also preserved by $\mathfrak{d}$ and hence each $W_i$ is a sum of the weight spaces $V_i$'s. Moreover, since $U$ is also contained in $Q$ (and hence preserves $W_i$'s) and $\mathfrak{u}$ acts as raising operator for $\mathfrak{d^+}$ in the Lie algebra representation, it follows that $W_i=V_n \oplus \ldots \oplus V_{n-k_i+1}$, where $k_1<k_2<\ldots<k_j=n$ are the dimensions of $W_1,W_2,\ldots$ respectively. We also set $k_0=0$ and set $m_i=k_i-k_{i-1}$ for $i=1,\ldots,k$. The group $S$ is then a quotient of the product $\prod_{i=1}^j S_i$ where $S_i \simeq \PGL_{m_i}(\R)$. The product $\Pi_i S_i \simeq \Pi_i\PGL_{m_i}(\R)=Q/(R_0')$, where $R_0'$ is the solvable radical of $Q$. We have a natural projection from $\prod_i S_i=Q/(R_0')\to S=Q/R_0$. The projection $Q_H$ to $S=Q/R_0$ factors through $\prod_i S_i=Q/(R_0')$. Therefore, to extend the morphism to $S$, we only need to extend the morphism from $Q_H$ to $\prod_i S_i$.

Let $\mathfrak{s}_i$ be the Lie algebra $S_i$'s. The Lie algebra morphism from the Lie algebra of $Q_H$ to $\mathfrak{s}_i$ coming from the morphism $Q_H \to \prod_i S_i$ is the morphism obtained by extending
$$
x \mapsto \begin{pmatrix}
m_i-1 & 0 & \cdots  & & & 0 \\
0 & m_i-3 &  &  & & \vdots   \\
\vdots &   &   \ddots & &  \\
 & &  &  &   -m_i+3 & 0  \\
0 &  &  &  & 0 & -m_i+1
\end{pmatrix}_{m_i \times m_i}$$
$$  e \mapsto \begin{pmatrix}
0 & 1 & 0 & \cdots & 0 \\
 & 0 & 1 &  &  \\
 &  & \ddots  & \ddots &  \\
 & & &  0 & 1\\
0 & &  & \cdots  & 0 &  \\
\end{pmatrix}_{m_i \times m_i}\\
.
$$
To extend this morphism to $\mathfrak{h} \to \mathfrak{s}_i$, let $f$ be an element of $\mathfrak{h}$ so that $(e,x,f)$ is an $\mathfrak{sl}_2$-triple, i.e.~ $[x,f]=-2f$ and $[e,f]=x$. Mapping the element $f$ to the element
$$
\begin{pmatrix}
0 &  &  &  &  \\
(m_i-1) & 0 &  &  &  \\
 & 2(m_i-2)  & 0  &  &  \\
& & \ddots &  \ddots & & \\
 & & & (m_i-2)2&  0 &  & \\
 & & & & (m_i-1)   & 0  \\
\end{pmatrix}_{m_i \times m_i}
$$
of $\mathfrak{s}_i=\mathfrak{pgl}_{m_i}(\R)$, a direct calculation (see e.g.~ \cite[\S 3.7]{neil-ginzburg.book})
shows that we obtain a Lie algebra morphism $\mathfrak{h} \to \mathfrak{s}_i$ for each $i=1,\ldots,j$. We hence get a morphism $\mathfrak{h} \to \bigoplus_i\mathfrak{s}_i$ which gives rise to an algebraic morphism $H \to \prod_i S_i$ extending the initial morphism $Q_H \to \prod_i S_i$. (For the $\PGL_2(\R)$ case, notice that an irreducible algebraic representation from $\SL_2(\R)$ to $\PGL_{m_i}(\R)$ always induces a representation of $\PGL_2(\R)$)

Therefore Proposition \ref{prop.decomposable} yields that the $H$-action on $X_\mathcal{C}$ is decomposable. The last assertion then follows from Proposition \ref{prop.decomposable.measure.class} and uniqueness of the $\mu$-stationary probability measure (the Furstenberg measure) on $H/P$.
\end{proof}

We single out the following consequence which gives a generalization (and an explanation) of the phenomenon of embedding of the Furstenberg boundary in the fibre bundle $X$. This phenomenon is discovered in the work of Sargent--Shapira \cite{sargent-shapira} when $X$ is the space of $2$-lattices inside $\R^3$. 
\begin{example}[Rank-$k$ lattices in $n$-space]
Let $G=\PGL_n(\R)$ and $Q$ the stabilizer of a $k$-space $W$ in $\R^{n}$. 
Let $R$ be the stabilizer in $Q$ of the homothety class of a lattice in $W$, and $R_0$ the connected component of $R$. In this case, the bundle $G/R$ over $G/Q$ will be denoted as $X_{n,k}$. It is actually the space of homothety-equivalence classes of rank-$k$ lattices in $\R^n$. Recall that $H$ is a copy of $\SL_2(\R)$ or $\PGL_2(\R)$ acting irreducibly on $\P(\R^{n})$ and $\mathcal{C} \subset G/Q$ is the unique compact $H$-orbit in $G/Q$. That is $\calC=HQ\subset G/Q$. It is then easy to check that we are in the setting of Theorem \ref{thm.irreducible.H.decompsable} and therefore we get a trivialization $(X_{n,k})_\mathcal{C} \overset{\phi}{\simeq} H/Q_H \times S/\Lambda$, where $S=\PGL_k(\R)$, and $\Lambda=\PGL_k(\Z)$, such that the associated cocycle $H \times H/Q_H \to S$ is morphism-type, i.e.~ it does not depend on the $H/Q_H$ coordinate (in particular it is a morphism $\rho: H \to S$). In the statement below, the $\mu$-action on $S/\Lambda$ is defined via $\rho$. 
\end{example}


\begin{corollary}
Keep the above setting. In particular, let $H$ be an algebraic subgroup of $G$ isomorphic to $\SL_2(\R)$ or $\PGL_2(\R)$ and acting irreducibly on $\P(\R^n)$. Let $(X_{n,k})_\calC$ be the sub-bundle of $X_{n,k}$ over the base $\calC\subset G/Q$. Then, we have
\[P_\mu^{erg}((X_{n,k})_\calC)\simeq P_\mu^{erg}(S/\Lambda). \]

\end{corollary}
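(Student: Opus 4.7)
The plan is to verify that the setup of the corollary fits exactly into the framework of Theorem \ref{thm.irreducible.H.decompsable} and then simply invoke that theorem. First, I would unpack the algebraic data attached to the space $X_{n,k}$: fix a $k$-plane $W \subset \R^n$ and identify $Q=\Stab_G(W)$ with the block-upper-triangular subgroup $\begin{pmatrix}\GL_k & * \\ 0 & \GL_{n-k}\end{pmatrix}$ modulo scalars. Taking $R_0$ to be the preimage in $Q$ of $\{\id\}\times \PGL_{n-k}(\R)$ intersected with the action trivial on homothety classes in $W$, one checks that $R_0 \lhd Q$ is a normal algebraic subgroup and that the quotient $Q/R_0$ is isomorphic to $S=\PGL_k(\R)$. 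Similarly $R/R_0 \simeq \PGL_k(\Z) = \Lambda$, which is a lattice in $S$.

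Next, I would invoke the hypothesis that $H \hookrightarrow G = \PGL_n(\R)$ acts irreducibly on $\P(\R^n)$. This is exactly the assumption of Theorem \ref{thm.irreducible.H.decompsable}, so that theorem applies and provides two conclusions: (i) there is a unique $H$-compact orbit $\mathcal{C}$ in $G/Q$, so our notation $\mathcal{C}=HQ$ is unambiguous; (ii) the $H$-action on the sub-bundle $(X_{n,k})_\mathcal{C}$ is decomposable, with associated morphism $\rho:H \to S = \PGL_k(\R)$ extending the natural map $Q_H \hookrightarrow Q \twoheadrightarrow S$, and one has the bijection
\[
P_\mu^{\erg}\bigl((X_{n,k})_\mathcal{C}\bigr) \simeq P_\mu^{\erg}(S/\Lambda)
\]
where the action of $\mu$ on $S/\Lambda = \PGL_k(\R)/\PGL_k(\Z)$ is through $\rho$.

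The only point that requires any genuine verification, and which I see as the main (mild) obstacle, is confirming that the intersection $Q_H^\circ \cap R_0$ behaves as required so that we are truly in the setting of Theorem \ref{thm.irreducible.H.decompsable} (in particular Case 2.3.a or equivalently the case where the decomposability criterion of Proposition \ref{prop.decomposable} applies). This can be checked via the explicit description of weight spaces given in the proof of Theorem \ref{thm.irreducible.H.decompsable}: since $H$ acts irreducibly on $\R^n$, the $Q$-invariant flag decomposes into $D$-weight spaces with $\mathfrak{u}$ acting as a raising operator, which forces $Q_H\cap R_0$ to lie in the kernel of the restriction to the top $k$-dimensional weight block; a direct computation shows this intersection is trivial (or in the trivial-fiber case $R_0 = Q$, Proposition \ref{prop.trivial.fibre.measure.class} already gives the stated bijection since the right-hand side is reduced to a point). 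Once this is in hand, the corollary follows directly from Theorem \ref{thm.irreducible.H.decompsable}.
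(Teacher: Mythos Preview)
Your proposal is correct and matches the paper's approach: the corollary is a direct specialization of Theorem \ref{thm.irreducible.H.decompsable} once one checks that the space $X_{n,k}$ of rank-$k$ lattices fits the general framework (with $S=\PGL_k(\R)$, $\Lambda=\PGL_k(\Z)$), exactly as done in the Example preceding the corollary. One small remark: your verification that $Q_H^\circ \cap R_0$ is trivial is not strictly needed as a separate step, since Theorem \ref{thm.irreducible.H.decompsable} takes only the irreducibility of $H$ on $\P(\R^n)$ as hypothesis and handles this internally.
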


\smallskip

\subsubsection{Case 2.3.b: Reducible $H$}

Below, we give an example for Case 2.3.b and justify that for this example it is not possible to extend the morphism $Q_H \to S$.

\begin{example}\label{ex.to.be.treated}
Let $G=\PGL_4(\R)$ and $Q$ be the parabolic subgroup given by the stabilizer of the 3-plane generated by the standard basis vectors $\{e_1,e_2,e_3\}$. We take $R_0$ to be the solvable radical of $Q$ and $R$ to be the stabilizer of the $3$-lattice generated by $\{e_1,e_2,e_3\}$. Finally, we take $H$ to be the copy of $\PGL_2(\R)$ in $G$ given by
\begin{equation}\label{eq.2.3.b.embedding}
\left \{ \begin{pmatrix}
a^2 & ab & 0 & b^2 \\
2ac & ad+bc & 0 & 2bd \\
0 & 0 & 1 & 0 \\
c^2 & cd & 0 & d^2 \\
\end{pmatrix} | \begin{pmatrix}
a & b \\
c & d 
\end{pmatrix} \in \SL_2^{\pm}(\R)
\right \}.
\end{equation}
We claim that this configuration falls into Case 2.3.b. Indeed, the intersection $Q_H=Q \cap H$ is given by the image of the upper-triangular subgroup of $\PGL_2(\R)$ in the embedding \eqref{eq.2.3.b.embedding} described above; in other words
$$
Q_H=
\left \{ \begin{pmatrix}
a^2 & ab & 0 & b^2 \\
0 & \pm 1 & 0 & \pm 2ba^{-1} \\
0 & 0 & 1 & 0 \\
0 & 0 & 0 & a^{-2} \\
\end{pmatrix} | \; a \neq 0
\right \}.
$$
So $Q_H$ is a parabolic subgroup of $H$ and therefore we are in Case 2. It is easy to see that intersection of $Q_H \cap R_0$ is trivial, hence we are in Case 2.3. Finally, clearly the $H$-representation described in \eqref{eq.2.3.b.embedding} is not irreducible justifying the claim.

Now note that $S=Q/R_0$ is the group $\PGL_3(\R)$ and the projection $Q \to S$ is given by the projectivization of the top-left 3-by-3 block in $Q$. It follows that the morphism $Q_H \to S$ is given by 
\begin{equation}\label{eq.not.extend}
\PGL_2(\R) \ni \begin{pmatrix}
a & b \\
0 & \pm a^{-1} 
\end{pmatrix} \mapsto \begin{pmatrix}
a & b & 0 \\
0 & \pm a^{-1} & 0 \\ 
0 & 0 & a^{-1}
\end{pmatrix} \in \PGL_3(\R) 
\end{equation}
However, it is not hard to see that the morphism \eqref{eq.not.extend} from the upper-triangular subgroup of $\PGL_2(\R)$ to $\PGL_3(\R)$ is not the restriction of a morphism $\PGL_2(\R) \to \PGL_3(\R)$. One can either use the classification of $\SL_2(\R)$-representations to see this or otherwise verify this claim by direct computation: note by $x$ and $e$ a pair of Lie algebra elements of $\mathfrak{sl}_2(\R)$ in Lie algebra of the upper-triangular group satisfying $[x,e]=2e$. Let $\overline{x}$ and $\overline{e}$ be their images in $\mathfrak{pgl}_3(\R)$ under the Lie algebra representation induced by  \eqref{eq.not.extend}. Now one checks by direct computation that it is not possible to find an element $\overline{f}$ in $\mathfrak{pgl}_3(\R)$ satisfying $[\overline{x},\overline{f}]=-2\overline{f}$ and $[\overline{e},\overline{f}]=\overline{x}$.


\end{example}

\section{$\SL_2(\R)$-Zariski closure: equidistribution}\label{sec.equidist}

In this part, we study equidistribution of the averaged measure $\frac{1}{n}\sum_{1\leq k\leq n}\mu^{*k}*\delta_x$ for $x$ inside the bundle $X_\calC$.
In fact, as we start by briefly explaining in \S \ref{subsec.equidist.trivial} below, all of them except the diagonal fibre action (Case 2.2) boils down to the corresponding results of Benoist--Quint \cite{BQ1,BQ3}. The part \S \ref{subsec.equidist.diagonal} is devoted to the diagonal fibre action case.

\subsection{Equidistribution from Benoist--Quint}\label{subsec.equidist.trivial}

In each case below, we keep the corresponding assumptions from \S \ref{sub.base.and.cases}.

\subsubsection{Case 1 (Dirac Base)} Recall that Case 1 corresponds to the situation when the acting group $H$ is contained in the parabolic $Q$ of $G$. As explained in \S \ref{subsub.dirac.base}, it follows that $H$ fixes a point in $G/Q$ and hence stabilizes the fibre above the fixed point. Therefore, up to conjugating $Q$, we are left with studying the associated $\mu$-random walk on the fibre $S/\Lambda$, where the probability measure $\mu$ is seen as a Zariski-dense measure in a copy of $\SL_2(\R)$ in the subgroup of $S$. This is then a particular situation of the setting treated in Benoist--Quint's work \cite{BQ1,BQ3}. Consequently, the corresponding equidistribution results apply. We do not state the result here as it would be a repetition. We refer the reader to the more recent \cite[Theorem 1.5]{prohaska-sert-shi}, where the compact support assumption of \cite{BQ3} is relaxed to finite exponential moment.

\subsubsection{Case 2.1 (Trivial fiber action)}\label{subsub.equidist.trivial.fibre}

Recall from Proposition \ref{prop.trivial.fibre.measure.class} that in this case the $H$-action on $X_\mathcal{C}$ is decomposable with trivial morphism, i.e.~ there exists a standard trivialization $X \simeq G/Q \times Q/R$ for which the associated cocycle restricted to $\mathcal{C}$ is the trivial morphism. Therefore in this case we have $\mu^{\ast k} \ast \delta_{(\theta,f)}=\int \delta_{g \theta} d\mu^{\ast k}(g) \otimes \delta_f$, in other words, the equidistribution problem is only the one in $\mathcal{C} \subseteq G/Q$. It is well-known that by spectral gap property we have the convergence $\int \delta_{g \theta} d\mu^{\ast k}(g) \to \overline{\nu}_F$ moreover with exponential speed estimates with respect to a class of H\"{o}lder functions. We omit the statement to avoid repetition; see \cite[Ch.~ V, Theorem 4.3]{bougerol.lacroix}.

\subsubsection{Case 2.3.a (Irreducible $H$-action)}

\begin{proposition}\label{prop.equidist.H.irred}
Keep the setting of Theorem \ref{thm.irreducible.H.decompsable} and suppose moreover that the measure $\mu$ on $H$ has finite exponential moment. Then, there exists a standard trivialization $X \simeq G/Q \times S/\Lambda$ such that for every $x \in X_\mathcal{C}$, the limit as $n$ tends to infinity of $\frac{1}{n}\sum_{k=1}^n \mu^{\ast k}\ast \delta_x$ exists and equals to a product $\overline{\nu}_F \otimes \nu^F$, where $\overline{\nu}_F$ is the Furstenberg measure on $H/Q_H$ and $\nu^F$ is a homogeneous probability measure on $S/\Lambda$.
\end{proposition}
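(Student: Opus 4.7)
The plan is to reduce the equidistribution problem on $X_\calC$ to the Benoist--Quint equidistribution theorem on $S/\Lambda$, exploiting the decomposability already established in Theorem \ref{thm.irreducible.H.decompsable}. Concretely, I fix a standard trivialization $X_\calC\simeq \calC\times S/\Lambda$ in which the $H$-action reads $h\cdot(c,f)=(hc,\rho(h)f)$ for an algebraic morphism $\rho:H\to S$. In these coordinates,
\[
\mu^{\ast k}\ast \delta_x=\int \delta_{(g\theta,\rho(g)f)}\,d\mu^{\ast k}(g)\qquad\text{for }x=(\theta,f)\in X_\calC,
\]
so the pushforwards along the two $H$-equivariant projections $X_\calC\to\calC$ and $X_\calC\to S/\Lambda$ are, respectively, the $\mu$-walk on $\calC$ starting at $\theta$ and the $\rho_\ast\mu$-walk on $S/\Lambda$ starting at $f$. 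The measure $\rho_\ast\mu$ has finite exponential moment and its support generates a Zariski-dense subgroup of the semisimple group $\rho(H)<S$.

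Next I would establish tightness of $\{\frac{1}{n}\sum_{k=1}^n\mu^{\ast k}\ast\delta_x\}_n$ on $X_\calC$. Tightness of the projection on $\calC$ is automatic by compactness, while tightness of the projection on $S/\Lambda$ follows from non-escape of mass for the $\rho_\ast\mu$-walk on $S/\Lambda$; since $\Lambda$ is a lattice in $S$, $\rho_\ast\mu$ has finite exponential moment, and $\rho(H)$ is semisimple without compact factors, this is a consequence of the Eskin--Margulis/Benoist--Quint non-divergence estimates. Consequently every weak-$\ast$ subsequential limit $\nu$ is a $\mu$-stationary probability measure on $X_\calC$.

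To identify $\nu$, decompose it into ergodic $\mu$-stationary components $\nu=\int \nu_\omega\,d\tau(\omega)$. Since the $H$-action on $\calC$ is $\mu$-proximal with unique stationary measure $\overline{\nu}_F$, Proposition \ref{prop.decomposable.measure.class} forces each $\nu_\omega=\overline{\nu}_F\otimes \nu^F_\omega$ for some ergodic $\rho_\ast\mu$-stationary measure $\nu^F_\omega$ on $S/\Lambda$; integrating gives $\nu=\overline{\nu}_F\otimes \widetilde{\nu}^F$ with $\widetilde{\nu}^F:=\int \nu^F_\omega\,d\tau(\omega)$. On the other hand, the pushforward of $\nu$ to $S/\Lambda$ coincides with the subsequential limit of $\frac{1}{n}\sum_{k=1}^n(\rho_\ast\mu)^{\ast k}\ast\delta_f$, which by the Benoist--Quint equidistribution theorem \cite{BQ3} (extended under finite exponential moment in \cite{prohaska-sert-shi,benard-desaxce}) exists, is independent of the subsequence, and equals a homogeneous probability measure $\nu^F$ on $S/\Lambda$. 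Therefore $\widetilde{\nu}^F=\nu^F$, the limit $\nu=\overline{\nu}_F\otimes \nu^F$ is unique along all subsequences, and the full Ces\`aro average converges as claimed.

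The only non-routine step is the tightness/non-escape of mass for the fibre walk; this is where the hypothesis that $\Lambda$ is a lattice together with the finite exponential moment assumption genuinely intervenes, via the quantitative non-divergence statements on $S/\Lambda$. Once tightness is secured the remainder is a formal combination of decomposability, the product-structure classification of Proposition \ref{prop.decomposable.measure.class}, and the known equidistribution results on quotients by lattices.
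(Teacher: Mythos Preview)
Your proposal is correct and follows essentially the same route as the paper: use the decomposability from Theorem~\ref{thm.irreducible.H.decompsable} to obtain $H$-equivariant projections to $\calC$ and $S/\Lambda$, apply uniqueness of the Furstenberg measure on the first factor and the Benoist--Quint/\cite{prohaska-sert-shi} equidistribution on the second, and then invoke Proposition~\ref{prop.decomposable.measure.class} to recover the unique coupling $\overline{\nu}_F\otimes\nu^F$. The paper's proof is slightly more streamlined in that it does not isolate tightness as a separate step nor pass through an ergodic decomposition: the convergence of the fibre walk to a probability measure in \cite[Theorem~1.5]{prohaska-sert-shi} already carries the non-escape of mass, and the injectivity argument in Proposition~\ref{prop.decomposable.measure.class} applies directly to any stationary $\nu$ (ergodic or not) because its base projection is automatically the ergodic Furstenberg measure.
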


As we shall see, the statement follows as a consequence of the decomposability of $H$-action (Theorem \ref{thm.irreducible.H.decompsable}), Benoist--Quint \cite{BQ1,BQ3} equidistribution results. We note however that we do not treat the question of equidistribution of trajectories of points $x \in X \setminus X_\mathcal{C}$. For such points, already in the level of the base space $G/Q$, the corresponding equidistribution question does not seem to be well-understood in all cases (cf.~ \cite{BQ.compositio}).  

\begin{remark}
    The conclusion of Proposition \ref{prop.equidist.H.irred} also holds if we replace the Ces\`{a}ro average $\frac{1}{n}\sum_{k=1}^n \mu^{k}\ast \delta_x $ by the sequence of empirical measures. More precisely, for every $x \in X_\mathcal{C}$, for $\mu^{\N}$-a.e.~ $a \in H^{\N}$, the sequence $\frac{1}{n}\sum_{k=0}^{n-1} \delta_{a_k\ldots a_0 x}$ converges to a product measure of the same form as in  Theorem \ref{thm.irreducible.H.decompsable}. This follows in the same way, using in addition Breiman's law of large numbers (see e.g.~ \cite[Corollary 3.3]{BQ3}) and the corresponding empirical measure equidistribution results of Benoist--Quint.
\end{remark}

\begin{proof}
It is clear that any limit point $\nu$ of $\frac{1}{n}\sum_{k=1}^n \mu^{\ast k}\ast \delta_x$ is a $\mu$-stationary probability measure. By Theorem \ref{thm.irreducible.H.decompsable}, there exists a standard trivialization yielding  $H$-equivariant projections on $\pi_1:X \to G/Q$ and $\pi_2:X \to S/\Lambda$, where equivariance in the latter is with respect to a morphism $H \to S$. As a result, a limit point $\nu$ projects via $\pi_1$ and $\pi_2$ to the limit points of $\frac{1}{n}\sum_{k=1}^n \mu^{\ast k}\ast \delta_{\pi_1x}$ and $\frac{1}{n}\sum_{k=1}^n \mu^{\ast k}\ast \delta_{\pi_2x}$, respectively. However, by the uniqueness of Furstenberg measure, the first sequence admits the Furstenberg measure $\overline{\nu}_F$ as a limit. Moreover, by \cite[Theorem 1.5]{prohaska-sert-shi}, the second sequence also admits a limit $\nu^{F}$ which is a homogeneous probability measure on $S/\Lambda$. Since the factor $H/Q_H$ is $\mu$-proximal, it follows by the bijection in Proposition \ref{prop.decomposable.measure.class} that $\nu$ is the unique coupling of $\overline{\nu}_F$ and $\nu^{F}$, i.e.~ the product $\overline{\nu}_F \otimes \nu^{F}$.\end{proof}


\subsection{Equidistribution for diagonal fiber actions (Case 2.2)}\label{subsec.equidist.diagonal}

As mentioned above, unlike the previous cases, the equidistribution problem for the diagonal fiber actions case does not boil down to the corresponding work of Benoist--Quint and we now proceed with our result in this case. 

Recall from Case 2.2 and Lemma \ref{lemma.its.iwasawa} that we have a standard trivialization $X_\calC\simeq \calC\times_\alpha S/\Lambda$ such that the action of $H$ on the fibre $S/\Lambda$ is by a one-dimensional split subgroup $D^\pm$ of $S$ through the Iwasawa cocycle $\alpha$ up to a sign.

The main statement for $\PGL_2(\R)$-case is given in the introduction. Here is the statement for $\SL_2(\R)$ case.
\begin{theorem}\label{thm.equidist.geod.sl}
Keep the hypotheses and notation of Theorem \ref{thm.measure.class.geod} and let $X_\mathcal{C} \simeq H/Q_H \times S/\Lambda$ be the trivialization given by Theorem \ref{thm.measure.class.geod}. Suppose in addition that $H\simeq \SL_2(\R)$ and the measure $\mu$ has finite exponential moment. Suppose $\Gamma_\mu$ preserves a proper closed cone in $\R^2$. Then, the $D$-orbit of $z \in S/\Lambda$ equidistribute to a probability measure $m$ on $S/\Lambda$ if and only if for any $x=(\theta,z)\in X_\calC$ with $\theta$ inside the support of the Furstenberg measure, we have the convergence
\[ \frac{1}{n}\sum_{k=1}^n\mu^{*k}*\delta_x\rightarrow \bar{\nu}_F\otimes m  \quad  \text{as} \; \; n \to \infty. \]
If $\Gamma_\mu$ does not preserve a proper closed cone in $\R^2$, then the $D^\pm$-orbit of $z \in S/\Lambda$ equidistribute to a probability measure $m$ on $S/\Lambda$ if and only if for any $x=(\theta,z)\in X_\calC$, we have the convergence
\[ \frac{1}{n}\sum_{k=1}^n\mu^{*k}*\delta_x\rightarrow \bar{\nu}_F\otimes m  \quad  \text{as} \; \; n \to \infty. \]
\end{theorem}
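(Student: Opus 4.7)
The plan is to adapt the proof of Theorem \ref{thm.equidist.geod} to the $\SL_2(\R)$ setting, where the sign cocycle $\sg$ isolated in Lemma \ref{lem:sign} must be carefully accounted for. Using the standard trivialization given by Theorem \ref{thm.measure.class.geod} and Lemma \ref{lem:sign}, for $x = (\theta, z) \in X_\calC$ we rewrite
\begin{equation*}
\frac{1}{n}\sum_{k=1}^n \mu^{*k}*\delta_x = \frac{1}{n}\sum_{k=1}^n \int \delta_{(g\theta,\, (\sigma(g,\theta),\sg(g,\theta))\cdot z)} \, d\mu^{*k}(g),
\end{equation*}
where by Lemma \ref{lemma.iwasawa.norm} the real parameter $\log\sigma(g,\theta)$ is governed by the norm cocycle $\log(\|gv_\theta\|/\|v_\theta\|)$. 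This rewriting exhibits the Cesàro average as a renewal-type sum on the base $\calC$, twisted by the $\{\pm 1\}$-valued sign cocycle.

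The core of the argument, carried out in the proof of Theorem \ref{thm.equidist.geod} and sketched in \S\ref{subsec.intro.proofs}, is to apply the uniform quantitative renewal theorem of Li \cite{jialun.ens} and Li--Sahlsten \cite{jialun.advances}, together with exponential large deviation estimates (both available under the finite exponential moment assumption), to establish that as $n,T\to\infty$ the Cesàro random-walk average and the time-$T$ flow average
\begin{equation*}
\bar\nu_F \otimes \tfrac{1}{T}\int_0^T \delta_{\tilde a(s)\cdot z}\, ds
\end{equation*}
have the same set of limit points, for a suitable one-parameter flow $\tilde a(s)=(a(s),\eta(s))$ in $D^\pm$ whose $\{\pm 1\}$-component $\eta(s)$ is dictated by the typical sign of the matrix products contributing to renewal times near $s$.

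The two cases of the statement are then distinguished by the behaviour of $\eta$. In the case where $\Gamma_\mu$ preserves a proper closed cone in $\R^2$ (Case 2.2.a), the second conclusion of Lemma \ref{lem:sign} gives $\sg(g,\theta)=\id$ whenever $g\in\Gamma_\mu$ and $\theta$ lies in the support of the Furstenberg measure; therefore $\eta\equiv \id$ and $\tilde a$ reduces to the $D$-flow, yielding the asserted equivalence. In the opposite case (Case 2.2.b), the unique $\mu$-stationary probability measure on the two-sheeted cover $K\to H/Q_H$ places mass $\tfrac{1}{2}$ on each preimage, and the semi-conjugacy/ergodicity argument used in the proof of Claim 0 of Theorem \ref{thm.measure.class.geod} shows that the sign part of the random walk equidistributes to the uniform measure on $M\simeq\{\pm 1\}$. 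Consequently $\tilde a$ becomes the full $D^\pm$-flow and the equivalence for any $x \in X_\calC$ follows.

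The main obstacles lie, first, in obtaining the renewal comparison uniformly in the base point $\theta$ and the fibre coordinate $z$, which is precisely what the quantitative uniformity of \cite{jialun.ens,jialun.advances} is tailored for; and second, in rigorously identifying the asymptotic behaviour of the sign part in Case B, which passes through the ergodicity of the skew-product $(H^\Z\times K,\tilde T^\sg,\tilde\beta^\sg)$ already exploited in the measure classification, and an argument ensuring that the sign equidistribution genuinely interlaces with the renewal limit rather than merely surviving in an ergodic average.
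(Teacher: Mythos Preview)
Your overall strategy---reducing to a renewal comparison via Lemma \ref{lem:sign} and then splitting into Cases 2.2.a and 2.2.b---matches the paper, and your treatment of Case 2.2.a is essentially the paper's argument. The gap is in Case 2.2.b, and you have in fact identified it yourself in your final paragraph without resolving it.

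Concretely, you propose to track the sign as a separate $\{\pm 1\}$-valued process $\eta(s)$ attached to the renewal time $s$, and then invoke the ergodicity of $(H^\Z\times K,\tilde T^{\sg},\tilde\beta^{\sg})$ from Claim~0 to say that $\eta$ equidistributes to the uniform measure on $M$. But that ergodicity statement is purely qualitative; it gives no quantitative control of the sign at the renewal scale, and there is no obvious mechanism by which a Birkhoff-type equidistribution of the sign would ``interlace'' with the renewal limit in the way you need. The object $\tilde a(s)=(a(s),\eta(s))$ is never actually defined, because for a fixed $s$ there is no canonical sign: different words $g$ with $\sigma_\chi(g,\theta)\approx s$ may carry different signs, and the renewal theorem as stated says nothing about their relative weights.

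The paper sidesteps this entirely by lifting the whole problem to the two-sheeted cover $K\simeq\mathbb{S}^1$ from the outset. The renewal theorem (Theorem \ref{thm.renewal}) and hence Proposition \ref{prop.equidistribution} are proved directly on $K\times\R$, with the limiting measure on the $K$-factor given by the Guivarc'h--Le~Page measures $\nu_w$ of Definition \ref{defi:nux}. The sign information is then carried by the $K$-coordinate rather than by a separate process: one defines $\varphi(k',r)=\psi(k'M,\calG(r,\sg(k'))z)$ and uses the $H$-equivariant map $p:K\times_\sigma D\to H/Q_H\times_\alpha D^\pm$ of Lemma \ref{lem.hequivariant} to identify the Ces\`aro average on $X_\calC$ with the left-hand side of \eqref{eq.renewal+LDP}. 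In Case 2.2.b the unique stationary measure $\nu_K=\tfrac12\int(\delta_w+\delta_{-w})\,d\bar\nu_F(\R w)$ then produces the $D^\pm$-average automatically when one integrates $\varphi(k',r)$ in $k'$. No separate sign-equidistribution argument is needed; the renewal theorem on $K$ already contains it.
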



\subsubsection{Equidistribution result on $K\times \R$}

For $H\simeq \PGL_2(\R)$, if $\mu$ is supported on $\PSL_2(\R)$, then there is no sign issue thanks to the choice of the section $s$ (as taking values in $K^o$). We only need to prove equidistribution result on $K^o\times \R$. The proof is the same as the $\SL_2(\R)$-case. We will comment at the end on the changes needed to handle the $\PSL_2(\R)$-case (i.e.~ Theorem \ref{thm.equidist.geod}).

In order to treat the sign part in the cocycle $\alpha$, we start with equidistribution result on $K\times \R$ instead of $H/Q_H\times \R$. 
Recall that for $H \simeq \SL_2(\R)$ and $\Gamma_\mu$ preserves a closed proper cone (Case 2.2.a), we have two $\mu$-stationary and ergodic measures $\nu_1,\nu_2$ on $\mathbb{S}^1$, both are the lifts of the Furstenberg measure on the projective space $\P(V)$.
In this case, there exist two continuous non-negative functions $p_1$ and $p_2$ (for the characterization of $p_1$ and $p_2$, see \cite[Theorem 2.16]{GLP}) on $\mathbb{S}^1$ such that $p_1+p_2=1$, $p_i|{\Supp \nu_j}=\delta_{i,j}$, where $\delta_{i,j}$ is the Kronecker symbol, and for $j=1,2$, and $x\in \mathbb{S}^1$, we have
\[ p_j(x)=\int p_j(gx)\,d\mu(g). \]
Otherwise (Case 2.2.b), there exists a unique $\mu$-stationary measure $\nu_K$ on $ \mathbb{S}^1$.

Let us define the following measures $\nu_x$:
\begin{definition}\label{defi:nux}
	For $x\in \mathbb{S}^1$, we define
	\[\nu_x :=p_1(x)\nu_1+p_2(x)\nu_2 \, \text{ in Case 2.2.a,  otherwise } \nu_x=\nu_K.\]
\end{definition}
According to \cite[Theorem 2.16]{GLP}, these measures $\nu_x$ are the limit distributions for the random walk on $\bbS$ starting from $x$, following the law of $\mu$.

For the probability measure $\mu$, let $\lambda_\mu$ be its Lyapunov exponent, defined as the almost sure limit of $\frac{1}{n}\log\|g_1\cdots g_n\|$ where $g_1,\cdots,g_n$ are i.i.d.~ random variables with  distribution $\mu$. Let $\sigma_\chi(g,x)=\bar\chi(\bar\sigma(g,\eta))$ for $g\in H$, $x\in \mathbb{S}^1$ and $\eta=\R x\in H/Q_H$, where $\bar\chi(\bar\sigma(g,\eta))$ is defined in Lemma \ref{lemma.iwasawa.norm}.  Clearly, $\sigma_\chi$ does not depend on the lift $x$ of $\eta$ to $\mathbb{S}^1$, so we sometimes use equivalently $\eta$ in the second coordinate to ease the notation. 

\begin{proposition}\label{prop.equidistribution}
Under the same assumptions as in Theorem \ref{thm.equidist.geod.sl}, there exist $\gamma >0$ and $\eta>0$ such that the following holds. For $n\in\N$, $t=\lambda_\mu n$, $\lambda_\mu/2>\eps_1>2/n$, for any $\varphi\in C^3(\bbS\times \R)$ and for $\z \in \bbS$
\begin{equation} \label{eq.renewal+LDP}
\begin{aligned}
\frac{1}{n}\sum_{k=1}^n\int \varphi(g\z ,\sigma_\chi(g,\z ))\ d\mu^{*k}(g)&=\frac{1}{t}\int_{\bbS}\int_0^t \varphi(y,s)\ d s\ d \nu_\z (y)\\
&+O(e^{-\eta\eps_1 n}|\varphi|_{C^3}+|\varphi|_\infty\eps_1+\frac{C|\varphi|_\infty}{n(1-e^{-c})}),
\end{aligned}
\end{equation}
where the constants $C,c>0$ come from the large deviation estimates with rate $\varepsilon_1$ (see Theorem \ref{thm:LDP}).
\end{proposition}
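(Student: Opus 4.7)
The plan is to recognize the Cesàro average as a discrete approximation to a renewal-type sum on $\bbS\times\R$, and reduce to the uniform quantitative renewal theorem of Li \cite{jialun.ens} and Li--Sahlsten \cite{jialun.advances} combined with exponential large deviation estimates for the Iwasawa cocycle, which hold under the finite exponential moment hypothesis (Theorem \ref{thm:LDP}).

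First I would introduce a smooth cutoff $\chi_t\colon\R\to[0,1]$ equal to $1$ on $[\eps_1 n,\,t-\eps_1 n]$, vanishing outside $[0,t]$, with $\|\chi_t^{(j)}\|_\infty=O(\eps_1^{-j})$ for $j\leq 3$; this is where the hypothesis $\eps_1>2/n$ enters, to keep $\|\chi_t\|_{C^3}$ from overwhelming the renewal-theorem error. Setting $\widetilde\varphi(y,s):=\varphi(y,s)\chi_t(s)$, the argument splits into three reductions: (a) replace $\varphi$ by $\widetilde\varphi$ and extend the finite sum $k\leq n$ to $k\in\N$; (b) apply the uniform quantitative renewal theorem to $\widetilde\varphi$; (c) replace $\int_\R \widetilde\varphi\,ds$ by $\int_0^t\varphi\,ds$.

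For step (a), the key input is the exponential large deviation estimate
\[
\mu^{*k}\{g\in H:|\sigma_\chi(g,z)-k\lambda_\mu|>\eps_1 k\}\leq Ce^{-ck}
\]
uniform in $z\in\bbS$, with $c=c(\eps_1)>0$. For $k\leq n$, the event $\sigma_\chi(g,z)\in\{\chi_t=1\}$ has probability $\geq 1-Ce^{-ck}$ provided $k\lambda_\mu\in[2\eps_1 n,\,t-2\eps_1 n]$; for the remaining $O(\eps_1 n/\lambda_\mu)$ boundary values of $k$ the total contribution is $O(|\varphi|_\infty\eps_1)$ after dividing by $n$. For $k>n$, the event $\sigma_\chi(g,z)\leq t$ has $\mu^{*k}$-probability at most $Ce^{-c(k-n)}$. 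Summing over $k$ and dividing by $n$ yields the replacement error $\frac{C|\varphi|_\infty}{n(1-e^{-c})}+O(|\varphi|_\infty\eps_1)$. For step (b), the uniform quantitative renewal theorem, applied to $\widetilde\varphi$ (whose support lies in $[0,t]$ at distance $\eps_1 n$ from $0$), gives
\[
\sum_{k=1}^\infty\int\widetilde\varphi(gz,\sigma_\chi(g,z))\,d\mu^{*k}(g)=\tfrac{1}{\lambda_\mu}\int_\bbS\!\int_\R\widetilde\varphi(y,s)\,ds\,d\nu_z(y)+O(e^{-\eta\eps_1 n}|\varphi|_{C^3})
\]
uniformly in $z$, where $\nu_z$ is the measure from Definition \ref{defi:nux}; the rate $e^{-\eta\eps_1 n}$ reflects the shift $\eps_1 n$ from the boundary of the support. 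For step (c), the comparison $|\int_\R\widetilde\varphi\,ds-\int_0^t\varphi\,ds|\leq 2\eps_1 n\,|\varphi|_\infty$ gives, after dividing by $\lambda_\mu n=t$, an error $O(|\varphi|_\infty\eps_1)$. Combining everything yields the claimed formula.

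The main technical obstacle is matching the uniform quantitative renewal theorem from \cite{jialun.ens, jialun.advances} to the present setting: uniformity in the base point $z\in\bbS$, with the correct limit measure $\nu_z$ (which is $\nu_K$ in Case 2.2.b, and the convex combination $p_1\nu_1+p_2\nu_2$ in Case 2.2.a), and with an exponential error in the shift depending only on $|\varphi|_{C^3}$, so that the compatibility $\eps_1>2/n$ exactly balances the decay of $\chi_t$ against the renewal error. Once this renewal statement is available in the required form, the remainder is a bookkeeping exercise combining the large deviation principle with the cutoff comparisons outlined above.
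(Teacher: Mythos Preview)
Your strategy is the same as the paper's: split off the main term with a smooth cutoff in the $\R$-variable, apply the uniform quantitative renewal theorem (Theorem~\ref{thm.renewal}) to the cut-off function, and control the remainder using the large deviation estimate (Theorem~\ref{thm:LDP}). The bookkeeping in steps (a) and (c) is correct.

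There is, however, a genuine slip in the choice of cutoff that prevents you from extracting the decay $e^{-\eta\eps_1 n}$ in step (b). You take $\chi_t$ supported on $[0,t]$. To apply Theorem~\ref{thm.renewal} to $\sum_k\int\widetilde\varphi(gz,\sigma_\chi(g,z))\,d\mu^{*k}$ you must write $\widetilde\varphi(y,\sigma_\chi)=f(y,\sigma_\chi-t')$ for some shift $t'$, i.e.\ $f(y,s)=\widetilde\varphi(y,s+t')$; the error term is then $O\bigl(e^{-\eta(t'-|\Supp f|)}|f|_{W^{1,2}C^\gamma}\bigr)$. With $\Supp_s\widetilde\varphi\subset[0,t]$ one has $\Supp_s f\subset[-t',t-t']$, so $|\Supp f|=\max(t',t-t')\geq t'$ for every choice of $t'$, and $t'-|\Supp f|\leq 0$: the error never decays. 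The fix is exactly what the paper does: take the cutoff supported in $[\eps_1 n-1,\,t-\eps_1 n+1]$ (with transitions of width $1$, so that $\|\chi''\|_\infty=O(1)$ and $\tfrac{1}{n}|f|_{W^{1,2}C^\gamma}\leq|\varphi|_{C^3}$ directly). Then with $t'=t$ one gets $|\Supp f|=t-\eps_1 n+1$ and hence the claimed $e^{-\eta\eps_1 n}$. Incidentally, the hypothesis $\eps_1>2/n$ is not used to control $\|\chi_t\|_{C^3}$ as you suggest, but rather in the large-deviation step for $k>n$ (to ensure $-\eps_1 n+1\leq -\eps_1 n/2$); with width-$1$ transitions the cutoff's derivatives are bounded independently of $\eps_1$ and $n$.
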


The proof of Proposition \ref{prop.equidistribution} mainly uses the renewal theorem to get the equidistribution and large deviation bounds to get some control of the error.
	
\begin{remark}[Error term]
In order to get a rate in the convergence, we need to know the dependence of the constants $C$ and $c>0$ on $\eps_1$. When $\mu$ has bounded support, both constants can be estimated with $C$ bounded and $c$ quadratic in $\varepsilon_1$, see \cite[Proposition 1.13]{aoun-sert.concentration} which provides subgaussian concentration estimates. In this case, we can get an explicit error term $O(n^{-1/3}|\varphi|_{C^3})$ in Proposition \ref{prop.equidistribution}. With exponential moment, $c$ can still be shown to be quadratic in $\varepsilon_1$ (locally). On the other hand, it might also be possible to use large deviations bounds in a more clever way to get a better error term.
\end{remark}
	
We now proceed to prove Proposition \ref{prop.equidistribution}.
For a function $\varphi$ on $\bbS\times \R$, we define its $L^1C^\gamma$ 
norm by
\[|\varphi|_{L^1C^\gamma}=\int_{\R}\|\varphi(\cdot ,s)\|_{C^\gamma(\bbS)}ds,\  \]
and its $W^{1,2}C^\gamma$ norm by
\[|\varphi|_{W^{1,2}C^\gamma}=|\partial_{ss}\varphi|_{L^1C^\gamma}+|\varphi|_{L^1C^\gamma}, \]
where $C^\gamma$ is the $\gamma$-H\"older norm.
The first ingredient of the proof of Proposition \ref{prop.equidistribution} is the following uniform quantitative renewal theorem which was first proven in \cite{jialun.ens}. We borrow the current version from \cite{jialun.advances}. 
\begin{theorem}\cite[Proposition 5.4]{jialun.advances}\label{thm.renewal}
Under the same assumptions as in Theorem \ref{thm.equidist.geod.sl}, we have the following.
For a compactly supported $C^3$ function $f$ on $\bbS\times\R$, define the renewal sum for $\z \in \bbS$ and $t\in\R^+$ by
\[Rf(\z ,t)=\sum_{k=1}^\infty \int f(g\z ,\sigma_\chi(g,\z )-t)d\mu^{*k}(g). \]
Then, there exists $\eta>0$ such that
\begin{equation}\label{equ_renewal}
Rf(\z ,t)=\frac{1}{\lambda_\mu}\int_{\bbS} \int_{-t}^\infty f(y,u)\ d Leb(u)\ d\nu_\z (y)+O(e^{-\eta (t-|\Supp f|)}|f|_{W^{1,2}C^\gamma}),
\end{equation} 
where 
$$|\Supp f|=\sup\{|s| : (\z ,s)\in\Supp f \text{ for some }\z \in \bbS\}.$$ 
\end{theorem}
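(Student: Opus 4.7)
The strategy is to treat the renewal sum by Fourier analysis in the variable $t$, reducing matters to a spectral analysis of a twisted transfer operator on $\bbS$. For $\xi \in \R$, define
\[
P_\xi \varphi(\z) \;=\; \int e^{i\xi \sigma_\chi(g,\z)}\, \varphi(g\z)\, d\mu(g)
\]
acting on H\"older functions $\varphi$ on $\bbS$. A substitution $u = \sigma_\chi(g,\z) - t$ in the renewal sum shows, modulo harmless boundary terms, that the Fourier transform of $Rf(\z,\cdot)$ in $t$ is formally $(I-P_\xi)^{-1}$ applied to the Fourier transform $\widetilde f(\cdot,\xi)$ of $f$ in its second variable. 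Thus everything reduces to analytic properties of the operator-valued function $\xi \mapsto (I - P_\xi)^{-1}$ on $C^\gamma(\bbS)$.

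Next I would extract the main term via spectral theory of $P_\xi$. By Le Page's theorem -- applicable because $\mu$ is Zariski-dense in $H\simeq \SL_2(\R)$ (so the cocycle $\sigma_\chi$ is proximal and strongly irreducible) and has finite exponential moment -- the operator $P_0$ has $1$ as a simple isolated eigenvalue, with eigenfunction $\mathbf{1}$ and dual eigenvector a $\mu$-stationary measure, while the rest of its spectrum lies strictly inside a disc of radius $<1$. Analytic perturbation theory extends this to a holomorphic decomposition $P_\xi = \lambda(\xi)\Pi_\xi + N_\xi$ on a complex neighbourhood of $0$, with $\lambda(0)=1$, $\lambda'(0) = i\lambda_\mu$ (consistent with the definition of $\lambda_\mu$ as the mean of the cocycle under $\nu_\z$), and $\|N_\xi^k\|$ decaying geometrically uniformly in $\xi$. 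The simple pole of $(I-P_\xi)^{-1}$ at $\xi = 0$ produces, after inverse Fourier transform and residue calculus, precisely the main term $\frac{1}{\lambda_\mu}\int_\bbS \int_{-t}^\infty f(y,u)\, du\, d\nu_\z(y)$.

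The most delicate part -- and the heart of a quantitative renewal theorem of this type -- is controlling the remainder by shifting the contour of inverse Fourier transform from $\Im \xi = 0$ to $\Im \xi = \eta > 0$, which produces the exponentially small factor $e^{-\eta t}$ and hence the main error estimate. Analytic continuation of $\xi \mapsto P_\xi$ into such a strip is afforded by the finite exponential moment of $\mu$. The principal obstacle is then uniform control of $\|(I - P_\xi)^{-1}\|_{C^\gamma \to C^\gamma}$ for $|\xi| \to \infty$ on the shifted line: this is a non-arithmeticity/Dolgopyat-type spectral estimate for the Iwasawa cocycle, which in the proximal Zariski-dense setting follows from refinements of the Guivarc'h--Le Page machinery and is exactly the technical work carried out in \cite{jialun.ens, jialun.advances}. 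The two $u$-derivatives appearing in the $W^{1,2}$ component of the norm convert, after Fourier transform, into $|\xi|^{-2}$ decay of $\widetilde f(\cdot,\xi)$, which absorbs any polynomial growth of $\|(I-P_\xi)^{-1}\|$ and makes the resolvent integral convergent; the exponential pre-factor in $|\Supp f|$ comes from the cost of bounding $\widetilde f$ on the shifted line. Putting these ingredients together yields the stated error, the hard part being the uniform non-arithmetic resolvent bound and the careful tracking of constants through the perturbation argument.
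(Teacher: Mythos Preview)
The paper does not prove this statement; it imports it verbatim as \cite[Proposition~5.4]{jialun.advances}, so there is no ``paper's own proof'' to compare against. Your sketch is a faithful high-level outline of the argument actually carried out in \cite{jialun.ens,jialun.advances}: Fourier transform in $t$ turns the renewal sum into $(I-P_\xi)^{-1}$ applied to $\widetilde f$, Le~Page--type spectral gap and analytic perturbation near $\xi=0$ isolate the pole with residue $\tfrac{1}{\lambda_\mu}\int f\,d\nu_\z\,dLeb$, a contour shift into $\Im\xi>0$ (available by the exponential moment) yields the factor $e^{-\eta t}$, and the large-$|\xi|$ resolvent bound of Dolgopyat type is the substantive input supplied by those references, with the two $u$-derivatives in $|f|_{W^{1,2}C^\gamma}$ providing the $|\xi|^{-2}$ decay needed for integrability. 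One point worth flagging: in Case~2.2.a the operator $P_0$ on $C^\gamma(\bbS)$ has a \emph{two}-dimensional leading eigenspace (eigenfunctions $p_1,p_2$, dual eigenvectors $\nu_1,\nu_2$), not a simple eigenvalue $1$, and this is precisely what produces the $\z$-dependent measure $\nu_\z=p_1(\z)\nu_1+p_2(\z)\nu_2$ in the main term rather than a single stationary measure; your sketch glosses over this but it does not affect the structure of the argument.
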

A crucial point in this theorem is that the error term is of the form $e^{-\eta(t-|\Supp{f}|)}$, which enables us to take $f$ with support of size $(1-\eps)t$.

The second ingredient of the proof of Proposition \ref{prop.equidistribution} is the following large deviation estimate; we borrow the precise statement from \cite[Thm. 13.11 (iii)]{bq.book}.

\begin{theorem}[Le Page]\label{thm:LDP}
For every $\eps_1>0$, there exist constants $C>0$ and $c>0$ such that
\[\mu^{*n}\{g\in G  :  |\sigma_\chi(g,\z )-\lambda_\mu n|\leq \eps_1 n \}\leq Ce^{-cn}. \]
\end{theorem}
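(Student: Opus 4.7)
This is the classical Le Page large deviation theorem for products of random matrices specialized to our rank-one setting. I would approach it via spectral methods for the Markov (transfer) operator associated to $\mu$, combined with a Chernoff / G\"artner--Ellis argument. The cleanest route is to work in an irreducible linear representation of $H$ and exploit Lemma~\ref{lemma.iwasawa.norm}, which identifies $\sigma_\chi$ with a log-norm cocycle.

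First, I would fix an irreducible representation $\rho\colon H\to\GL(V)$ with highest weight $\chi$, so that $\sigma_\chi(g,\eta)=\log\frac{\|\rho(g)v\|}{\|v\|}$ for any nonzero $v\in V_\eta$, and introduce the transfer operator $P$ on $C^\gamma(H/Q_H)$ (identified with a $\gamma$-H\"older space on the $H$-orbit in $\P(V)$) by $P\varphi(x)=\int\varphi(gx)\,d\mu(g)$. Classical results in the Zariski-dense rank-one setting (Le Page, Guivarc'h--Raugi; see \cite[Ch.~11]{bq.book}) provide a spectral gap for $P$: $1$ is a simple eigenvalue whose associated eigenprobability is $\overline{\nu}_F$, and the rest of the spectrum sits inside a disk of radius strictly less than $1$. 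The finite exponential moment hypothesis ensures that $P$ acts boundedly on $C^\gamma$, and a Doeblin--Fortet / Ionescu-Tulcea--Marinescu inequality combined with $\mu$-proximality (from Zariski density) yields the required quasi-compactness.

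Second, I would consider the twisted family
\[
P_s\varphi(x)=\int e^{s\,\sigma_\chi(g,x)}\varphi(gx)\,d\mu(g),\qquad s\in\R,
\]
which, thanks to the exponential moment of $\mu$ and the bound $|\sigma_\chi(g,x)|\leq C\log\|\rho(g)\|$, is a bounded operator on $C^\gamma(H/Q_H)$ for $s$ in a (real) neighborhood of $0$; moreover $s\mapsto P_s$ is analytic into $\mathcal{L}(C^\gamma)$. By Kato's analytic perturbation theory, $P_s$ retains a simple isolated dominant eigenvalue $\lambda(s)$ analytic in $s$ with $\lambda(0)=1$, together with an associated rank-one spectral projector $\Pi_s$. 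Iterating the cocycle identity for $\sigma_\chi$ yields $P_s^n\mathbf{1}(x)=\int e^{s\sigma_\chi(g,x)}\,d\mu^{*n}(g)$; setting $\Lambda(s)=\log\lambda(s)$, one then has $\Lambda'(0)=\lambda_\mu$ by Breiman's law of large numbers applied to $\sigma_\chi$. Strict convexity of $\Lambda$ at $0$ is the key technical input: it follows from positivity of the asymptotic variance of $\sigma_\chi$, which is non-zero as soon as $\sigma_\chi$ is not cohomologous to a constant --- a standard consequence of Zariski density.

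Finally, I would close with a Chernoff bound. For the upper tail and $s>0$ small,
\[
\mu^{*n}\bigl\{g\in G : \sigma_\chi(g,\z)\geq n(\lambda_\mu+\eps_1)\bigr\}
\leq e^{-sn(\lambda_\mu+\eps_1)}\,P_s^n\mathbf{1}(\z).
\]
Expanding $P_s^n=\lambda(s)^n\Pi_s+R_s^n$ with $R_s$ of spectral radius strictly below $\lambda(s)$, the right-hand side is bounded by $C'\exp\bigl(n[\Lambda(s)-s(\lambda_\mu+\eps_1)]\bigr)$ uniformly in $\z$. Using $\Lambda(0)=0$, $\Lambda'(0)=\lambda_\mu$, and strict convexity of $\Lambda$, the bracket is strictly negative for any fixed $\eps_1>0$ and sufficiently small $s>0$, giving the desired $Ce^{-cn}$ bound with $c=c(\eps_1)>0$. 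The lower tail is treated symmetrically with $s<0$, and summing the two estimates yields the statement. The main technical obstacle is the uniform spectral gap for the family $P_s$ and the control of the remainder $R_s^n$ --- in the rank-one Zariski-dense setting this is a matter of applying the Nagaev--Guivarc'h / Le Page machinery as developed in \cite{bq.book} rather than proving new analytic facts.
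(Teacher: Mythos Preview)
The paper does not prove this theorem; it is quoted as a known result, with the precise statement borrowed from \cite[Thm.~13.11~(iii)]{bq.book}. Your sketch via the Nagaev--Guivarc'h--Le Page transfer operator method (twisted operators $P_s$, analytic perturbation of the dominant eigenvalue, Chernoff bound) is exactly the machinery underlying the cited reference, so in that sense your approach matches the literature the paper relies on. One small remark: you correctly treat the statement as bounding the tail $\{|\sigma_\chi(g,\z)-\lambda_\mu n|\geq \eps_1 n\}$, whereas the displayed inequality in the paper has ``$\leq$'' --- evidently a typographical slip in the paper, as confirmed by how the estimate is actually used in the proof of Proposition~\ref{prop.equidistribution}.
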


We can now give
	
\begin{proof}[Proof of Proposition \ref{prop.equidistribution}]
We fix $n \in \N$ large enough so that $\lambda_\mu \geq 5/n$ (recall that the Lyapunov exponent $\lambda_\mu$ is  positive, a well-known result of Furstenberg), fix $\varepsilon_1$, $\varphi$ and $\omega$ as in the statement. We will estimate the left-hand side of \eqref{eq.renewal+LDP} separately for $\sigma_\chi(g,\z )$ inside three different intervals $[(\lambda_\mu-\eps_1)n,\infty)$, $[\eps_1n,(\lambda_\mu-\eps_1)n]$ and $(-\infty,\eps_1n]$. The second interval will give us the main term, other intervals will yield the error term. Take a smooth cutoff $\chi$ which equals $1$ on $[\eps_1n,(\lambda_\mu -\eps_1)n]$ and equals $0$ outside of $[\eps_1n-1,(\lambda_\mu -\eps_1)n+1]$ so that we have $\mathds{1}-\chi\leq\mathds{1}_{s<\eps_1n}+\mathds{1}_{s>(\lambda_\mu-\eps_1)n} $. Then, we can write
\begin{equation}\label{eq:split}
\begin{split}
&\left|\frac{1}{n}\sum_{k=1}^n\int \varphi(g\z ,\sigma_\chi(g,\z ))\ d\mu^{*k}(g)-\frac{1}{n}\sum_{k=1}^n\int \varphi(g\z ,\sigma_\chi(g,\z ))\chi(\sigma_\chi(g,\z ))\ d\mu^{*k}(g)\right|\\
\leq & \frac{1}{n}\left|\sum_{k=1}^n\int \varphi(g\z ,\sigma_\chi(g,\z ))\mathds{1}_{\sigma_\chi(g,\z )<\eps_1n}\ d\mu^{*k}(g)\right|\\
&+\frac{1}{n}\left|\sum_{k=1}^n\int \varphi(g\z ,\sigma_\chi(g,\z ))\mathds{1}_{\sigma_\chi(g,\z )>(\lambda_\mu-\eps_1)n}\ d\mu^{*k}(g)\right|.
\end{split}
\end{equation}
	
\textbf{Main term: }
Let $t=n\lambda_\mu$ and $f(\z ,s)=\varphi(\z ,s+t)\chi(s+t)$. Then by \eqref{equ_renewal} \begin{equation}\label{eq:main-renewal}
\begin{split}
&\frac{1}{n}\sum_{k=1}^\infty\int \varphi(g\z ,\sigma_\chi(g,\z ))\chi(\sigma_\chi(g,\z ))\ d\mu^{*k}(g)\\
=&\frac{1}{n}\sum_{k=1}^\infty\int f(g\z ,\sigma_\chi(g,\z )-t)\mu^{*k}(g)\\
=&\frac{1}{t}\int_{\bbS}\int_{-t}^\infty f\ dLeb\ d\nu_\z +\frac{1}{n}O\big(e^{-\eta(t-|\Supp f|)}|f|_{W^{1,2}C^\gamma}\big),
\end{split}
\end{equation}
where in the error term, we have $t-|\Supp f|=t-(t-\eps_1 n)=\eps_1n$. For the main term, using the formula of $f$, we have
\begin{align*}
&\frac{1}{n\lambda_\mu}\int_{\bbS}\int_0^\infty \varphi(y,s)\chi(s)\ dLeb(s)d\nu_\z(y)\\
=&\frac{1}{n\lambda_\mu}\int_{\bbS}\int_{\eps_1n}^{(\lambda_\mu-\eps_1)n} \varphi(y,s)\chi(s)\ dLeb(s)d\nu_\z(y) +|\varphi|_\infty\frac{2}{n\lambda_\mu}\\
=&\frac{1}{t}\int_{\bbS}\int_0^{t}\varphi(y,s)\ dLeb(s)\ d\nu_\z (y)+|\varphi|_\infty O(\eps_1+\frac{1}{n}).
\end{align*}
For the error term in \eqref{eq:main-renewal}, we have
\[ \frac{1}{n}|f|_{W^{1,2}C^\gamma}\leq \sup_s\{|\varphi|_{C^\gamma}\}+\sup_s\{ |\partial_{ss}\varphi|_{C^\gamma}\}\leq |\varphi|_{C^3}.\]
	
Now, we give an upper bound of the sum over  ${k> n}$:
\begin{align*} &\frac{1}{n}\sum_{k>n}^\infty\int \varphi(g\z ,\sigma_\chi(g,\z ))\chi(\sigma_\chi(g,\z ))d\mu^{*k}(g) \\
\leq & |\varphi|_\infty \frac{1}{n}\sum_{k>n}^\infty\mu^{*k}(\{g,\ \sigma_\chi(g,\z )<(\lambda_\mu-\eps_1)n+1\}).
\end{align*}
Due to the assumption $\eps_1 n\geq 2$, we obtain
\[\sigma_\chi(g,\z )-\lambda_\mu n\leq -\eps_1n+1\leq -\eps_1n/2. \]
We use the large deviation estimate (Theorem \ref{thm:LDP}) to obtain
\begin{equation*}
\frac{1}{n}\sum_{k>n}^\infty\mu^{*k}(\{g : \sigma_\chi(g,\z )<(\lambda_\mu-\eps_1)n+1\})\leq \frac{1}{n}\sum_{k>n}Ce^{-ck}=\frac{Ce^{-cn}}{n(1-e^{-c})},
\end{equation*}
where the constants $C,c$ depend on $\eps_1$. 
	
Collecting above estimates, we obtain
\begin{equation}\label{eq:second}
\begin{split}
\frac{1}{n}\sum_{k=1}^n\int \varphi(g\z ,\sigma_\chi(g,\z ))\chi(\sigma_\chi(g,\z ))d\mu^{*k}(g)=&\frac{1}{t}\int_{\bbS}\int_0^t \varphi(y,s)\ dLeb(s)\ d\nu_\z (y)\\
&\!\!\!\!\!\!\!\!\!\!\!\!\!\!\!\!\!\!\!\!\! +O\left(e^{-\eta\eps_1 n}|\varphi|_{C^3}+|\varphi|_\infty  \left(\eps_1+\frac{1}{n}+\frac{Ce^{-cn}}{n(1-e^{-c})}\right)\right).
\end{split}
\end{equation}

\textbf{Error term I: }
For $k<n_0:=\frac{\lambda_\mu-\eps_1}{\lambda_\mu+\eps_1}n$, we have $k(\lambda_\mu+\eps_1)<(\lambda_\mu-\eps_1)n$.
By the large deviation estimates (Theorem \ref{thm:LDP}), we have
\begin{align*}
&\frac{1}{n}\sum_{k=1}^{n_0}\int \varphi(g\z ,\sigma_\chi(g,\z ))\mathds{1}_{\sigma_\chi(g,\z )\geq (\lambda_\mu-\eps_1)n}d\mu^{*k}(g)\\
\leq&  |\varphi|_\infty \frac{1}{n}\sum_{k=1}^{n_0}\mu^{*k}(\{g\in G : \sigma_\chi(g,\z )>(\lambda_\mu+\eps_1)k \})\\
\leq & |\varphi|_\infty C\frac{1}{n}\sum_{k=1}^{n_0} e^{-ck}\leq \frac{|\varphi|_\infty C}{n(1-e^{-c})}.
\end{align*}

For the part $n_0\leq k\leq n$, we use the absolute value to bound
\[\frac{1}{n}\sum_{k=n_0}^{n}\int \varphi(g\z ,\sigma_\chi(g,\z ))\mathds{1}_{\sigma_\chi(g,\z )\geq (\lambda_\mu-\eps_1)n}d\mu^{*k}(g)\leq |\varphi|_\infty\frac{n-n_0}{n}=|\varphi|_\infty\frac{2\eps_1}{\lambda_\mu+\eps_1}. \]
Thus, we have
\begin{equation}\label{eq:first}
\frac{1}{n}\sum_{k=1}^{n}\int \varphi(g\z ,\sigma_\chi(g,\z ))\mathds{1}_{\sigma_\chi(g,\z )\geq (\lambda_\mu-\eps_1)n}d\mu^{*k}(g)\leq  \frac{|\varphi|_\infty C}{n(1-e^{-c})}+|\varphi|_\infty\frac{2\eps_1}{\lambda_\mu+\eps_1}.
\end{equation}
	
	
\textbf{Error term II: }If $k>n_1:=\eps_1n/(\lambda_\mu-\eps_1)$, then we have $\eps_1n<k(\lambda_\mu-\eps_1)$ and hence we can apply the large deviation estimate to obtain
\begin{align*}
&\frac{1}{n}\sum_{k=n_1}^n\int \varphi(g\z ,\sigma_\chi(g,\z ))\mathds{1}_{\sigma_\chi(g,\z )\leq \eps_1 n}d\mu^{*k}(g)\\
\leq & |\varphi|_\infty \frac{1}{n}\sum_{k=n_1}^n\mu^{*k}(\{g\in G : \sigma_\chi(g,\z )<(\lambda_\mu-\eps_1)k \})\leq \frac{C|\varphi|_\infty e^{-cn_1}}{n(1-e^{-c})}.
\end{align*}
For the part $k\leq n_1$, 
\begin{align*}
\frac{1}{n}\sum_{k=1}^{n_1}\int \varphi(g\z ,\sigma_\chi(g,\z ))\mathds{1}_{\sigma_\chi(g,\z )\leq \eps_1 n}d\mu^{*k}(g)\leq|\varphi|_\infty\frac{n_1}{n}=|\varphi|_\infty\frac{\eps_1}{\lambda_\mu-\eps_1}.
\end{align*}
Thus, we have
\begin{equation}\label{eq:thrid}
\frac{1}{n}\sum_{k=1}^{n}\int \varphi(g\z ,\sigma_\chi(g,\z ))\mathds{1}_{\sigma_\chi(g,\z )\leq \eps_1 n}d\mu^{*k}(g)\leq|\varphi|_\infty\frac{\eps_1}{\lambda_\mu-\eps_1}+\frac{C|\varphi|_\infty e^{-cn_1}}{n(1-e^{-c})}.
\end{equation}
Finally, combining \eqref{eq:split}, \eqref{eq:second}, \eqref{eq:first} and \eqref{eq:thrid}, we obtain
\begin{align*}
\frac{1}{n}\sum_{k=1}^\infty\int \varphi(g\z ,\sigma_\chi(g,\z ))\ d\mu^{*k}(g)=\frac{1}{t}\int_0^t \varphi(y,s)\ dLeb(s)\ d\nu_\z (y)\\
+O\left(e^{-\eta\eps_1 n}|\varphi|_{C^3}+|\varphi|_\infty\frac{\eps_1}{\lambda_\mu-\eps_1}+\frac{C|\varphi|_\infty}{n(1-e^{-c})}\right).
\end{align*}
\end{proof}

\subsubsection{Equidistribution on $X_\calC$}
We now use the equidistribution on $K\times \R$ (Proposition \ref{prop.equidistribution}) to deduce the equidistribution on $X_\calC$, that is, to give

\begin{proof}[Proof of Theorem \ref{thm.equidist.geod.sl}]
Let $K\times_\sigma D$ be the fiber bundle with $H$ action, the action of $H$ is given by $h(k,d)=(hk,\sigma(h,k)d)$, where we identify $K \simeq \mathbb{S}^1 \simeq H/AN$. We define a map $p$ from $K\times_\sigma D$ to $H/Q_H\times_\alpha D^\pm$ by 
\[ p(k,d)=(kM,\sg(k)d), \]
where $\sg(k)$ is the sign element in $M$. By Lemma \ref{lem:sign}, we have
\begin{lemma}\label{lem.hequivariant}
The map $p$ is an $H$-equivariant map from $K\times_\sigma D$ to $H/Q_H\times_\alpha D^\pm$.
\end{lemma}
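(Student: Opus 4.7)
The plan is to verify equivariance by direct unwinding of both actions and then applying Lemma \ref{lem:sign}. Concretely, fix $h \in H$ and $(k,d) \in K \times_\sigma D$, and write $hk = k_h \, \sigma(h,kM) \, n$ with $k_h \in K$ and $n \in N$ for the Iwasawa decomposition of $hk$. Then by definition of the $H$-action on $K \times_\sigma D$ we have $h \cdot (k,d) = (k_h, \sigma(h,kM)\, d)$, so that
\[
p\bigl(h\cdot(k,d)\bigr) \;=\; \bigl(k_h M,\; \sg(k_h)\,\sigma(h,kM)\,d\bigr).
\]
On the other hand, the $H$-action on $H/Q_H \times_\alpha D^\pm$ gives
\[
h \cdot p(k,d) \;=\; h\cdot(kM,\sg(k)d) \;=\; \bigl(k_h M,\; \alpha(h,kM)\,\sg(k)\,d\bigr),
\]
where we used that $h$ acts on $H/Q_H \simeq K/M$ by $kM \mapsto k_h M$.

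The matching of the first coordinates is automatic. For the second coordinates, I would invoke Lemma \ref{lem:sign}, which identifies $\alpha(h,kM)$ with the pair $(\sigma(h,kM), \sg(h,kM)) \in D \times M = D^\pm$ and which gives the explicit formula $\sg(h,kM) = \sg(k)\sg(k_h)$. Substituting this in, the right-hand coordinate becomes
\[
\alpha(h,kM)\,\sg(k)\,d \;=\; \sigma(h,kM)\,\sg(k)\,\sg(k_h)\,\sg(k)\,d.
\]
Two algebraic facts then close the computation: the subgroup $M \simeq \Z/2\Z$ is central in $D^\pm = D \times M$ (so the sign factors commute with $\sigma(h,kM) \in D$), and $\sg(k)^2 = \id$. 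Together these yield $\sigma(h,kM)\,\sg(k_h)\,d = \sg(k_h)\,\sigma(h,kM)\,d$, matching the second coordinate of $p(h\cdot(k,d))$.

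I do not anticipate a genuine obstacle here: the statement is a bookkeeping verification once the sign-decomposition of $\alpha$ from Lemma \ref{lem:sign} is in hand. The only point that needs a moment of care is the commutativity between the $M$-component and the $D$-component inside $D^\pm$, which is exactly the feature that makes the definition $p(k,d)=(kM,\sg(k)d)$ compatible with both cocycles. Measurability of $p$ follows from the fact that $\sg: K \to M$ is Borel by construction of the section $s: K/M \to K$ in \S\ref{subsub.iwasawa.sign}.
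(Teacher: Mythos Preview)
Your proof is correct and is precisely the verification the paper has in mind: the paper simply states that the lemma follows from Lemma~\ref{lem:sign} without writing out the computation, and your argument is exactly the direct unwinding of that reference via the formula $\alpha(h,kM)=(\sigma(h,kM),\sg(h,kM))$ with $\sg(h,kM)=\sg(k)\sg(k_h)$ and the centrality of $M$ in $D^\pm$.
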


We denote by $\calG(r,\sg(w))$ the element $(e^r,\sg(w))\in D^\pm$ and we use additive parameter $r\in \R$. Under this parametrization, the equidistribution of $D$ or $D^\pm$-orbits of $z \in S/\Lambda$ to some measure $m$ means, respectively, that the measure $\frac{1}{t}\int_0^t\delta_{\calG(r,1)z}\ dr$ or the measure $\frac{1}{2t}\int_0^t\delta_{\calG(r,1)z}+\delta_{\calG(r,-1)z}\ dr$ converges to $m$ as $t \to \infty$.

Let $\psi$ be $C^3$ function on $X_\mathcal{C} \simeq H/Q_H \times S /\Lambda$ and $z \in S/\Lambda$. Set $\varphi(w,r):=\psi(\eta,\calG(r,\sg(w))z)$ where $\eta$ is the projection of $w$ on $H/Q_H \simeq K/M$. Thanks to Lemma \ref{lem.hequivariant}, we have the relation 
\begin{equation}
\varphi(g s(\eta),\sigma_\chi(g,\eta))=\psi(g\eta,\alpha(g,\eta)z)=\psi(gx),
\end{equation}
for $x=(\eta,z) \in X_\mathcal{C}$ and where, we recall, $s:K/M \to K$ is the section. Therefore, we have
\begin{equation}\label{eq.corresp.phi.psi}
\frac{1}{n}\sum_{1\leq k\leq n}\mu^{*k}*\delta_x(\psi)=\frac{1}{n}\sum_{k=1}^n\int \varphi(g s(\eta) ,\sigma_\chi(g,\eta))\ d\mu^{*k}(g).
\end{equation}

We know that for any $\psi\in C^3(X_\calC)$ with bounded $C^3$ norm, with suitable choice of $\eps_1$ depending on $n$, thanks to Proposition \ref{prop.equidistribution} and the relation \eqref{eq.corresp.phi.psi}, for $t=\lambda_\mu n$, we have
\[ \frac{1}{n}\sum_{1\leq k\leq n}\mu^{*k}*\delta_x(\psi)- \frac{1}{t}\int_{\bbS}\int_0^t\varphi(k',r)\ dr\ d\nu_{s(\eta)}(k')\rightarrow 0 \]
as $n$, equivalently $t$, tends to $\infty$.

On the other hand, by construction of the function $\varphi(\cdot,\cdot)$ and the measures $\nu_w$ for $w \in \mathbb{S}^1$, we have
\begin{align*}
\int_{\mathbb{S}^1} \varphi(k',r)\ d\nu_w(k')=& p_1(w)\int_{\bbS} \psi(k'M,\calG(r,\sg(k'))z)\ d\nu_1(k')\\
&+p_2(w)\int_{\mathbb{S}^1} \psi(k'M,\calG(r,\sg(k'))z) d\nu_2(k')\\
=&p_1(w)\int \psi(\eta,\calG(r,1)z)\ d\overline{\nu}_F(\eta)+p_2(w)\int \psi(\eta,\calG(r,-1)z)\ d\overline{\nu}_F(\eta).
\end{align*}
We get that
\begin{equation*}
    \frac{1}{n}\sum_{1\leq k\leq n}\mu^{*k}*\delta_x(\psi)- \frac{1}{t}\int \int_0^t p_1(s(\eta))\psi(\eta,\calG(r,1)z)+p_2(s(\eta))\psi(\eta,\calG(r,-1)z)\ dr\ d \overline{\nu}_F(\eta)\rightarrow 0.
 \end{equation*}
 
Recall that for Case 2.2.a, the section $s$ is chosen so that its image contains the support of $\nu_1$. In particular, $p_1(s(\eta))=1$ and $p_2(s(\eta))=0$ for every $\eta$ in the support of the Furstenberg measure $\overline{\nu}_F$. Therefore, in Case 2.2.a, if the $\eta$ coordinate of $x=(\eta,z)$ belongs to the support of $\overline{\nu}_F$, then we have
\begin{equation}\label{eq.conv1}
    \frac{1}{n}\sum_{1\leq k\leq n}\mu^{*k}*\delta_x(\psi)- \frac{1}{t}\int \int_0^t \psi(\eta,\calG(r,1)z)\ dr\ d\bar\nu_F(\eta)\rightarrow 0.
 \end{equation}
In Case 2.2.b, by a similar computation and using the fact that the unique measure $\nu_K$ on $\mathbb{S}^1$ writes as $\nu_K=\frac{1}{2} \int  (\delta_w + \delta_{-w}) d\overline{\nu}_F(\R w)$, we have
\begin{equation}\label{eq.conv2}
    \frac{1}{n}\sum_{1\leq k\leq n}\mu^{*k}*\delta_x(\psi)- \frac{1}{2t}\int \int_0^t (\psi(\eta,\calG(r,1)z)+\psi(\eta,\calG(r,-1)z)) \ dr\ d\overline{\nu}_F(\eta)\rightarrow 0.
 \end{equation}
By density of $C^3(X_\calC)$ in $C(X_\calC)$, we deduce from \eqref{eq.conv1} and \eqref{eq.conv2} that $ \frac{1}{n}\sum_{1\leq k\leq n}\mu^{*k}*\delta_x$ converges weakly to a measure $\bar\nu_F\otimes m$ if and only if the $D$-orbit or the $D^{\pm}$- orbit (respectively in Case 2.2.a or Case 2.2.b) starting at $z \in S/\Lambda$ equidistributes to the measure $m$.
\end{proof}
 
\begin{proof}[Proof of Theorem \ref{thm.equidist.geod}] 
For $\Gamma_\mu< \PSL_2(\R)\simeq \PGL_2(\R)^o$, we take the lift $\mu_1$ on $\SL_2(\R)$ of $\mu$ on $\PSL_2(\R)$ with equal probability on two preimages of each element. Then we can apply Theorem \ref{thm.renewal} to this new measure $\mu_1$. Here the element $\diag(-1,-1)$ (which is the non-trivial element of $M$ in the case of $H \simeq \SL_2(\R)$) maps to identity in $G$, which acts trivially. Then the same argument as in the proof of Theorem \ref{thm.renewal} readily yields Theorem \ref{thm.equidist.geod}.
\end{proof}



\end{document}